\declaretheorem{theorem}
\declaretheorem{corollary}
\declaretheorem{lemma}
\declaretheorem{proposition}
\declaretheorem{observation}
\declaretheoremstyle[qed=$\square$]{definitionwithend}
\declaretheorem[style=definitionwithend]{definition}
\declaretheorem[style=definitionwithend]{assumption}
\declaretheorem[style=definitionwithend]{example}
\declaretheorem[style=definitionwithend]{remark}
\crefname{fact}{Fact}{Facts}
\crefname{algorithm}{Algorithm}{Algorithms}
\crefname{assumption}{Assumption}{Assumptions}
\definecolor{gold}{rgb}{0.85,0.65,0}
\newcommand{\ceil}[1]{\ensuremath{\left\lceil #1 \right\rceil}}
\newcommand{\abs}[1]{\ensuremath{\left\lvert #1 \right\rvert}}
\newcommand{\by}{\times}
\newcommand{\norm}[1]{\ensuremath{\left\lVert #1 \right\rVert}}
\newcommand{\ip}[1]{\ensuremath{\left\langle #1 \right\rangle}}
\newcommand{\grad}{\ensuremath{\nabla}}
\newcommand{\set}[1]{\left\{#1\right\}}
\def\R{{\mathbb{R}}}
\def\S{{\mathbb{S}}}
\def\cI{{\cal I}}
\DeclareMathOperator{\Opt}{Opt}
\DeclareMathOperator*{\argmax}{arg\,max}
\DeclareMathOperator{\spann}{span}
\DeclareMathOperator{\inter}{int}
\LetLtxMacro\orgvdots\vdots
\LetLtxMacro\orgddots\ddots
\DeclareRobustCommand\vdots{\mathpalette\@vdots{}}
\newcommand*{\@vdots}[2]{\sbox0{$#1\cdotp\cdotp\cdotp\m@th$}\sbox2{$#1.\m@th$}\vbox{\dimen@=\wd0 \advance\dimen@ -3\ht2 \kern.5\dimen@
\dimen@=\wd2 \advance\dimen@ -\ht2 \dimen2=\wd0 \advance\dimen2 -\dimen@
    \vbox to \dimen2{\offinterlineskip
      \copy2 \vfill\copy2 \vfill\copy2 }}}
\DeclareRobustCommand\ddots{\mathinner{\mathpalette\@ddots{}\mkern\thinmuskip
  }}
\newcommand*{\@ddots}[2]{\sbox0{$#1\cdotp\cdotp\cdotp\m@th$}\sbox2{$#1.\m@th$}\vbox{\dimen@=\wd0 \advance\dimen@ -3\ht2 \kern.5\dimen@
\dimen@=\wd2 \advance\dimen@ -\ht2 \dimen2=\wd0 \advance\dimen2 -\dimen@
    \vbox to \dimen2{\offinterlineskip
      \hbox{$#1\mathpunct{.}\m@th$}\vfill
      \hbox{$#1\mathpunct{\kern\wd2}\mathpunct{.}\m@th$}\vfill
      \hbox{$#1\mathpunct{\kern\wd2}\mathpunct{\kern\wd2}\mathpunct{.}\m@th$}}}}
\DeclareRobustCommand\bddots{\mathinner{\mathpalette\@bddots{}\mkern\thinmuskip
  }}
\newcommand*{\@bddots}[2]{\sbox0{$#1\cdotp\cdotp\cdotp\m@th$}\sbox2{$#1.\m@th$}\vbox{\dimen@=\wd0 \advance\dimen@ -3\ht2 \kern.5\dimen@
\dimen@=\wd2 \advance\dimen@ -\ht2 \dimen2=\wd0 \advance\dimen2 -\dimen@
    \vbox to \dimen2{\offinterlineskip
      \hbox{$#1\mathpunct{\kern\wd2}\mathpunct{\kern\wd2}\mathpunct{.}\m@th$}\vfill
      \hbox{$#1\mathpunct{\kern\wd2}\mathpunct{.}\m@th$}\vfill
      \hbox{$#1\mathpunct{.}\m@th$}}}}
\newcommand{\ije}[2]{#2}
\begin{document}

\title{Implicit regularity and linear convergence rates for the generalized trust-region subproblem}
\author[1]{Alex L.\ Wang}
\author[2]{Yunlei Lu}
\author[1]{Fatma K{\i}l{\i}n\c{c}-Karzan}
\affil[1]{Carnegie Mellon University, Pittsburgh, PA, 15213, USA.}
\affil[2]{Peking University, Beijing, China, 100871}

\date{\today}

\maketitle
\begin{abstract}
In this paper we develop efficient first-order algorithms for the \emph{generalized trust-region subproblem} (GTRS), which has applications in signal processing, compressed sensing, and engineering.
Although the GTRS, as stated, is nonlinear and nonconvex, it is well-known that objective value exactness holds for its SDP relaxation under a Slater condition.
While polynomial-time SDP-based algorithms exist for the GTRS, their relatively large computational complexity has motivated and spurred the development of custom approaches for solving the GTRS.
In particular, recent work in this direction has developed first-order methods for the GTRS whose running times are linear in the sparsity (the number of nonzero entries) of the input data.
In contrast to these algorithms, in this paper we develop algorithms for computing $\epsilon$-approximate solutions to the GTRS whose running times are linear in both the input sparsity \emph{and} the precision $\log(1/\epsilon)$ whenever a regularity parameter is positive.
We complement our theoretical guarantees with numerical experiments comparing our approach against algorithms from the literature. 
Our numerical experiments highlight that our new algorithms significantly outperform prior state-of-the-art algorithms on sparse large-scale instances. \end{abstract}

\section{Introduction}\label{sec:intro}
In this paper we develop efficient first-order algorithms for the \emph{generalized trust-region subproblem} (GTRS). Recall the GTRS,
\begin{align*}
\Opt\coloneqq \inf_{x\in\R^n}\set{q_0(x):\, q_1(x)\leq 0},
\end{align*}
where $q_0(x)$ and $q_1(x)$ are quadratic functions in $x\in\R^n$. We will assume that for each $i\in\set{0,1}$, the quadratic function $q_i(x)$ is given by $q_i(x) = x^\top A_i x + 2b_i^\top x +c_i$ for $A_i\in\S^n$, $b_i\in\R^n$ and $c_i\in\R$.

This problem generalizes the classical \emph{trust-region subproblem} (TRS) where the general quadratic constraint $q_1(x)\leq 0$ is replaced with the unit ball constraint $\norm{x}^2\leq 1$.
The TRS finds applications, for example, in robust optimization~\cite{benTal2014hidden,hoNguyen2017second} and combinatorial optimization~\cite{pardalos1991algorithms,karmarkar1991interior}.
The TRS is additionally foundational in the area of nonlinear programming. Indeed, iterative algorithms based on the TRS (known sometimes as trust-region methods)~\cite{conn2000trust} are among the most empirically successful techniques for general nonlinear programs.

Generalizing the TRS, the GTRS has applications in signal processing, compressed sensing, and engineering (see~\cite{wang2020generalized} and references therein). The problem of minimizing a quartic of the form $q(x,p(x))$, where $q:\R^{n+1}\to\R$ and $p:\R^n\to\R$ are both quadratic, can be cast in the equality-constrained variant of the GTRS. This approach has been used to address source localization~\cite{hmam2010quadratic} as well as the double-well potential functions~\cite{feng2012duality}.
More broadly, iterative ADMM-based algorithms for general QCQPs using the GTRS as a subprocedure have shown exceptional numerical performance~\cite{huang2016consensus} and outperform previous state-of-the-art approaches on a number of real world problems (e.g., multicast beamforming and phase retrieval). This application of the GTRS as a subprocedure within an iterative solver parallels the use of the TRS within trust-region methods.

Although the GTRS, as stated, is nonlinear and nonconvex, it is well-known that objective value exactness holds for its SDP relaxation under a Slater condition~\cite{fradkov1979s-procedure,polik2007survey}. Thus, unlike general QCQPs which are NP-hard, the GTRS can be solved in polynomial time via SDP-based algorithms. Nevertheless, the relatively large computational complexity of SDP-based approaches has motivated and spurred the development of alternative custom approaches for solving the GTRS.
We restrict our discussion below to \emph{recent} trends in GTRS algorithms and discuss \emph{earlier} work~\cite{more1993generalizations,more1983computing,stern1995indefinite} where appropriate in the main body.

One line of proposed algorithms for the GTRS assumes simultaneous diagonalizability (SD) of $A_0$ and $A_1$.
It is well-known that SD holds under minor conditions---for example, if there exists a positive definite matrix in $\spann\set{A_0,A_1}$ (see~\cite{wang2021new} for additional variants of this result).
Ben-Tal and Teboulle~\cite{benTal1996hidden} exploit the SD condition to provide a reformulation of the interval-constrained GTRS as a convex minimization problem with linear constraints. More recently, under the SD condition, Ben-Tal and den Hertog~\cite{benTal2014hidden} provide a second-order cone program~(SOCP) reformulation of the GTRS in a lifted space. This SOCP reformulation was generalized beyond the GTRS in \cite{locatelli2015some}. 
Under the SD condition, a number of papers~\cite{salahi2016trust,fallahi2018minimizing} exploit the resulting problem structure of the primal or the dual formulation to derive solution procedures for the GTRS and interval-constrained GTRS.
Generalizing~\cite{benTal2014hidden},
Jiang et al.~\cite{jiang2018socp} provide an SOCP reformulation for the GTRS in a lifted space whenever the problem has a finite optimal value even when the SD condition fails.
Unfortunately, the algorithms in this line often assume implicitly that $A_0$ and $A_1$ are already diagonal or that a simultaneously-diagonalizing basis can be computed. In practice, however, computing such a basis requires a full eigen-decomposition and can be prohibitively expensive for large-scale instances.

A second line of research on the GTRS explores the connection between the GTRS and generalized eigenvalues of the matrix pencil $A_0 + \gamma A_1$.
Pong and Wolkowicz~\cite{PongWolkowicz2014} propose a generalized-eigenvalue-based algorithm which exploits the structure of optimal GTRS solutions, albeit without an explicit running time analysis.
Adachi and Nakatsukasa~\cite{adachi2019eigenvalue} present another approach for solving the GTRS based on computing the minimum generalized eigenvalue (and corresponding eigenvector) of an associated \emph{indefinite} $(2n+1)\times (2n+1)$ matrix pencil.
Unfortunately, this approach suffers from the significant cost of  computing a minimum generalized eigenvalue of an indefinite matrix pencil.
Empirically, the complexity of this approach scales as $O(n^2)$ even for sparse instances of the GTRS with $O(n)$ nonzero entries in $A_0$ and $A_1$ (see \cite[Section 4]{adachi2019eigenvalue}).
Jiang and Li~\cite{jiang2019novel} reformulate the GTRS as the problem of minimizing the maximum of two convex  quadratic functions in the original space.
This reformulation is constructed from a pair of generalized eigenvalues related to the matrix pencil $A_0 + \gamma A_1$. They then suggest a saddle-point-based first-order algorithm to solve this reformulation within an $\epsilon$ additive error in $O(1/\epsilon)$ time. 
These approaches are based on the assumption that the generalized eigenvalues are given or can be computed exactly, and offer no theoretical guarantees when only approximate generalized eigenvalue computations are available (as is the case in practice; see also the discussion in Section 4.1 in \cite{jiang2020linear}). 
Despite this, the numerical experiments in~\cite{jiang2019novel,adachi2019eigenvalue,PongWolkowicz2014} suggest that algorithms motivated by these ideas perform well even using only approximate generalized eigenvalue computations.

In contrast to these papers, recent work~\cite{jiang2020linear,wang2020generalized} offers provably linear-time (in terms of the number of nonzero entries in the input data) algorithms for the GTRS using only approximate eigenvalue procedures.
Jiang and Li~\cite{jiang2020linear} extend ideas developed in~\cite{hazan2016linear} for solving the TRS to derive an algorithm for solving the GTRS up to an $\epsilon$ additive error with high probability. This approach differs from the earlier literature in that it does not rely on the computation of a simultaneously-diagonalizing basis or exact generalized eigenvalues. The complexity of this approach is
\begin{align*}
\tilde O\left(\frac{N}{\sqrt{\epsilon}}\log\left(\frac{n}{p}\right)\log\left(\frac{1}{\epsilon}\right)^2\right),
\end{align*}
where $N$ is the number of nonzero entries in $A_0$ and $A_1$, $\epsilon$ is the additive error, and $p$ is the failure probability.
Here, we have elided quantities related to the condition number of the GTRS.
Wang and K{\i}l{\i}n\c{c}-Karzan~\cite{wang2020generalized} reexamine the convex quadratic reformulation idea of \cite{jiang2019novel} and show formally that by approximating the generalized eigenvalues sufficiently well, the perturbed convex reformulation is within a small additive error of the true convex reformulation. Moreover, they establish that the resulting convex reformulation can be solved via Nesterov's accelerated gradient descent method~\cite[Section 2.3.3]{nesterov2018lectures} for smooth minimax problems to achieve an overall run time guarantee of
\begin{align*}
\tilde O\left(\frac{N}{\sqrt{\epsilon}}\log\left(\frac{n}{p}\right)
\log\left(\frac{1}{\epsilon}\right)\right).
\end{align*}

A parallel line of work~\cite{hoNguyen2017second,fortin2004trust,gould1999solving,more1983computing,hazan2016linear,carmon2018analysis} has developed custom first-order methods for the trust-region subproblem.
Most relatedly, Carmon and Duchi~\cite{carmon2018analysis} recently showed that a Krylov-based first-order method
can achieve a convergence rate for the TRS that is linear in both $N$ \emph{and} the precision $\log(1/\epsilon)$ whenever a regularity parameter, $\mu^*$, is positive. This contrasts with previous algorithms for the TRS whose guarantees scaled as $\approx 1/\sqrt{\epsilon}$.

In this paper, we introduce and analyze a \emph{new} algorithm for computing an $\epsilon$-approximate solution to the GTRS whose running time is linear in both $N$ and the precision $\log(1/\epsilon)$ whenever $\mu^*$ is positive.
To be concrete, an $\epsilon$-approximate solution is defined below.
\begin{definition}
We say $x\in\R^n$ is an \emph{$\epsilon$-approximate solution} to \eqref{eq:gtrs} if
\begin{gather*}
q_0(x)\leq \Opt + \epsilon\quad\text{and}\quad q_1(x) \leq \epsilon.\qedhere
\end{gather*}
\end{definition}
Despite similar convergence guarantees, our approach for solving the GTRS does not share many algorithmic similarities with the approach of Carmon and Duchi~\cite{carmon2018analysis} for the TRS.

\subsection{Overview and outline of paper}

A summary of our contributions, along with an outline of the remainder of the paper, is as follows:
\begin{itemize}
	\item In \cref{sec:regularity}, we recall definitions and results related to the Lagrangian dual of the GTRS and define our notion of regularity. Specifically, we recall definitions and results in the literature~\cite{more1993generalizations,more1983computing,adachi2019eigenvalue,feng2012duality} regarding the dual function $\mathbf{d}(\gamma)$ and its derivative $\nu(\gamma)$. We then define a \emph{regularity} parameter $\mu^*$, which will play the role of strong convexity in our algorithms. We close with a key lemma (\cref{lem:gtrs_stability}) that underpins the algorithms developed in this paper. Intuitively, \cref{lem:gtrs_stability} says that when $\mu^*$ is positive, the unique optimizer of the GTRS is stable---an $\Omega(\mu^*)$-strongly convex reformulation of the GTRS, whose unique optimizer coincides with the GTRS optimizer, can be built using \emph{inexact} estimates of the dual optimizer $\gamma^*$.
\item In \cref{sec:constructing_sc_reform}, we describe and analyze an approach for computing an $\epsilon$-approximate optimizer of a nonconvex-nonconvex GTRS instance based on \cref{lem:gtrs_stability}. 
	Our approach consists of two algorithms, \texttt{ConstructReform} and \texttt{SolveRegular}.
	The first algorithm uses inexact estimates of $\nu(\gamma)$ to binary search for an inexact estimate of $\gamma^*$. \texttt{ConstructReform} will either return an \emph{exact} $\Omega(\mu^*)$-strongly convex reformulation of the GTRS or an $\epsilon$-approximate optimizer of the GTRS. In the former case, we may then apply \texttt{SolveRegular} to compute an $\epsilon$-approximate optimizer.
	In the latter case, \texttt{ConstructReform} will additionally \emph{attempt} to certify that $\mu^*= O(\epsilon)$ so that building an $\Omega(\mu^*)$-strongly convex reformulation may be undesirable.
Together, these two algorithms achieve the following linear convergence rate (i.e., scaling as $\log(1/\epsilon)$) for the GTRS:
	\begin{align*}
	\tilde O\left(\frac{N}{\sqrt{\phi}}\log\left(\frac{1}{\phi}\right)\log\left(\frac{n}{p}\right)\log\left(\frac{1}{\epsilon}\right)\right).
	\end{align*}
	Here, $N$ is the number of nonzero entries in $A_0$ and $A_1$ combined,
$\phi$ can be thought of as $\approx \max(\mu^*,\epsilon)$
	(see \cref{sec:constructing_sc_reform} for a formal definition), $p$ is the failure probability, and the $\tilde O$-notation hides $\log\log$-factors.
	This contrasts with previous algorithms~\cite{wang2020generalized,jiang2020linear} that are described as ``linear-time'', referring to the fact that their running times scale linearly in only $N$.
We close this section by examining in further detail the case where \texttt{ConstructReform} returns an $\epsilon$-approximate optimizer but fails to certify that $\mu^* = O(\epsilon)$. Specifically, we show that this edge case can only happen if $\nu(\gamma)$ is ``extremely flat,'' which in turn can only happen if a certain \emph{coherence} parameter is small.
\item In \cref{sec:numerical}, we present numerical experiments comparing the algorithms of \cref{sec:constructing_sc_reform} to other algorithms proposed in the recent literature~\cite{benTal2014hidden,jiang2019novel,adachi2019eigenvalue}. Our numerical experiments corroborate our theoretical understanding of the situation---the algorithms in this paper significantly outperform prior state-of-the-art algorithms on sparse large-scale GTRS instances.
\end{itemize}

\subsection{Notation}
For $x\in\R$ and $y\geq 0$ let 
$[\pm y]\coloneqq [-y, +y]$ and $[x\pm y] \coloneqq [x-y, x+y]$.
We denote the $i$-th unit vector in $\R^n$ by $e_i$. 
Let $\S^n$ denote the $n\by n$ real symmetric matrices.
For $A\in\S^n$ we will write $A\succeq 0$ (resp.\ $A\succ 0$) to denote that $A$ is positive semidefinite (resp.\ positive definite).
For $\gamma\in\R_+$, define $A(\gamma) \coloneqq A_0 + \gamma A_1$, $b(\gamma)\coloneqq b_0 + \gamma b_1$, and $c(\gamma)\coloneqq c_0 + \gamma c_1$.
Let $q(\gamma,x)\coloneqq q_0(x) + \gamma q_1(x)$.
For $A\in\S^n$, let $\norm{A}$ be its spectral norm.
For $b\in\R^n$, let $\norm{b}$ be its Euclidean norm.
For an interval $\Gamma\subseteq\R$, let $\inter(\Gamma)$ denote its interior.
We will use $\tilde O$-notation to hide $\log\log$-factors in our running times.

\section{Implicit Regularity in the GTRS}
\label{sec:regularity}
Recall that the GTRS is the problem of minimizing a quadratic objective function subject to a single quadratic constraint, i.e.,
\begin{align}
\label{eq:gtrs}
\Opt\coloneqq \inf_{x\in\R^n}\set{q_0(x):\, q_1(x) \leq 0},
\end{align}
where for each $i\in\set{0,1}$, we have $q_i(x) = x^\top A_i x + 2b_i^\top x + c_i$ for some $A_i\in\S^n$, $b_i\in\R^n$, and $c_i\in\R$.

We will make the following \emph{blanket} assumption, which is both natural and common in the literature on the GTRS~\cite{wang2020generalized,adachi2019eigenvalue,jiang2016simultaneous,jiang2020linear}. This assumption can be thought of as primal and dual strict feasibility assumptions or a Slater assumption.
\begin{assumption}
\label{as:definiteness}
There exists $\bar x\in\R^n$ such that $q_1(\bar x)<0$ and there exists $\bar \gamma\geq 0$ such that $A(\bar \gamma)\succ 0$.
\end{assumption}
\begin{remark}
\label{rem:definiteness_trs}
Note, for example, that \cref{as:definiteness} holds in the classical TRS setting where $q_1(x) = x^\top x - 1$. Indeed, $q_1(0) < 0$ and $A(\gamma) = A_0 + \gamma I \succ 0$ for all $\gamma$ large enough.\ije{\hfill\proofbox}{}
\end{remark}

The results and definitions will assume only \cref{as:definiteness}. In particular, they can be applied to both the classical TRS setting as well as the nonconvex-nonconvex GTRS setting of \cref{sec:constructing_sc_reform}.

Let $\Gamma\coloneqq \set{\gamma\in\R_+:\, A(\gamma)\succeq 0}$. This is a closed interval as the positive semidefinite cone is closed.
If $\Gamma$ is bounded, let $[\gamma_-,\gamma_+]$ denote its left and right endpoints. Else, let $\gamma_-$ denote its left endpoint and define $\gamma_+\coloneqq + \infty$.
Note that for any $\gamma\in\Gamma$, $q(\gamma,x)$ is a convex function of $x$.
Furthermore, by the existence of $\bar\gamma\geq 0$ such that $A(\bar\gamma)\succ 0$, we have that $0 \leq \gamma_-<\gamma_+$.

\begin{definition}
Let $\mathbf{d}:\R_+\to\set{-\infty}\cup\R$ denote the extended-real-valued function defined by
\begin{align*}
\mathbf{d}(\gamma)&\coloneqq \inf_{x\in\R^n} q(\gamma,x).\qedhere
\end{align*}
\end{definition}

We make the following observations on $\mathbf{d}(\gamma)$.
\begin{observation}
\label{obs:dual_properties}
Suppose \cref{as:definiteness} holds. Then,
\begin{itemize}
	\item The function $\mathbf{d}(\gamma)$ is continuous and concave as it is the infimum of affine functions of $\gamma$.
	\item For $\gamma \in\R_+\setminus\Gamma$, the function $q(\gamma, x)$ is nonconvex in $x$ so that $\mathbf{d}(\gamma) = -\infty$.
	\item As $q_1(\bar x)<0$, we have $\mathbf{d}(\gamma) \leq q(\gamma,\bar x) \to -\infty$ as $\gamma\to\infty$.
\end{itemize}
\end{observation}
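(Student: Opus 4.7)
The plan is to verify each of the three bullets independently, since each follows fairly directly from the definition $\mathbf{d}(\gamma) = \inf_{x} q(\gamma,x)$ together with \cref{as:definiteness}.

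For the first bullet, I would observe that $q(\gamma,x) = q_0(x) + \gamma q_1(x)$ is affine in $\gamma$ for each fixed $x \in \R^n$. Consequently, $\mathbf{d}(\gamma)$ is the pointwise infimum of a family of affine functions of $\gamma$, and so it is automatically concave and upper semicontinuous as an extended-real-valued function. Concavity on $\R$ forces continuity on the interior of the effective domain, so the only remaining piece is continuity at the boundary of $\Gamma$.

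For the second bullet, I would exhibit a direction along which $q(\gamma,\cdot)$ is unbounded below whenever $\gamma \in \R_+\setminus\Gamma$. Concretely, such a $\gamma$ yields a matrix $A(\gamma)$ with a strictly negative eigenvalue; taking $v$ to be a corresponding eigenvector and examining the univariate quadratic $t \mapsto q(\gamma,tv)$, its leading coefficient $v^\top A(\gamma) v$ is strictly negative, so the quadratic diverges to $-\infty$. This yields $\mathbf{d}(\gamma) = -\infty$ as claimed.

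For the third bullet, I would simply substitute the point $\bar x$ guaranteed by \cref{as:definiteness} into the defining infimum, obtaining $\mathbf{d}(\gamma) \leq q_0(\bar x) + \gamma q_1(\bar x)$, and the strict inequality $q_1(\bar x) < 0$ drives the right-hand side to $-\infty$ linearly as $\gamma \to \infty$. Of the three items, the only step I expect to require any care is the boundary continuity claim in the first bullet, which I would dispatch via a brief appeal to standard continuity properties of closed concave functions on a closed interval of $\R$; the other two bullets are essentially immediate from the definitions, which is consistent with the result being stated as an \emph{observation} rather than a lemma.
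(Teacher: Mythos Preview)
Your proposal is correct and follows the same reasoning the paper embeds directly in the statement of the observation (the paper offers no separate proof; the three clauses carry their own one-line justifications). If anything, you are more careful than the paper: you rightly flag that ``infimum of affine functions'' yields only upper semicontinuity and concavity, and that continuity at the endpoints of $\Gamma$ is the one place needing an extra remark, whereas the paper treats this as self-evident.
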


We comment on the connection between $\mathbf{d}(\gamma)$, the SDP relaxation of \eqref{eq:gtrs}, and the Lagrangian dual of \eqref{eq:gtrs}.
One consequence of the S-lemma~\cite{fradkov1979s-procedure} is that the GTRS has an exact SDP relaxation. Furthermore, it is well-known that the SDP relaxation of a general quadratically constrained quadratic program is equivalent to its Lagrangian dual~\cite{benTal2001lectures}. We will write this fact in our setting as the following identity (which holds under \cref{as:definiteness}),
\begin{align}
\label{eq:opt_equals_inf_sup}
\Opt = \inf_{x\in\R^n}\sup_{\gamma\in\Gamma} q(\gamma, x).
\end{align}
We provide a short self-contained proof of this fact in~\cref{app:regularity}.
Next, by coercivity~\cite[Proposition VI.2.3]{ekeland1999convex} we have that
\begin{align}
\label{eq:opt_equals_sup_inf}
\Opt = \sup_{\gamma\in\Gamma}\inf_{x\in\R^n}q(\gamma,x) = \sup_{\gamma\in\Gamma}\mathbf{d}(\gamma) = \sup_{\gamma\in\R_+}\mathbf{d}(\gamma).
\end{align}

In words, \eqref{eq:opt_equals_inf_sup} shows that the GTRS can be written as a convex minimization problem. Specifically, we can write $\Opt$ in one of the two following ways, corresponding respectively to the cases $\gamma_+<\infty$ and $\gamma_+=\infty$:
\begin{align}
\label{eq:gtrs_convex_reform}
\Opt = \inf_{x\in\R^n}\max\left(q(\gamma_-, x), q(\gamma_+, x)\right)
\quad\text{or}\quad
\Opt = \inf_{x\in\R^n}\set{q(\gamma_-, x):\, q_1(x)\leq 0}.
\end{align}
Note in the latter case that $A_1\succeq 0$ so that $q_1(x)\leq 0$ is a convex constraint.
Similarly, \eqref{eq:opt_equals_sup_inf} shows that the GTRS can be written as a concave maximization problem.

\begin{remark}\label{rem:gtrs_convex_reform}
The reformulation of the GTRS given in~\eqref{eq:gtrs_convex_reform} immediately suggests an algorithm for approximating $\Opt$:
Compute $\gamma_-$ (and if necessary $\gamma_+$) up to some accuracy and solve the resulting convex reformulation. Convergence guarantees along with rigorous error analyses for such an algorithm were previously explored by Wang and K{\i}l{\i}n\c{c}-Karzan~\cite{wang2020generalized}.
One drawback to this approach is that the convex functions $q(\gamma_-,x)$ and $q(\gamma_+,x)$ are, by construction, \emph{not} both strongly convex unless $A_0,A_1\succ 0$. Thus, in view of oracle lower bounds for first-order-methods~\cite[Chapter 2.1.2]{nesterov2018lectures}, one should not expect to achieve linear convergence rates via this approach.
Similarly, the reformulation of the GTRS given in \eqref{eq:opt_equals_sup_inf} immediately suggests an algorithm for approximating $\Opt$: apply a root-finding algorithm or binary search to find $\gamma^*$. This approach dates back to
Mor\'e and Sorenson~\cite{more1983computing} for the TRS and
Mor\'e~\cite{more1993generalizations} for the GTRS (see also~\cite{feng2012duality,adachi2019eigenvalue}). Unfortunately, theoretical convergence rates have not been established for algorithms of this form.\ije{\hfill\proofbox}{}
\end{remark}

We will combine both ideas above to construct strongly convex reformulations for instances of \eqref{eq:gtrs} possessing \emph{regularity}. Our notion of regularity will correspond to properties of $\mathbf{d}(\gamma)$ and its optimizers.
We will need the following notation.
\begin{definition}
For $\gamma\in\inter(\Gamma)$, define 
\begin{align*}
	x(\gamma)&\coloneqq -A(\gamma)^{-1}b(\gamma), \quad \nu(\gamma)\coloneqq q_1(x(\gamma)), \quad\text{and}\quad \mu(\gamma)\coloneqq \lambda_{\min}\left(A(\gamma)\right). \qedhere
\end{align*}
\end{definition}

The functions $\mathbf{d}(\gamma)$, $x(\gamma)$, and $\nu(\gamma)$ have been studied previously in the literature on algorithms for the TRS and the GTRS~\cite{adachi2019eigenvalue,more1983computing,more1993generalizations,feng2012duality}. In contrast to previous algorithms in this line of work, which propose methods for computing $\gamma^*$ to high accuracy, the algorithms we present in this paper will work with relatively inaccurate estimates of $\gamma^*$.	
Specifically, our algorithms are inspired by a key lemma, namely \cref{lem:gtrs_stability}, which says that if \eqref{eq:gtrs} has positive regularity, then the optimal solution to \eqref{eq:gtrs} is stable to inaccurate estimates of $\gamma^*$.
We begin by deriving some properties of $\mathbf{d}(\gamma)$ and its derivatives on $\inter(\Gamma)$.
\begin{lemma}
\label{lem:nu_is_derivative}
Suppose \cref{as:definiteness} holds.
If $\gamma\in\inter(\Gamma)$, then
\begin{align*}
\mathbf{d}(\gamma) = q(\gamma,x(\gamma))
\quad\text{and}\quad
\tfrac{d}{d\gamma}\mathbf{d}(\gamma) = \nu(\gamma).
\end{align*}
\end{lemma}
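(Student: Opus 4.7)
The plan is to establish both identities by exploiting the fact that on $\inter(\Gamma)$, the matrix $A(\gamma)$ is positive definite, which makes $q(\gamma,\cdot)$ a strongly convex quadratic in $x$ with a unique minimizer obtained by a closed-form first-order condition.

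First I would prove the identity $\mathbf{d}(\gamma) = q(\gamma, x(\gamma))$. Since $\gamma\in\inter(\Gamma)$ implies $A(\gamma)\succ 0$, the function $x\mapsto q(\gamma,x)$ is strictly convex and coercive, so its unique minimizer is determined by $\nabla_x q(\gamma, x) = 2A(\gamma)x + 2b(\gamma) = 0$, yielding $x = -A(\gamma)^{-1}b(\gamma) = x(\gamma)$. Substituting gives $\mathbf{d}(\gamma) = q(\gamma, x(\gamma))$ immediately, and one can also write this in the explicit form $\mathbf{d}(\gamma) = c(\gamma) - b(\gamma)^\top A(\gamma)^{-1} b(\gamma)$, which will be convenient for the derivative computation.

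For the derivative, I would apply the envelope theorem (a.k.a.\ Danskin's theorem in the smooth/unique-minimizer regime). On $\inter(\Gamma)$, the map $\gamma\mapsto A(\gamma)$ takes values in the open cone of positive definite matrices, so $\gamma\mapsto A(\gamma)^{-1}$ and hence $\gamma\mapsto x(\gamma)$ are $C^1$. Differentiating $\mathbf{d}(\gamma) = q(\gamma, x(\gamma))$ via the chain rule gives
\begin{align*}
\tfrac{d}{d\gamma}\mathbf{d}(\gamma) = \tfrac{\partial q}{\partial \gamma}(\gamma, x(\gamma)) + \nabla_x q(\gamma, x(\gamma))^\top x'(\gamma).
\end{align*}
The second term vanishes because $x(\gamma)$ is the unconstrained minimizer of $q(\gamma,\cdot)$, so $\nabla_x q(\gamma, x(\gamma)) = 0$. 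The first term is $q_1(x(\gamma)) = \nu(\gamma)$ since $q(\gamma,x) = q_0(x) + \gamma q_1(x)$. This gives the claimed derivative identity.

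There isn't a real obstacle here; the only subtlety is ensuring sufficient smoothness of $x(\gamma)$ to apply the chain rule, which follows from the smoothness of matrix inversion on the open set of invertible matrices. If one prefers to avoid the envelope argument, the same conclusion can be reached by directly differentiating $\mathbf{d}(\gamma) = c(\gamma) - b(\gamma)^\top A(\gamma)^{-1} b(\gamma)$ using the identity $\frac{d}{d\gamma} A(\gamma)^{-1} = -A(\gamma)^{-1} A_1 A(\gamma)^{-1}$ and simplifying the resulting expression using $b(\gamma) = -A(\gamma)x(\gamma)$ to recover $q_1(x(\gamma))$; this is purely mechanical.
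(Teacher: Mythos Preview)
Your proof is correct. The first identity is handled identically to the paper. For the derivative, you take the envelope/chain-rule route: differentiate $\mathbf{d}(\gamma)=q(\gamma,x(\gamma))$ and observe that the $\nabla_x q(\gamma,x(\gamma))^\top x'(\gamma)$ term vanishes at the minimizer, leaving $\partial_\gamma q = q_1(x(\gamma))$. The paper instead takes exactly the alternative you sketch at the end: it writes $\mathbf{d}(\gamma)=-b(\gamma)^\top A(\gamma)^{-1}b(\gamma)+c(\gamma)$ and differentiates directly using $\tfrac{d}{d\gamma}A(\gamma)^{-1}=-A(\gamma)^{-1}A_1A(\gamma)^{-1}$, then recognizes the result as $q_1(x(\gamma))$. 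Your envelope argument is cleaner and avoids the mechanical simplification; the paper's direct computation avoids needing to justify differentiability of $x(\gamma)$ (though you correctly note this follows from smoothness of matrix inversion on invertible matrices). Since you explicitly mention the direct-differentiation route as well, your proposal subsumes the paper's approach.
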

\begin{proof}
For $\gamma\in\inter(\Gamma)$, we have  $A(\gamma)\succ 0$ and thus $q(\gamma,x)$ is a strongly convex quadratic function in $x$. One may check that $\grad_x q(\gamma,x) = 2 \left(A(\gamma) x  + b(\gamma)\right)$, and thus $\mathbf{d}(\gamma) = q(\gamma,x(\gamma))$.

Next, from $\mathbf{d}(\gamma) = q(\gamma,x(\gamma))$ and $x(\gamma)= -A(\gamma)^{-1}b(\gamma)$, we deduce
\begin{align*}
\tfrac{d}{d\gamma}\mathbf{d}(\gamma) &= \tfrac{d}{d\gamma}\left(-b(\gamma)^\top A(\gamma)^{-1} b(\gamma) + c(\gamma)\right)\\
&= b(\gamma)^\top A(\gamma)^{-1}A_1 A(\gamma)^{-1}b(\gamma) -2b_1^\top A(\gamma)^{-1} b(\gamma) + c_1\\
&= q_1(x(\gamma)).\qedhere
\end{align*}
\end{proof}

\begin{lemma}
\label{lem:nu_derivatives}
	Suppose \cref{as:definiteness} holds. Let $\hat\gamma\in\inter(\Gamma)$, $P\coloneqq A(\hat\gamma)^{-1/2}$, and $\Delta\coloneqq (A_0P^2b_1 - A_1P^2b_0)$. Then, for $\gamma\in\inter(\Gamma)$,
	\begin{align*}
	\tfrac{d}{d\gamma}\nu(\gamma)&= -2\left(A_1x(\gamma) + b_1\right)^\top A(\gamma)^{-1} \left(A_1x(\gamma) + b_1\right)\\
	&= 
-2 \Delta^\top\left(A(\gamma)P^2A(\gamma)P^2A(\gamma)\right)^{-1}\Delta.
	\end{align*}
\end{lemma}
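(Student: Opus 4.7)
The plan is to prove the two equalities in sequence: the first is a direct chain-rule and implicit-differentiation calculation, while the second follows by substituting an algebraic identity that rewrites $A_1 x(\gamma) + b_1$ in terms of $\Delta$.

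For the first equality, I would apply the chain rule to $\nu(\gamma) = q_1(x(\gamma))$. Since $\grad q_1(x) = 2(A_1 x + b_1)$, this yields $\tfrac{d}{d\gamma}\nu(\gamma) = 2(A_1 x(\gamma) + b_1)^\top x'(\gamma)$. Implicitly differentiating the optimality condition $A(\gamma) x(\gamma) + b(\gamma) = 0$ in $\gamma$ gives $A_1 x(\gamma) + A(\gamma) x'(\gamma) + b_1 = 0$, so $x'(\gamma) = -A(\gamma)^{-1}(A_1 x(\gamma) + b_1)$, and substituting produces the first equality.

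For the second equality, set $v(\gamma) \coloneqq A_1 x(\gamma) + b_1$. Expanding $v(\gamma) = b_1 - A_1 A(\gamma)^{-1}(b_0 + \gamma b_1)$ and using $A_0 = A(\gamma) - \gamma A_1$ rewrites $v(\gamma)$ in the symmetric form $v(\gamma) = A_0 A(\gamma)^{-1} b_1 - A_1 A(\gamma)^{-1} b_0$. The key algebraic claim is
\begin{equation*}
v(\gamma) = A(\hat\gamma) A(\gamma)^{-1} \Delta.
\end{equation*}
Since $\Delta = A_0 A(\hat\gamma)^{-1} b_1 - A_1 A(\hat\gamma)^{-1} b_0$, the claim reduces to the two matrix identities $A(\hat\gamma) A(\gamma)^{-1} A_i = A_i A(\gamma)^{-1} A(\hat\gamma)$ for $i \in \set{0,1}$. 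Expanding $A(\hat\gamma) = A(\gamma) + (\hat\gamma - \gamma) A_1$ on both sides reduces these further to $A_1 A(\gamma)^{-1} A_i = A_i A(\gamma)^{-1} A_1$: this is trivial for $i = 1$ and, for $i = 0$, follows by expanding $A_0 = A(\gamma) - \gamma A_1$, since both sides then equal $A_1 - \gamma A_1 A(\gamma)^{-1} A_1$.

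Finally, substituting $v(\gamma) = A(\hat\gamma) A(\gamma)^{-1} \Delta$ into the first equality yields $\tfrac{d}{d\gamma}\nu(\gamma) = -2 \Delta^\top A(\gamma)^{-1} A(\hat\gamma) A(\gamma)^{-1} A(\hat\gamma) A(\gamma)^{-1} \Delta$, and then $A(\hat\gamma)^{-1} = P^2$ together with $(M_1 M_2 M_3)^{-1} = M_3^{-1} M_2^{-1} M_1^{-1}$ delivers the stated expression. The main obstacle is the claim $v(\gamma) = A(\hat\gamma) A(\gamma)^{-1} \Delta$, which is not at all obvious in the original coordinates since $A_0, A_1, A(\gamma), A(\hat\gamma)$ need not pairwise commute. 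It becomes transparent after the congruence $M \mapsto P M P$ (which sends $A(\hat\gamma)$ to $I$ and realizes the entire pencil as a polynomial in a single matrix), but in the original coordinates one must rely on the commutation identity $A_0 A(\gamma)^{-1} A_1 = A_1 A(\gamma)^{-1} A_0$ established above, which in turn reflects the fact that $A_0$ and $A_1$ together span the same two-dimensional subspace as $A(\gamma)$ and $A_1$.
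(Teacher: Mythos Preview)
Your proof is correct and structurally parallel to the paper's: both derive the first equality by chain rule plus implicit differentiation of $A(\gamma)x(\gamma)+b(\gamma)=0$, both establish the intermediate identity $A_1x(\gamma)+b_1 = A(\hat\gamma)A(\gamma)^{-1}\Delta$ (equivalently $(A(\gamma)P^2)^{-1}\Delta$), and both substitute. The genuine difference is in how that intermediate identity is proved. The paper passes to the congruence $M\mapsto PMP$, observes that $PA_0P$ and $PA_1P$ commute because $PA_0P+\hat\gamma PA_1P=I$, and uses this commutativity to swap factors and obtain $A_iA(\gamma)^{-1}b_j = (A(\gamma)P^2)^{-1}A_iP^2b_j$ directly. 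You instead stay in the original coordinates and reduce the claim to the commutation identity $A_0A(\gamma)^{-1}A_1 = A_1A(\gamma)^{-1}A_0$, which you verify by the elementary substitution $A_0=A(\gamma)-\gamma A_1$. Your route is slightly more self-contained (no square root $P$ is actually needed until the very last cosmetic rewriting), while the paper's route makes the underlying reason---that the whole pencil is simultaneously diagonalized by $P$---more visible. You in fact note this yourself in your closing remarks, so you have both arguments in hand.
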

\begin{proof}
Starting from $\nu(\gamma)=q_1(x(\gamma))$, we compute
\begin{align*}
\tfrac{d}{d\gamma}\nu(\gamma) &= \ip{\grad_x q_1(x)\mid_{x = x(\gamma)}, \grad_\gamma x(\gamma)}\\
&= -2\ip{A_1x(\gamma) + b_1, A(\gamma)^{-1} (A_1x(\gamma) + b_1)}\\
&= -2 (A_1x(\gamma) + b_1)^\top A(\gamma)^{-1} (A_1x(\gamma) + b_1).
\end{align*}
Note also that
\begin{align*}
A_1 x(\gamma) + b_1 &= A(\gamma)A(\gamma)^{-1}b_1 - A_1A(\gamma)^{-1}b(\gamma)\\
&= \left(A_0A(\gamma)^{-1}b_1 +\gamma A_1A(\gamma)^{-1}b_1\right) - \left(A_1A(\gamma)^{-1}b_0 + \gamma A_1A(\gamma)^{-1}b_1\right)\\
&= A_0A(\gamma)^{-1}b_1 - A_1A(\gamma)^{-1}b_0.
\end{align*}

Next, suppose $\hat\gamma\in\inter(\Gamma)$
and let $P \coloneqq A(\hat\gamma)^{-1/2}$. Then,
$PA_0P$ and $PA_1P$ commute. Indeed, $PA_0P + \hat\gamma PA_1P = PA(\hat\gamma)P = I$.
Then,
\begin{align*}
A_0A(\gamma)^{-1}b_1 &= P^{-1} P A_0 P(PA(\gamma)P)^{-1} Pb_1\\
&= P^{-1}(PA(\gamma)P)^{-1} PA_0P^2 b_1\\
&= (A(\gamma)P^2)^{-1} A_0 P^2 b_1.
\end{align*}
Similarly, $A_1A(\gamma)^{-1}b_0 = (A(\gamma)P^2)^{-1} A_1 P^2 b_0$. We deduce
\begin{align*}
\tfrac{d}{d\gamma}\nu(\gamma) &= 
-2 \left(A_0 P^2 b_1 - A_1 P^2 b_0\right)^\top\left(A(\gamma)P^2A(\gamma)P^2A(\gamma)\right)^{-1}\left(A_0 P^2 b_1 - A_1 P^2 b_0\right).\qedhere
\end{align*}
\end{proof}

\begin{corollary}
\label{lem:gtrs_nu}
Suppose \cref{as:definiteness} holds. Then, $\nu(\gamma)$ is either a strictly decreasing or constant function of $\gamma$.
\end{corollary}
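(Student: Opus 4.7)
My plan is to read off the desired dichotomy directly from the second expression for $\tfrac{d}{d\gamma}\nu(\gamma)$ in \cref{lem:nu_derivatives}. Fix any $\hat\gamma\in\inter(\Gamma)$, set $P\coloneqq A(\hat\gamma)^{-1/2}$ (which is well-defined and nonsingular because $A(\hat\gamma)\succ 0$), and let $\Delta\coloneqq A_0P^2b_1 - A_1P^2b_0$. The crucial structural point is that $\Delta$ depends only on the problem data and on $\hat\gamma$, and in particular does \emph{not} depend on the variable $\gamma$.

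Next I would argue that for every $\gamma\in\inter(\Gamma)$, the matrix $M(\gamma)\coloneqq A(\gamma)P^2A(\gamma)P^2A(\gamma)$ is positive definite, so its inverse is as well. Indeed, since $\gamma\in\inter(\Gamma)$ we have $A(\gamma)\succ 0$, and $P$ is nonsingular, so $A(\gamma)P$ is nonsingular; writing $M(\gamma) = (A(\gamma)P)(PA(\gamma)P)(PA(\gamma))$ and recalling that $PA(\gamma)P\succ 0$ (it is a congruence of $A(\gamma)\succ 0$), one checks that $M(\gamma)$ is the product of a nonsingular matrix, a positive definite matrix, and the transpose of the first factor, hence $M(\gamma)\succ 0$. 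Therefore \cref{lem:nu_derivatives} yields
\begin{align*}
\tfrac{d}{d\gamma}\nu(\gamma) \;=\; -2\,\Delta^\top M(\gamma)^{-1}\Delta \;\leq\; 0
\end{align*}
with equality if and only if $\Delta=0$.

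Now the dichotomy falls out of the fact that $\Delta$ is a single fixed vector. Either $\Delta=0$, in which case $\tfrac{d}{d\gamma}\nu(\gamma) = 0$ for every $\gamma\in\inter(\Gamma)$ and so $\nu$ is constant on the connected interval $\inter(\Gamma)$ (extending to $\Gamma$ by continuity where applicable), or $\Delta\neq 0$, in which case $-\Delta^\top M(\gamma)^{-1}\Delta <0$ strictly for every $\gamma\in\inter(\Gamma)$, making $\nu$ strictly decreasing on $\inter(\Gamma)$.

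There is essentially no obstacle here: the only thing to verify carefully is positive definiteness of $M(\gamma)$, and the key insight, which \cref{lem:nu_derivatives} has already set up, is the change of coordinates via $\hat\gamma$ that pulls the $\gamma$-dependence entirely into the middle matrix and leaves the ``direction'' $\Delta$ fixed. Without that reformulation, the first expression $-2(A_1x(\gamma)+b_1)^\top A(\gamma)^{-1}(A_1x(\gamma)+b_1)$ only gives pointwise nonpositivity, and one would still need to rule out the possibility that the vector $A_1x(\gamma)+b_1$ vanishes at isolated values of $\gamma$ while being nonzero elsewhere; the rewriting in terms of the $\gamma$-independent $\Delta$ is precisely what makes the clean dichotomy immediate.
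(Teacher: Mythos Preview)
Your proof is correct and follows essentially the same approach as the paper: fix $\hat\gamma\in\inter(\Gamma)$, invoke the second formula in \cref{lem:nu_derivatives}, and split on whether $\Delta$ vanishes. The paper's proof is a terse one-liner that omits the verification that $M(\gamma)\succ 0$; you simply fill in that detail (and your factorization $M(\gamma)=(A(\gamma)P)(PA(\gamma)P)(PA(\gamma))$ with $(PA(\gamma))=(A(\gamma)P)^\top$ is the right way to see it).
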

\begin{proof}
Fix $\hat\gamma\in\inter(\Gamma)$. By \cref{lem:nu_derivatives}, $\nu(\gamma)$ is strictly decreasing if $A_0A(\hat\gamma)^{-1}b_1 - A_1A(\hat\gamma)^{-1}b_0$ is nonzero. Else, $\nu(\gamma)$ is constant.
\end{proof}

\begin{corollary}
\label{cor:existence_optimizers}
Suppose \cref{as:definiteness} holds. Then, $\argmax_{\gamma\in\R_+}\mathbf{d}(\gamma)$ is either a unique point or is all of $\Gamma$. In the latter case, we furthermore have that $\Gamma$ is compact.
\end{corollary}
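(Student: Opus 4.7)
The plan is to case-split according to \cref{lem:gtrs_nu}: on $\inter(\Gamma)$, the derivative $\tfrac{d}{d\gamma}\mathbf{d}(\gamma) = \nu(\gamma)$ (by \cref{lem:nu_is_derivative}) is either strictly decreasing or constant. In either case, any maximizer must lie in $\Gamma$, since $\mathbf{d}(\gamma) = -\infty$ for $\gamma\in\R_+\setminus\Gamma$ by \cref{obs:dual_properties}. Since $\mathbf{d}$ is concave and continuous on $\Gamma$, and $\mathbf{d}(\gamma)\to-\infty$ as $\gamma\to\infty$, a maximum is attained somewhere in $\Gamma$.

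In the first case ($\nu$ strictly decreasing on $\inter(\Gamma)$), I would argue by contradiction: if $\mathbf{d}$ attained its maximum at two distinct points $\gamma_1<\gamma_2$ in $\Gamma$, then by concavity $\mathbf{d}$ would be constant on $[\gamma_1,\gamma_2]$, so its derivative $\nu$ would vanish on the open subinterval $(\gamma_1,\gamma_2)\subseteq\inter(\Gamma)$. This contradicts strict monotonicity, so the argmax is a singleton. In the second case ($\nu\equiv\nu^*$ on $\inter(\Gamma)$), $\mathbf{d}$ is affine with slope $\nu^*$ on $\inter(\Gamma)$, hence on $\Gamma$ by continuity. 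I would then split on whether $\Gamma$ is bounded. If $\gamma_+=\infty$, then $\mathbf{d}(\gamma)\to-\infty$ as $\gamma\to\infty$ forces $\nu^*<0$, and the argmax is the singleton $\{\gamma_-\}$. If instead $\gamma_+<\infty$, then $\Gamma=[\gamma_-,\gamma_+]$ is compact and the affine $\mathbf{d}$ is maximized at $\{\gamma_+\}$ if $\nu^*>0$, at $\{\gamma_-\}$ if $\nu^*<0$, and at all of $\Gamma$ if $\nu^*=0$. Only the last sub-case produces a non-singleton argmax, and there $\Gamma$ is compact, as required.

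There is no real obstacle here: the proof is a direct case analysis built on top of \cref{lem:gtrs_nu} and \cref{obs:dual_properties}. The only point that needs some care is ensuring that the non-singleton possibility in the second case cannot occur when $\Gamma$ is unbounded, which is handled by the coercive behavior $\mathbf{d}(\gamma)\to-\infty$ guaranteed by \cref{as:definiteness} via $\bar x$ with $q_1(\bar x)<0$.
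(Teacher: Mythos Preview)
Your proposal is correct and uses essentially the same ingredients as the paper's proof: \cref{lem:gtrs_nu}, \cref{lem:nu_is_derivative}, concavity and continuity of $\mathbf{d}$, and the coercive behavior $\mathbf{d}(\gamma)\to-\infty$. The only difference is organizational: the paper argues the contrapositive (assume two maximizers, deduce $\nu\equiv 0$ on $\inter(\Gamma)$ via the dichotomy, conclude $\mathbf{d}$ is constant on $\Gamma$ and $\Gamma$ is compact), whereas you case-split on the dichotomy of \cref{lem:gtrs_nu} up front and then analyze the affine sub-case by the sign of $\nu^*$ and the boundedness of $\Gamma$. Both routes are equally short and rest on the same ideas.
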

\begin{proof}
Note that by \cref{as:definiteness}, $\sup_{\gamma\in\R_+}\mathbf{d}(\gamma)$ is achieved. Indeed, as noted in \cref{obs:dual_properties}, $\mathbf{d}(\gamma)\to-\infty$ as $\gamma\to\infty$. Thus, $\argmax_{\gamma\in\R_+}\mathbf{d}(\gamma)$ is nonempty.

We will suppose that $\argmax_{\gamma\in\R_+}\mathbf{d}(\gamma)$ contains at least two points, $\gamma^{(1)}<\gamma^{(2)}$, and show that $\mathbf{d}(\gamma)$ is constant on $\Gamma$.
Note, by concavity of $\mathbf{d}(\gamma)$ and \cref{lem:nu_is_derivative}, we have that $\nu(\gamma) = 0$ for all $\gamma\in(\gamma^{(1)},\gamma^{(2)})$. By \cref{as:definiteness,lem:gtrs_nu}, $\nu(\gamma) = 0$ on all of $\inter(\Gamma)$ so that $\mathbf{d}(\gamma)$ is constant on $\inter(\Gamma)$. By continuity of $\mathbf{d}(\gamma)$ on $\Gamma$ (see \cref{obs:dual_properties}), $\mathbf{d}(\gamma)$ is then constant on all of $\Gamma$. This then implies that $\Gamma$ is compact as again by \cref{obs:dual_properties}, we have $\mathbf{d}(\gamma)\to-\infty$ as $\gamma\to\infty$.
\end{proof}

We now define our notion of regularity for the GTRS.
\begin{definition}
\label{def:regularity}
If $\sup_{\gamma\in\R_+}\mathbf{d}(\gamma)$ has a unique maximizer, then set $\gamma^*$ to be the unique maximizer. Otherwise, $\argmax_{\gamma\in\R_+}\mathbf{d}(\gamma) = \Gamma$ and let $\gamma^*\in \argmax_{\gamma\in\Gamma} \mu(\gamma)$.
Let $\mu^*\coloneqq \mu(\gamma^*)$.
We will say that the GTRS \eqref{eq:gtrs} has regularity $\mu^*$.
\end{definition}
\cref{cor:existence_optimizers} ensures that $\argmax_{\gamma\in\R_+}\mathbf{d}(\gamma)$ and $\mu^*$ in \cref{def:regularity} are well-defined. Note that, technically, $\gamma^*$ is \emph{not} well-defined if $\argmax_{\gamma\in\R_+}\mathbf{d}(\gamma) = \Gamma$ and $\mu(\gamma)$ has more than one maximizer. This is inconsequential and we may work with an arbitrary $\gamma\in\argmax_{\gamma\in\Gamma}\mu(\gamma)$. For concreteness, one may take $\gamma^*$ to be the minimum maximizer of $\mu(\gamma)$ in this case.

\begin{remark}\label{rem:trs_regularity_easy_hard}
We make a few observations on our definition of regularity and compare it to the so-called ``easy'' and ``hard'' cases of the trust-region subproblem (TRS).
Recall that the TRS is the special case of the GTRS~\eqref{eq:gtrs} where $q_1(x) = x^\top x - 1$, i.e., the constraint $q_1(x) \leq 0$ corresponds to the unit ball constraint $\norm{x}^2\leq 1$. We will assume that $A_0\not\succeq 0$.
Let $V\subseteq\R^n$ denote the eigenspace corresponding to $\lambda_{\min}(A_0)$.
The ``easy'' and ``hard'' cases of the TRS correspond to the cases $\Pi_V(b_0)\neq 0$ and $\Pi_V(b_0) = 0$ respectively. Here, $\Pi_V$ is the projection onto $V$.

In the ``easy'' case, it is possible to show that $\lim_{\gamma\searrow -\lambda_{\min}(A_0)} \mathbf{d}(\gamma) = -\infty$ so that $\gamma^*> -\lambda_{\min}(A_0)$ and $\mu^*>0$.
On the other hand, it is possible for $\mu^*>0$ even in the ``hard'' case. For example, taking $n =2$ and
\begin{align*}
A_0 = \begin{pmatrix}
	1 &\\
	& -1
\end{pmatrix},\qquad
b_0 = \begin{pmatrix}
	3\\
	0
\end{pmatrix},\qquad c_0 = 0,
\end{align*}
we have $\Gamma=[1,+\infty)$ and 
$\mathbf{d}(\gamma) = -9(1+\gamma)^{-1} -\gamma$ on $\inter(\Gamma)$. A simple computation then shows $\gamma^* = 2$ and $\mu^* = 1$.
We conclude that $\mu^* = 0$ implies the ``hard case'' but not necessarily vice versa.
\ije{\hfill\proofbox}{}
\end{remark}

We are now ready to present and prove our key lemma.

\begin{lemma}
	\label{lem:gtrs_stability}
	Suppose \cref{as:definiteness} holds, $\mu^*>0$ and the interval $[\gamma^{(1)},\gamma^{(2)}]\subseteq\R_+$ contains $\gamma^*$. Then, $\nu(\gamma^*)=0$ and $x(\gamma^*)$ is the unique optimizer of both \eqref{eq:gtrs} and
	\begin{align}
	\label{eq:gtrs_strong_convex_reformulation}
	\inf_{x\in\R^n}\max\left(q(\gamma^{(1)},x),q(\gamma^{(2)},x)\right).
	\end{align}
	In particular, taking $[\gamma^{(1)},\gamma^{(2)}]\subseteq\inter(\Gamma)$, we have that $x(\gamma^*)$ is the unique optimizer to the strongly convex problem \eqref{eq:gtrs_strong_convex_reformulation}.
\end{lemma}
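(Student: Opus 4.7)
The plan is to establish the three claims sequentially: (i) $\nu(\gamma^*)=0$, (ii) $x(\gamma^*)$ is the unique GTRS optimizer, and (iii) $x(\gamma^*)$ is the unique optimizer of the min-max reformulation \eqref{eq:gtrs_strong_convex_reformulation}. The foundational observation is that $\mu^*>0$ forces $A(\gamma^*)\succ 0$, so $\gamma^*\in\inter(\Gamma)$, the quadratic $q(\gamma^*,\cdot)$ is strongly convex, and \cref{lem:nu_is_derivative} applies at $\gamma^*$.

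For claim (i), I would split on the dichotomy from \cref{cor:existence_optimizers}. If $\gamma^*$ is the unique maximizer of $\mathbf{d}$ on $\R_+$, then because $\gamma^*\in\inter(\Gamma)$ is an interior maximizer, first-order optimality combined with $\tfrac{d}{d\gamma}\mathbf{d}(\gamma^*)=\nu(\gamma^*)$ gives $\nu(\gamma^*)=0$. Otherwise $\argmax_{\gamma\in\R_+}\mathbf{d}(\gamma)=\Gamma$, in which case $\mathbf{d}$ is constant on $\Gamma$, so its derivative $\nu$ vanishes throughout $\inter(\Gamma)$ and again $\nu(\gamma^*)=0$.

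For claim (ii), the identity $\nu(\gamma^*)=q_1(x(\gamma^*))=0$ shows that $x(\gamma^*)$ is GTRS-feasible, while
\begin{align*}
q_0(x(\gamma^*))=q_0(x(\gamma^*))+\gamma^*q_1(x(\gamma^*))=q(\gamma^*,x(\gamma^*))=\mathbf{d}(\gamma^*)=\Opt,
\end{align*}
where the penultimate equality is \cref{lem:nu_is_derivative} and the last is \eqref{eq:opt_equals_sup_inf}, so $x(\gamma^*)$ attains $\Opt$. Uniqueness follows from strong convexity of $q(\gamma^*,\cdot)$: any GTRS optimizer $\tilde x$ satisfies $q(\gamma^*,\tilde x)\leq q_0(\tilde x)=\Opt=q(\gamma^*,x(\gamma^*))$, and $x(\gamma^*)$ is the unique unconstrained minimizer of the strongly convex $q(\gamma^*,\cdot)$.

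For claim (iii), since $\gamma^*\in[\gamma^{(1)},\gamma^{(2)}]$, write $\gamma^*=(1-t)\gamma^{(1)}+t\gamma^{(2)}$ for some $t\in[0,1]$. Then for every $x\in\R^n$,
\begin{align*}
\max\left(q(\gamma^{(1)},x),q(\gamma^{(2)},x)\right)\geq (1-t)q(\gamma^{(1)},x)+t\,q(\gamma^{(2)},x)=q(\gamma^*,x)\geq \mathbf{d}(\gamma^*)=\Opt.
\end{align*}
At $x=x(\gamma^*)$, both $q(\gamma^{(i)},x(\gamma^*))=q_0(x(\gamma^*))=\Opt$ because $q_1(x(\gamma^*))=0$, so the bound is tight. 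Uniqueness follows because any alternate optimizer $\tilde x$ must force equality throughout the chain above; in particular $q(\gamma^*,\tilde x)=q(\gamma^*,x(\gamma^*))$, and strong convexity identifies $\tilde x=x(\gamma^*)$. The ``In particular'' clause is immediate once one notes that the pointwise maximum of two strongly convex functions is strongly convex, so restricting $[\gamma^{(1)},\gamma^{(2)}]\subseteq\inter(\Gamma)$ makes \eqref{eq:gtrs_strong_convex_reformulation} genuinely strongly convex. The only subtle point is handling the case of \cref{cor:existence_optimizers} where $\gamma^*$ is defined via maximization of $\mu$ rather than first-order optimality of $\mathbf{d}$; this is absorbed cleanly by constancy of $\mathbf{d}$ on $\Gamma$.
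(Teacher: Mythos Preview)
Your proposal is correct and follows essentially the same approach as the paper's proof: both arguments lower-bound the min-max objective (and, respectively, $q_0$ on the feasible set) by the strongly convex function $q(\gamma^*,\cdot)$ via the affine dependence of $q(\gamma,x)$ on $\gamma$, then use $q_1(x(\gamma^*))=0$ to show tightness at $x(\gamma^*)$ and strong convexity for uniqueness. Your treatment is slightly more explicit---you separate the dichotomy of \cref{cor:existence_optimizers} when establishing $\nu(\gamma^*)=0$ and spell out the GTRS case rather than saying ``verbatim''---but the substance is the same.
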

\begin{proof}
We show that $x(\gamma^*)$ is the unique minimizer of \eqref{eq:gtrs_strong_convex_reformulation}.
Note that for all $x\in\R^n$, we have
\begin{align*}
\max\left(q(\gamma^{(1)},x),q(\gamma^{(2)},x)\right) \geq
q(\gamma^*, x) \geq \inf_{x\in\R^n} q(\gamma^*, x) = \mathbf{d}(\gamma^*),
\end{align*}
where the first inequality follows from the facts that $\gamma^*\in[\gamma^{(1)},\gamma^{(2)}]$ and $q(\gamma,x)$ is an affine function of $\gamma$.
On the other hand, as $\gamma^*\in\inter(\Gamma)$ is a maximizer of the smooth concave function $\mathbf{d}(\gamma)$ (see \cref{obs:dual_properties,def:regularity}), we have that $0=\tfrac{d}{d\gamma}\mathbf{d}(\gamma)|_{\gamma=\gamma^*}=\nu(\gamma^*) =q_1(x(\gamma^*))$ where the second equation follows from \cref{lem:nu_is_derivative}. 
Then, $q_1(x(\gamma^*))=0$ implies that $q(\gamma,x(\gamma^*))=q_0(x(\gamma^*))$ for any $\gamma$. 
Hence, we deduce that
\begin{align*}
\max\left(q(\gamma^{(1)},x(\gamma^*)),q(\gamma^{(2)},x(\gamma^*))\right) =
q(\gamma^*, x(\gamma^*)) = \mathbf{d}(\gamma^*)
\end{align*}
so that $x(\gamma^*)$ is a minimizer of \eqref{eq:gtrs_strong_convex_reformulation}.
Uniqueness of $x(\gamma^*)$ then follows from the fact that $q(\gamma^*, x)$ is a strongly convex function of $x$ and it lower bounds the objective function $\max\left(q(\gamma^{(1)},x),q(\gamma^{(2)},x)\right)$ of \eqref{eq:gtrs_strong_convex_reformulation}.

The proof that $x(\gamma^*)$ is the unique optimizer of \eqref{eq:gtrs} follows verbatim using the lower bound: $q_0(x)\geq q(\gamma^*,x)$ for all $x\in\R^n$ such that $q_1(x) \leq 0$.
\end{proof}

\section{Algorithms for the GTRS}
\label{sec:constructing_sc_reform}
We now turn to the GTRS and present an approach for computing $\Opt$ that exploits regularity in \eqref{eq:gtrs}.
Our approach will consist of two parts: constructing a convex reformulation of \eqref{eq:gtrs} and solving the convex reformulation.
In conjunction, these two pieces will allow us to achieve \emph{linear} convergence rates for \eqref{eq:gtrs} whenever $\mu^*>0$.

Similar to other recent papers on the GTRS~\cite{wang2020generalized,jiang2020linear}, we will assume that we are given as input the problem data $(A_0, A_1, b_0, b_1, c_0, c_1)$, regularity parameters $(\xi,\zeta,\hat\gamma)$, and error and failure parameters $(\epsilon,p)$. We will make the following assumption on our input data.
\begin{assumption}
\label{as:alg_gtrs}
Suppose that for both $i\in\set{0,1}$, $A_i$ has at least one negative eigenvalue, $\norm{A_i},\norm{b_i},\abs{c_i}\leq 1$. Let $N$ denote the number of nonzero entries in $A_0$ and $A_1$ combined and assume $N\geq n$.
Furthermore, suppose $\gamma_+ \leq \zeta$, $A(\hat\gamma)\succeq \xi I$, $0<\xi\leq 1\leq \zeta$, and $\epsilon,p\in(0,1)$.
\end{assumption}
These assumptions are relatively minor. Indeed, $N\geq n$ without loss of generality. Furthermore, if any of the norms $\norm{A_i}, \norm{b_i},\abs{c_i}$ are larger than $1$, we may scale the entire function $q_i(x)$ until \cref{as:alg_gtrs} holds.

\begin{remark}\label{rem:regularity}
	The regularity parameters $\xi$ and $\zeta$ will appear in our error and running time bounds.
	We make no attempt to optimize constants in these bounds and will routinely apply the following bounds (following from \cref{as:alg_gtrs}) for $\gamma\in\Gamma$: $\norm{A(\gamma)},\norm{b(\gamma)},\abs{c(\gamma)} \leq 1 + \zeta \leq 2\zeta$.\ije{\hfill\proofbox}{}
\end{remark}

Our first algorithm, \texttt{ConstructReform} (\cref{alg:construct_reform_gtrs}), will attempt to construct a convex reformulation of \eqref{eq:gtrs} with strong convexity on the order of $\min(\mu^*,\xi)$. 
Note, however, that it may be undesirable to compute this reformulation if $\min(\mu^*,\xi) \lesssim \epsilon$. In view of this, we define
\begin{align*}
\phi\coloneqq \max\left((\min(\mu^*,\xi),\, \epsilon\xi^4/\zeta^4\right).
\end{align*}
To understand this quantity, note that $[\epsilon\xi^4/\zeta^4, \xi]$ is an interval and that $\phi$ is the closest point to $\mu^*$ in this interval.
Then, \texttt{ConstructReform}, will either output an exact \emph{strongly convex} reformulation of \eqref{eq:gtrs} with strong convexity on the order $\phi$ or an $\epsilon$-approximate optimizer.
In the former case, we will then apply our second algorithm, \texttt{SolveRegular} (\cref{alg:solveRegular}), to compute an $\epsilon$-approximate optimizer.

\begin{remark}\label{rem:construct_reform_only_once}
\texttt{ConstructReform} needs to successfully output an exact strongly convex reformulation only \emph{once}. Specifically, if after computing a strongly convex reformulation of \eqref{eq:gtrs}, the value of $\epsilon>0$ is changed, we may skip running \texttt{ConstructReform} a second time and simply run \texttt{SolveRegular} with the new value of $\epsilon>0$.
\ije{\hfill\proofbox}{}
\end{remark}

\cref{app:useful_procedures} contains useful algorithms and guarantees from the literature that we will use as building blocks in \texttt{ConstructReform} and \texttt{SolveRegular}. Specifically, \cref{app:useful_procedures} recalls the running time of the conjugate gradient algorithm for minimizing a quadratic function (\cref{lem:conjugate_grad}), the running time of the Lanczos method for finding a minimum eigenvalue (\cref{lemma:ApproxEig}), and the running time of Nesterov's accelerated gradient descent method for minimax problems applied to the maximum of two quadratic functions (\cref{lem:acc_minimax}). We additionally present \texttt{ApproxGammaLeft}, a minor modification of \cite[Algorithm 2]{wang2020generalized} for finding an aggregation weight $\gamma\leq \hat\gamma$ such that $\mu(\gamma)$ falls in a specified range, and \texttt{ApproxNu}, a restatement of the conjugate gradient guarantee for the purpose of approximating $\nu(\gamma)$. We state the guarantees of \texttt{ApproxGammaLeft} and \texttt{ApproxNu} below and leave their proofs to \cref{app:useful_procedures}.

\begin{restatable}{lemma}{lemapproxgamma}
\label{lem:approx_gamma}
Suppose \cref{as:alg_gtrs} holds, $\mu\in(0,\xi)$ and $p\in(0,1)$. Then, with probability at least $1-p$, \texttt{ApproxGammaLeft}$(\mu,p)$ (\cref{alg:ApproxGamma}) returns $(\gamma,v)$ such that $\gamma\leq \hat\gamma$ and $v$ is a unit vector satisfying $\mu/2 \leq \mu(\gamma)\leq v^\top A(\gamma) v \leq \mu$ in time
\begin{align*}
\tilde O\left(\tfrac{N\sqrt{\zeta}}{\sqrt{\mu}}\log\left(\tfrac{n}{p}\right)\log\left(\tfrac{\zeta}{\mu}\right)\right).
\end{align*}
\end{restatable}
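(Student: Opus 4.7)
The algorithm \texttt{ApproxGammaLeft}$(\mu,p)$ is a binary search over $\gamma \in [0,\hat\gamma]$ that uses the Lanczos method (\cref{lemma:ApproxEig}) as an inner subroutine. The key structural fact is that $\mu(\gamma)=\lambda_{\min}(A(\gamma))$ is concave in $\gamma$ (as the minimum of affine functions) and Lipschitz with constant at most $\|A_1\|\leq 1$. Moreover, by \cref{as:alg_gtrs} we have $\mu(0)<0$ (since $A_0$ has a negative eigenvalue) while $\mu(\hat\gamma)\geq \xi>\mu$. By the intermediate value theorem there is a leftmost $\gamma_1\in[0,\hat\gamma]$ with $\mu(\gamma_1)=3\mu/4$, and since $\mu$ is concave on $\Gamma$ with both $\mu(\gamma_1)=3\mu/4$ and $\mu(\hat\gamma)\geq \xi>3\mu/4$, the chord inequality gives $\mu(\gamma)\geq 3\mu/4$ on all of $[\gamma_1,\hat\gamma]$. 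Thus the target is to locate a point in an $O(\mu)$-neighborhood of $\gamma_1$.

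The algorithm maintains $[l,r]\subseteq[0,\hat\gamma]$ with $\gamma_1\in[l,r]$, initialized to $[0,\hat\gamma]$. At each iteration, the midpoint $m=(l+r)/2$ is tested by calling Lanczos on $A(m)$ with accuracy $\delta=\mu/8$ and failure probability $p/K$, where $K=O(\log(\zeta/\mu))$ is the iteration budget; this returns a unit vector $v_m$ satisfying $\mu(m)\leq \tilde\mu(m):=v_m^\top A(m)v_m\leq \mu(m)+\delta$. We then branch on $\tilde\mu(m)$: if $\tilde\mu(m)>\mu$, then $\mu(m)>\mu-\delta=7\mu/8>3\mu/4$, so $m\geq\gamma_1$ and we set $r\leftarrow m$; if $\tilde\mu(m)\leq 5\mu/8$, then $\mu(m)\leq 5\mu/8<3\mu/4$, so $m<\gamma_1$ and we set $l\leftarrow m$; otherwise $\tilde\mu(m)\in(5\mu/8,\mu]$, in which case we output $(m,v_m)$, since then $\mu(m)\geq \tilde\mu(m)-\delta>\mu/2$ and $v_m^\top A(m)v_m=\tilde\mu(m)\leq\mu$, exactly as the lemma requires.

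For termination and complexity, observe that once $r-l\leq\mu/8$, every midpoint satisfies $|m-\gamma_1|\leq\mu/8$, hence $|\mu(m)-3\mu/4|\leq\mu/8$ by Lipschitz continuity, so $\mu(m)\in[5\mu/8,7\mu/8]$ and $\tilde\mu(m)\in[5\mu/8,\mu]$, triggering the exit branch. This bounds the number of iterations by $K=O(\log(\hat\gamma/\mu))=O(\log(\zeta/\mu))$. Each Lanczos call on $A(m)$---which has spectral norm at most $2\zeta$ by \cref{rem:regularity}---with accuracy $\mu/8$ and failure probability $p/K$ runs in $\tilde O(N\sqrt{\zeta/\mu}\,\log(n/p))$ time by \cref{lemma:ApproxEig} (absorbing the $\log\log$-factor from $p/K$ into $\tilde O$). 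A union bound over the $K$ invocations then gives the claimed total failure probability $p$ and running time $\tilde O(N\sqrt{\zeta}/\sqrt{\mu}\,\log(n/p)\log(\zeta/\mu))$.

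The main obstacle is that Lanczos only produces upper bounds on the minimum eigenvalue, yet the lemma requires the matching lower bound $\mu(\gamma)\geq\mu/2$. The remedy is to leave a cushion of size $\delta=\mu/8$: by targeting the approximate-eigenvalue interval $(5\mu/8,\mu]$ rather than the naive $[\mu/2,\mu]$, the Lanczos error bound upgrades the test $\tilde\mu(m)>5\mu/8$ into the genuine lower bound $\mu(m)>\mu/2$. The rest---the invariance $\gamma_1\in[l,r]$ under the two update rules, Lipschitz-based termination of the binary search, and the union bound over Lanczos failures---is routine.
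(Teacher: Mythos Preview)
Your proof is correct and follows essentially the same approach as the paper: a binary search on $[0,\hat\gamma]$ using Lanczos with accuracy $\mu/8$, together with the $1$-Lipschitz concavity of $\mu(\gamma)$ to guarantee termination in $O(\log(\zeta/\mu))$ iterations. The only cosmetic difference is that you maintain the invariant that a single target point $\gamma_1$ (where $\mu(\gamma_1)=3\mu/4$) stays in the search interval, whereas the paper maintains that the entire target interval $\mathcal{I}=\{\gamma\le\hat\gamma:\mu(\gamma)\in[5\mu/8,7\mu/8]\}$ stays inside, and then derives a length contradiction; both yield the same conclusion.
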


\begin{restatable}{lemma}{lemapproxnu}
\label{lem:approxnu}
Suppose \cref{as:alg_gtrs} holds, $\mu\in(0,\xi]$, $\delta\in(0,1)$, and $A(\gamma)\succeq \mu I$. Then \texttt{ApproxNu}$(\mu,\delta,\gamma)$ (\cref{alg:approxnu}) returns $(\tilde x,\tilde\nu)$ such that
$\norm{\tilde x - x(\gamma)} \leq \mu\delta/10\zeta$, and $
\tilde\nu = q_1(\tilde x) \in [\nu(\gamma) \pm \delta]$ in time
\begin{align*}
O\left(\tfrac{N\sqrt{\zeta}}{\sqrt{\mu}}\log\left(\tfrac{\zeta}{\mu\delta}\right)\right).
\end{align*}
\end{restatable}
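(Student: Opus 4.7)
The plan is to compute $\tilde x$ by applying conjugate gradient (\cref{lem:conjugate_grad}) to approximately solve $A(\gamma) x = -b(\gamma)$, whose unique solution is $x(\gamma)$, and then return $\tilde \nu := q_1(\tilde x)$. Since $\mu I \preceq A(\gamma)$ and $\norm{A(\gamma)} \leq 2\zeta$ by \cref{rem:regularity}, the condition number of the associated strongly convex quadratic $\tfrac12 x^\top A(\gamma) x + b(\gamma)^\top x$ is at most $2\zeta/\mu$. Calling conjugate gradient with target accuracy $\mu\delta/(10\zeta)$ yields $\tilde x$ with $\norm{\tilde x - x(\gamma)} \leq \mu\delta/(10\zeta)$ in time $O\bigl(N\sqrt{\zeta/\mu}\,\log(\zeta/(\mu\delta))\bigr)$, matching the claimed running time up to constants.

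It remains to verify that this choice of accuracy propagates to the bound $|q_1(\tilde x) - q_1(x(\gamma))| \leq \delta$. Writing $e := \tilde x - x(\gamma)$ and expanding,
\begin{align*}
q_1(\tilde x) - q_1(x(\gamma)) = e^\top A_1 e + 2 e^\top \bigl(A_1 x(\gamma) + b_1\bigr).
\end{align*}
From $x(\gamma) = -A(\gamma)^{-1} b(\gamma)$, \cref{as:alg_gtrs}, and \cref{rem:regularity}, I get $\norm{x(\gamma)} \leq \norm{A(\gamma)^{-1}}\norm{b(\gamma)} \leq 2\zeta/\mu$, and hence $\norm{A_1 x(\gamma) + b_1} \leq \norm{A_1}\cdot (2\zeta/\mu) + \norm{b_1} \leq 3\zeta/\mu$. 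Combined with $\norm{A_1} \leq 1$ and $\norm{e} \leq \mu\delta/(10\zeta) \leq 1/10$, this gives
\begin{align*}
|q_1(\tilde x) - q_1(x(\gamma))| \leq \norm{e}^2 + 2\cdot \tfrac{3\zeta}{\mu}\cdot \norm{e} \leq \tfrac{1}{100}\delta^2 + \tfrac{3\delta}{5} \leq \delta,
\end{align*}
which establishes $\tilde \nu \in [\nu(\gamma) \pm \delta]$ as required.

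The only real work is choosing the accuracy for conjugate gradient properly: the factor $\mu\delta/(10\zeta)$ is calibrated so that the linear term $\norm{A_1 x(\gamma) + b_1}\cdot \norm{e}$, whose coefficient grows like $\zeta/\mu$, is bounded by a constant fraction of $\delta$. This is the step that I would expect to require the most care — the quadratic term is negligible, but the linear term forces one to pay a factor of $\zeta/\mu$ in the target CG accuracy, which is exactly what produces the $\log(\zeta/(\mu\delta))$ factor in the running time.
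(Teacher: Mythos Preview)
Your proof is correct and follows essentially the same approach as the paper: apply conjugate gradient to obtain $\tilde x$ with $\norm{\tilde x - x(\gamma)}\le \mu\delta/(10\zeta)$, bound $\norm{x(\gamma)}\le 2\zeta/\mu$, and then control $|q_1(\tilde x)-q_1(x(\gamma))|$ via a quadratic perturbation estimate. The only cosmetic difference is that the paper packages the perturbation bound into \cref{lem:quadratic_error} (yielding $5\cdot(2\zeta/\mu)\cdot(\mu\delta/10\zeta)=\delta$ in one line), whereas you expand $e^\top A_1 e + 2e^\top(A_1 x(\gamma)+b_1)$ by hand; the substance is identical.
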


\subsection{Constructing a strongly convex reformulation}

We present and analyze \texttt{ConstructReform} (\cref{alg:construct_reform_gtrs}).
For the sake of presentation, we break \texttt{ConstructReform} into the following parts.

\begin{algorithm}
\caption{\texttt{ConstructReform}}
\label{alg:construct_reform_gtrs}
	Given $(A_0,A_1,b_0,b_1,c_0,c_1)$, $(\xi,\zeta,\hat\gamma)$ and $\epsilon,p\in(0,1)$ satisfying \cref{as:alg_gtrs}
	{\small\begin{enumerate}[itemsep=0pt,topsep=0pt,parsep=0pt]
		\item Set $\gamma_0 =\hat\gamma$, $\mu_0 = \xi$
		\item Set $(x_0, \nu_0) = \texttt{ApproxNu}(\mu_0, \epsilon/(4\zeta), \gamma_0)$
		\item If $\nu_0 + \epsilon/(4\zeta) < 0$, run \texttt{CRLeft} (\cref{alg:construct_reform_left})
		\item Else if $\nu_0 - \epsilon/(4\zeta) > 0$, run \texttt{CRRight}
		\item Else, run \texttt{CRMid} (\cref{alg:constructReformMid})
	\end{enumerate}}
\end{algorithm}

We will say that \texttt{ConstructReform} (similarly, \texttt{CRLeft}, \texttt{CRMid}, and \texttt{CRRight}) \emph{succeeds} if it either outputs:
\begin{itemize}[itemsep=0pt,topsep=0pt,parsep=0pt]
	\item \texttt{"regular"}, $\gamma^{(1)}$, $\gamma^{(2)}$, $\tilde \mu$ such that $\gamma^*\in[\gamma^{(1)},\gamma^{(2)}]$ and $\mu(\gamma^{(i)})\geq \tilde\mu \geq \min(\mu^*,\xi)/8$,
	\item \texttt{"maybe regular"}, $x$ such that $x$ is an $\epsilon$-approximate optimizer, or
	\item \texttt{"not regular"}, $x$ such that $x$ is an $\epsilon$-approximate optimizer.
\end{itemize}

The remainder of this subsection proves the following guarantee.

\begin{proposition}
\label{thm:alg_gtrs_correctness}
Suppose \cref{as:alg_gtrs} holds. 
With probability at least $1-p$, \texttt{ConstructReform} (\cref{alg:construct_reform_gtrs}) succeeds and runs in time
\begin{align*}
\tilde O\left(\frac{N\sqrt{\zeta}}{\sqrt{\phi}}\log\left(\frac{1}{\phi}\right) \log \left(\frac{n}{p}\right)\log\left(\frac{\zeta}{\epsilon\xi}\right)\right).
\end{align*}
\end{proposition}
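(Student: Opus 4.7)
The plan is to prove correctness and the running-time bound by a case analysis on the three subroutines dispatched by \texttt{ConstructReform}. The initial call \texttt{ApproxNu}$(\xi,\epsilon/(4\zeta),\hat\gamma)$ produces, by \cref{lem:approxnu}, an estimate $\nu_0\in[\nu(\hat\gamma)\pm\epsilon/(4\zeta)]$ at cost $\tilde O(N\sqrt{\zeta/\xi}\log(\zeta/\epsilon))$. Since \cref{lem:nu_is_derivative} and the optimality condition from \cref{lem:gtrs_stability} together give $\nu(\gamma^*)=0$ whenever $\mu^*>0$, while \cref{lem:gtrs_nu} ensures $\nu$ is monotone nonincreasing, the signed test on $\nu_0$ reliably localizes $\gamma^*$: $\nu_0+\epsilon/(4\zeta)<0$ forces $\nu(\hat\gamma)<0$ and hence $\gamma^*<\hat\gamma$, triggering \texttt{CRLeft}; the symmetric case triggers \texttt{CRRight}; and the remaining case yields $|\nu(\hat\gamma)|\le\epsilon/(2\zeta)$, triggering \texttt{CRMid}.

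For \texttt{CRLeft} (and \texttt{CRRight} by symmetry) I would argue that the subroutine maintains an invariant of the form $\gamma^*\in[\gamma_\ell,\gamma_r]$ (or else has already returned an $\epsilon$-approximate optimizer) and proceeds by bisection. At each iteration it uses \texttt{ApproxGammaLeft} to snap the midpoint to a $\gamma$ with $\mu(\gamma)\ge\min(\mu^*,\xi)/4$ via \cref{lem:approx_gamma}, then calls \texttt{ApproxNu} at precision $\Theta(\epsilon/\zeta)$ to decide which half of the interval still contains $\gamma^*$. The search terminates either when both endpoints satisfy $\mu(\gamma^{(i)})\ge\min(\mu^*,\xi)/8$ and the interval is short enough to certify regularity (output \texttt{"regular"}), or when the current $x(\gamma)$ already passes an $\epsilon$-approximation test (output \texttt{"not regular"}). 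The quantitative crux is that the working $\mu$ never falls below $\Omega(\phi)$: if $\min(\mu^*,\xi)\ge\phi$ this is by construction, while if $\min(\mu^*,\xi)<\phi$ then $\phi=\epsilon\xi^4/\zeta^4$, and a direct Lagrangian estimate $q_0(x)\le q(\gamma,x)+\gamma|q_1(x)|$ combined with \cref{lem:gtrs_stability} shows that once $|\nu(\gamma)|\le\epsilon/(4\zeta)$ the point $x(\gamma)$, after a small projection to restore feasibility, is already $\epsilon$-approximately optimal.

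For \texttt{CRMid}, the starting bound $|\nu(\hat\gamma)|\le\epsilon/(2\zeta)$ places us close to $\gamma^*$ or on a very flat portion of $\nu$. If $\mu(\hat\gamma)\ge\min(\mu^*,\xi)/8$ we may return \texttt{"regular"} with a narrow bracket around $\hat\gamma$; otherwise, the Lagrangian estimate at $\gamma=\hat\gamma$ combined with the $\epsilon/(4\zeta)$-smallness of $\nu(\hat\gamma)$ yields an $\epsilon$-approximate optimizer for the \texttt{"maybe regular"} output. Summing across all branches, the running time is dominated by $O(\log(1/\phi))$ bisection iterations, each invoking \texttt{ApproxGammaLeft} and \texttt{ApproxNu} at $\mu=\Omega(\phi)$ and $\delta=\Omega(\epsilon/\zeta)$, costing $\tilde O(N\sqrt{\zeta/\phi}\log(n/p)\log(\zeta/(\epsilon\xi)))$ each by \cref{lem:approx_gamma,lem:approxnu}. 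A union bound over these $O(\log(1/\phi))$ randomized calls controls the overall failure probability while contributing only a $\log\log$-factor absorbed into $\tilde O$.

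The main obstacle I anticipate is the \texttt{"maybe regular"} edge case: the algorithm there must certify that returning an approximate optimizer (rather than building a strongly convex reformulation) is correct even though $\mu^*$ is not directly known. The proof has to fuse two inexact ingredients---a small $|\nu(\hat\gamma)|$ from \texttt{ApproxNu} and a possibly small $\mu(\hat\gamma)$---into an $\epsilon$-approximation guarantee using only the lower bound $q_0(x)\ge q(\gamma^*,x)$ afforded by \cref{lem:gtrs_stability}. Calibrating the constants---the factor $8$ in $\min(\mu^*,\xi)/8$, the $\epsilon/(4\zeta)$ tolerance of \texttt{ApproxNu}, and the threshold $\phi\ge\epsilon\xi^4/\zeta^4$---so that the same $\epsilon$-guarantee holds uniformly across all three branches is the bookkeeping heart of the argument.
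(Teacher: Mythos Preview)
Your high-level decomposition---reduce \cref{thm:alg_gtrs_correctness} to the guarantees of \texttt{CRLeft}, \texttt{CRRight}, and \texttt{CRMid}, then union-bound---is exactly what the paper does. But your description of what those subroutines do, and hence how their guarantees are proved, does not match the actual algorithms, and the mismatches are substantive.

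\texttt{CRLeft} is not a bisection on an interval $[\gamma_\ell,\gamma_r]$ that shrinks until ``the interval is short enough.'' It is a geometric search on the level $\mu_t=2^{-t}\xi$: at each $t$ it calls \texttt{ApproxGammaLeft} to produce some $\gamma_t\le\hat\gamma$ with $\mu(\gamma_t)\in[\mu_t/2,\mu_t]$, then tests the sign of $\nu(\gamma_t)$. When it returns \texttt{"regular"} it outputs the bracket $[\gamma_t,\hat\gamma]$ (or $[\gamma',\hat\gamma]$), which may be long; the only thing that matters is that both endpoints have $\mu(\cdot)\ge\mu_t/4$. The crucial running-time fact is that the halting iteration $t$ satisfies $\mu_t\ge\min(\mu^*,\xi)/2$ because $\mu$ is concave and increasing on $(-\infty,\gamma_{t-1}]$, so $\mu^*\le\mu(\gamma_{t-1})\le\mu_{t-1}$. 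Your invariant ``snap the midpoint to a $\gamma$ with $\mu(\gamma)\ge\min(\mu^*,\xi)/4$'' presupposes knowledge of $\mu^*$ that the algorithm does not have.

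The more serious gap is your treatment of the \texttt{"not regular"} output. You write that it occurs ``when the current $x(\gamma)$ already passes an $\epsilon$-approximation test.'' In the paper, \texttt{"not regular"} is returned only after the loop exhausts all $T$ iterations with $\nu(\gamma_T)<0$; at that point $x_T$ is typically infeasible ($q_1(x_T)<0$) and is \emph{not} $\epsilon$-optimal by itself. The algorithm must take a line-search step $x_T+\alpha v_T$ along the approximate null vector $v_T$ of $A(\gamma_T)$ to hit $q_1=0$, and the correctness proof requires bounding $\alpha$ (via $v_T^\top A_1 v_T\ge\xi/(2\zeta)$ and $|q_1(x_T)|\le O(\zeta^2/\xi^2)$) and showing $\alpha^2\mu_T\le\epsilon/2$. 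This is where the threshold $\mu_T\approx\epsilon\xi^4/\zeta^4$ (and hence the definition of $\phi$) actually enters; your Lagrangian estimate alone does not deliver feasibility.

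Finally, in \texttt{CRMid} the branch is not governed by whether $\mu(\hat\gamma)\ge\min(\mu^*,\xi)/8$---that always holds since $\mu(\hat\gamma)\ge\xi$. The test is whether $\nu(\hat\gamma-\xi/2)>0>\nu(\hat\gamma+\xi/2)$; if not, one returns \texttt{"maybe regular"} with $x_0$, whose $\epsilon$-optimality follows from \cref{lem:approx_nu_almost_zero}.
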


\cref{thm:alg_gtrs_correctness} will follow as an immediate corollary to the the corresponding guarantees for \texttt{CRLeft}, \texttt{CRRight}, and \texttt{CRMid}.
The steps and analysis of \texttt{CRRight} are analogous to that of \texttt{CRLeft} and are omitted.

Our algorithms will attempt to binary search for $\gamma^*$ using the sign of $\nu(\gamma)$. Unfortunately, as we can only approximate $\nu(\gamma)$ up to some accuracy, we will need to argue how to handle situations where our approximation of $\nu(\gamma)$ is close to zero.
\begin{lemma}
\label{lem:approx_nu_almost_zero}
Suppose \cref{as:alg_gtrs} holds, $\mu\in(0,\xi]$, $\epsilon\in(0,1)$, and $A(\gamma)\succeq \mu I$. 
Let $(\tilde x, \tilde \nu) = \texttt{ApproxNu}(\mu,\epsilon/(4\zeta),\gamma)$. If $\tilde\nu \in[\pm \epsilon/(4\zeta)]$, then $\tilde x$ is an $\epsilon$-approximate optimizer of \eqref{eq:gtrs}.
\end{lemma}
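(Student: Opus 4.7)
The plan is to show the two required inequalities $q_1(\tilde x) \le \epsilon$ and $q_0(\tilde x) \le \Opt + \epsilon$ separately, using the guarantees of \cref{lem:approxnu} together with weak duality and the strong convexity of $q(\gamma,\cdot)$ on $\inter(\Gamma)$.

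The constraint bound is immediate. Since $A(\gamma)\succeq \mu I\succ 0$, we have $\gamma\in\inter(\Gamma)$, so \cref{lem:approxnu} (invoked with $\delta=\epsilon/(4\zeta)$) yields $\tilde\nu = q_1(\tilde x)$. By hypothesis $\tilde\nu \in [\pm \epsilon/(4\zeta)]$, and since $\zeta\geq 1$, we get $q_1(\tilde x) \leq \epsilon/(4\zeta) \leq \epsilon$.

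For the objective bound, the key idea is the identity $q_0(\tilde x) = q(\gamma,\tilde x) - \gamma\, q_1(\tilde x)$, which lets us convert approximate stationarity of $\tilde x$ for $q(\gamma,\cdot)$ into an objective bound. Using $b(\gamma) = -A(\gamma)x(\gamma)$, a standard completion-of-squares calculation gives $q(\gamma,\tilde x) - \mathbf{d}(\gamma) = (\tilde x - x(\gamma))^\top A(\gamma)(\tilde x - x(\gamma))$. The bound $\norm{\tilde x - x(\gamma)} \le \mu\epsilon/(40\zeta^2)$ from \cref{lem:approxnu} together with $\norm{A(\gamma)} \le 2\zeta$ (see \cref{rem:regularity}) then bounds this quadratic term by $\mu^2\epsilon^2/(800\zeta^3)$, which is $\le \epsilon/2$ since $\mu\le\xi\le 1$, $\zeta\ge 1$, and $\epsilon < 1$. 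Combining with weak duality $\mathbf{d}(\gamma)\leq \Opt$ (from \eqref{eq:opt_equals_sup_inf}), we get $q(\gamma,\tilde x) \leq \Opt + \epsilon/2$.

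It remains to control the $-\gamma\,q_1(\tilde x)$ term. Here I use $\gamma\in\Gamma$, so $\gamma\leq \gamma_+\leq \zeta$ by \cref{as:alg_gtrs}, and $q_1(\tilde x) = \tilde\nu \geq -\epsilon/(4\zeta)$ by hypothesis, giving $-\gamma\, q_1(\tilde x) \leq \gamma\cdot\epsilon/(4\zeta) \leq \epsilon/4$. Combining, $q_0(\tilde x) \leq \Opt + \epsilon/2 + \epsilon/4 \leq \Opt + \epsilon$.

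There is no real obstacle here; the only thing to be careful about is keeping track of the two separate effects that make $\tilde x$ an approximate optimizer: $\tilde x$ is an approximate unconstrained minimizer of the strongly convex $q(\gamma,\cdot)$, and simultaneously $\tilde\nu$ is small so that the Lagrangian penalty term $\gamma\tilde\nu$ contributes only $O(\epsilon)$ even after scaling by $\gamma$, which is controlled by the a priori bound $\gamma\leq\zeta$.
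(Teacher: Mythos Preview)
Your proof is correct and rests on the same ingredients as the paper's---the \texttt{ApproxNu} guarantees from \cref{lem:approxnu}, weak duality $\mathbf{d}(\gamma)\leq\Opt$, and the a priori bound $\gamma\leq\gamma_+\leq\zeta$---but you organize them more directly. The paper first establishes that $x(\gamma)$ itself is an $\epsilon/2$-approximate optimizer (via $q_0(x(\gamma))=\mathbf{d}(\gamma)-\gamma\nu(\gamma)$ and $\abs{\nu(\gamma)}\leq\epsilon/(2\zeta)$), and only then transfers the bounds to $\tilde x$ by invoking the generic perturbation estimate \cref{lem:quadratic_error}, which in turn requires the bound $\norm{x(\gamma)}\leq 2\zeta/\mu$. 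You skip this intermediate step entirely: for $q_1$ you read off $q_1(\tilde x)=\tilde\nu$ directly from \cref{lem:approxnu}, and for $q_0$ you use the exact completion-of-squares identity $q(\gamma,\tilde x)-\mathbf{d}(\gamma)=(\tilde x-x(\gamma))^\top A(\gamma)(\tilde x-x(\gamma))$ rather than a generic Lipschitz-type bound. Your route is slightly cleaner---it avoids bounding $\norm{x(\gamma)}$ and does not invoke \cref{lem:quadratic_error}---at the cost of being specific to this quadratic setting, whereas the paper's two-step ``exact point then perturb'' pattern is the same template reused elsewhere (e.g., in \cref{prop:Case1_time}).
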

\begin{proof}
By \cref{lem:approxnu}, we have that $q_1(x(\gamma)) = \nu(\gamma) \in [\tilde \nu \pm \epsilon/(4\zeta)]\subseteq [\pm \epsilon/(2\zeta)]$ where the last containment follows from $\tilde\nu \in[\pm \epsilon/(4\zeta)]$ in the premise of the lemma. Also, note that
\begin{align*}
q_0(x(\gamma))= q(\gamma, x(\gamma)) - \gamma \nu(\gamma) \leq \Opt + \epsilon/2.
\end{align*}
Here, the inequality follows from the bounds $\nu(\gamma) \in[\pm\epsilon/(2\zeta)]$, $\gamma\leq \gamma_+\leq \zeta$ (as $A(\gamma)\succeq 0$ we have $\gamma\in\Gamma$ and \cref{as:alg_gtrs} ensures $\gamma_+\leq \zeta$), and $q(\gamma,x(\gamma)) = \mathbf{d}(\gamma)\leq \Opt$.
Thus, we deduce that $x(\gamma)$ is an $\epsilon/2$-approximate optimizer.

Next, by \cref{lem:approxnu}, we have $\norm{x(\gamma) - \tilde x} \leq \epsilon \mu/(40\zeta^2)$. 
Note that $\norm{x(\gamma)}=\norm{-A(\gamma)^{-1}b(\gamma)}\leq \norm{A(\gamma)^{-1}}\norm{b(\gamma)}\leq 2\zeta/\mu$ where the last inequality follows from $A(\gamma)\succeq \mu I$ and $\norm{b(\gamma)}\leq 2\zeta$ (implied by \cref{rem:regularity}).
Considering \cref{as:alg_gtrs} and applying \cref{lem:quadratic_error} with the bounds $\norm{x(\gamma)}\leq 2\zeta/\mu$ and $\norm{x(\gamma) - \tilde x} \leq \epsilon \mu/(40\zeta^2)$, we arrive at 
\begin{gather*}
	q_0(\tilde x) \leq q_0(x(\gamma)) + 5\epsilon \frac{\mu}{40\zeta^2}\frac{2\zeta}{\mu} \leq \Opt + \frac{\epsilon}{2} + \frac{\epsilon}{4\zeta}\leq \Opt + \epsilon\\
	q_1(\tilde x) \leq q_1(x(\gamma)) + 5\epsilon \frac{\mu}{40\zeta^2}\frac{2\zeta}{\mu} \leq \frac{\epsilon}{2} + \frac{\epsilon}{4\zeta}\leq \epsilon.\qedhere
\end{gather*}
\end{proof}

\begin{remark}
In contrast to the TRS setting,
where it is possible to show that $\nu(\gamma)$ ``grows quickly'' around $\gamma^*$,
in the GTRS setting, $\nu(\gamma)$ may be ``arbitrarily flat''.
In particular, it may not be possible to determine the sign of $\nu(\gamma)$ given only an inaccurate estimate.
Correspondingly, \texttt{ConstructReform} may
fail to differentiate between \texttt{"regular"} and \texttt{"not regular"} instances and return \texttt{"maybe regular"}.
In view of \cref{rem:construct_reform_only_once}, we will think of \texttt{"maybe regular"} outputs as being less desirable than \texttt{"regular"} outputs.
We will explore this issue in further detail in \cref{sec:maybe_regular} and show that \texttt{ConstructReform} does not output \texttt{"maybe regular"} as long as the GTRS instance satisfies a \emph{coherence} condition.\ije{\hfill\proofbox}{}
\end{remark}

\subsubsection{Analysis of \texttt{CRLeft}}
\label{subsec:construct_reform_left}

\cref{alg:construct_reform_gtrs} calls \texttt{CRLeft} if $\nu_0 + \epsilon/4\zeta < 0$. Note that in this case, from \cref{lem:approxnu} we have $\nu(\hat\gamma) = \nu(\gamma_0) \in [ \nu_0 \pm \epsilon/(4\zeta)]$ which implies $\nu(\hat\gamma)  < 0$.

\begin{algorithm}
\caption{\texttt{CRLeft}}
	\label{alg:construct_reform_left}
	{\small\begin{enumerate}[itemsep=0pt,topsep=0pt,parsep=0pt]
			\item Let $T \coloneqq \ceil{\log\left(\tfrac{3200\zeta^4}{\epsilon\xi^3}\right)}$. For $t = 1,\dots,T$,
			\begin{enumerate}[itemsep=0pt,topsep=0pt,parsep=0pt]
				\item Set $\mu_t = 2^{-t}\xi$
				\item Set $(\gamma_t, v_t) = \texttt{ApproxGammaLeft}(\mu_t, p/T)$
				\item Set $(x_t,\nu_t) = \texttt{ApproxNu}(\mu_t/2, \epsilon/(4\zeta),\gamma_t)$
				\item If $\nu_t - \epsilon/(4\zeta) > 0$, return \texttt{"regular"}, $\gamma_{t}$, $\hat\gamma$, $\mu_t/4$
				\item Else if $\nu_t\in[-\epsilon/(4\zeta),\, \epsilon/(4\zeta)]$
				\begin{enumerate}[itemsep=0pt,topsep=0pt,parsep=0pt]
					\item Set $\gamma' \coloneqq \gamma_t - \mu_t/4$
					\item Set $(x', \nu') = \texttt{ApproxNu}(\mu_t/4,\epsilon/(4\zeta), \gamma')$
					\item If $\nu' - \epsilon /(4\zeta) > 0$, return \texttt{"regular"}, $\gamma'$, $\hat\gamma$, $\mu_t/4$
					\item Else, return \texttt{"maybe regular"}, $x_t$
				\end{enumerate}
			\end{enumerate}
			\item If necessary, negate $v_T$ so that $\ip{v_T, A(\gamma_T) x_T + b(\gamma_T)} \leq 0$. Let $\alpha>0$ such that $q_1(x_T+\alpha v_T) = 0$, return \texttt{"not regular"}, $x_T+\alpha v_T$.
		\end{enumerate}}
\end{algorithm}

\begin{proposition}
\label{prop:nu_hat_gamma_positive_correctness}
Suppose \cref{as:alg_gtrs} holds.
	With probability at least $1 - p$, \texttt{CRLeft} (\cref{alg:construct_reform_left}) succeeds and runs in time
	\begin{align*}
	\tilde O\left(\tfrac{N\sqrt{\zeta}}{\sqrt{\phi}}\log\left(\tfrac{1}{\phi}\right)\log\left(\tfrac{n}{p}\right)\log\left(\tfrac{\zeta}{\epsilon\xi}\right)\right).
	\end{align*}
\end{proposition}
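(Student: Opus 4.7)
The plan is to verify correctness for each of \texttt{CRLeft}'s three possible return branches (\texttt{"regular"}, \texttt{"maybe regular"}, \texttt{"not regular"}) and then bound the running time. All probabilistic statements are handled by union-bounding over the at most $T$ calls to \texttt{ApproxGammaLeft}, each invoked with failure probability $p/T$, so that the guarantee of \cref{lem:approx_gamma} holds simultaneously at every iteration with probability at least $1-p$; calls to \texttt{ApproxNu} are deterministic. On entering \texttt{CRLeft}, $\nu_0 + \epsilon/(4\zeta) < 0$ combined with \cref{lem:approxnu} gives $\nu(\hat\gamma) < 0$, and by \cref{lem:gtrs_nu}, $\nu$ is then strictly decreasing on $\inter(\Gamma)$ (a constant $\nu \equiv \nu(\hat\gamma) < 0$ would contradict primal feasibility under \cref{as:alg_gtrs}).

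For the \texttt{"regular"} branch at step~1(d), the test $\nu_t - \epsilon/(4\zeta) > 0$ combined with \cref{lem:approxnu} gives $\nu(\gamma_t) > 0 > \nu(\hat\gamma)$, so by strict monotonicity $\gamma^* \in (\gamma_t, \hat\gamma)$. The endpoint eigenvalue bounds $\mu(\gamma_t) \geq \mu_t/2$ (from \cref{lem:approx_gamma}) and $\mu(\hat\gamma) \geq \xi$ (from \cref{as:alg_gtrs}) exceed $\tilde\mu = \mu_t/4$. Concavity of $\mu(\gamma) = \lambda_{\min}(A_0+\gamma A_1)$ forces $\mu^* \geq \min(\mu(\gamma_t), \mu(\hat\gamma)) \geq \mu_t/2$, hence $\tilde\mu \geq \min(\mu^*, \xi)/8$. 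Step~1(e)iii is analogous, additionally invoking the fact that $\mu$ is $1$-Lipschitz (since $\norm{A_1} \leq 1$) to conclude $\mu(\gamma') \geq \mu(\gamma_t) - \mu_t/4 \geq \mu_t/4$. The \texttt{"maybe regular"} branch at step~1(e)iv is immediate from \cref{lem:approx_nu_almost_zero} applied with $\mu = \mu_t/2$.

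The main obstacle is the \texttt{"not regular"} branch at step~2, where we must certify that $y \coloneqq x_T + \alpha v_T$ is $\epsilon$-approximate. Since each earlier iteration fell through, $\nu_T < -\epsilon/(4\zeta)$, so $\nu(\gamma_T) < 0$ and $q_1(x_T) = \nu_T < 0$ by \cref{lem:approxnu}. I will first show that $\alpha > 0$ with $q_1(x_T + \alpha v_T) = 0$ exists via a case analysis on the scalar quadratic in $\alpha$, which starts at $\nu_T < 0$ at $\alpha = 0$; the delicate case $v_T^\top A_1 v_T \leq 0$ is handled using the hard-case structure implied by $\mu_T$ being exponentially small (so that $v_T$ is essentially a null vector of $A(\gamma_T)$). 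Then $q_1(y) = 0 \leq \epsilon$, and since $q_1(y) = 0$, $q_0(y) = q(\gamma_T, y)$. Expanding
\begin{align*}
q(\gamma_T, y) = q(\gamma_T, x_T) + 2\alpha\,\ip{A(\gamma_T) x_T + b(\gamma_T), v_T} + \alpha^2\, v_T^\top A(\gamma_T) v_T,
\end{align*}
the cross-term is nonpositive by the sign choice on $v_T$; the curvature coefficient is at most $\mu_T$ by \cref{lem:approx_gamma}; and $q(\gamma_T, x_T) \leq \mathbf{d}(\gamma_T) + O(\epsilon/\zeta) \leq \Opt + O(\epsilon/\zeta)$ follows from \cref{lem:approxnu} and \cref{lem:quadratic_error} using $\norm{x(\gamma_T)} \leq 2\zeta/\mu_T$. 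Bounding $\alpha^2 \mu_T = O(\epsilon)$ then follows by solving the scalar quadratic $q_1(x_T + \alpha v_T) = 0$ under the norm bounds of \cref{as:alg_gtrs} and \cref{rem:regularity}, and combining with $\mu_T \leq \epsilon\xi^4/(3200\zeta^4)$ guaranteed by the choice of $T$.

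For the running time, each iteration costs $\tilde O(N\sqrt{\zeta/\mu_t}\log(n/p)\log(\zeta/(\mu_t\epsilon)))$ by \cref{lem:approx_gamma,lem:approxnu}, where $\mu_t = 2^{-t}\xi$ halves each iteration. Hence $\sum_{t=1}^{T_{\text{stop}}} 1/\sqrt{\mu_t}$ is a geometric series dominated by $1/\sqrt{\mu_{T_{\text{stop}}}}$. The worst case is $T_{\text{stop}} = T$, where $\mu_{T_{\text{stop}}} = \mu_T \geq \epsilon\xi^4/(6400\zeta^4) = \Omega(\phi)$ (since $\phi \geq \epsilon\xi^4/\zeta^4$ by definition); if the algorithm terminates earlier, $\mu_{T_{\text{stop}}}$ is only larger. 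In all cases $\mu_{T_{\text{stop}}} = \Omega(\phi)$ and $T = O(\log(\zeta/(\epsilon\xi)))$, so substituting yields the claimed bound after absorbing logarithmic factors.
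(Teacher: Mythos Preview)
Your argument for the \texttt{"regular"} branch has the key inequality backwards. You correctly observe that concavity of $\mu$ and $\gamma^*\in(\gamma_t,\hat\gamma)$ yield $\mu^*\ge\min(\mu(\gamma_t),\mu(\hat\gamma))\ge\mu_t/2$, but from this you conclude $\tilde\mu=\mu_t/4\ge\min(\mu^*,\xi)/8$. That implication fails: showing $\mu^*\ge\mu_t/2$ is an \emph{upper} bound on $\mu_t$ in terms of $\mu^*$, whereas the success criterion demands a \emph{lower} bound $\mu_t\ge\min(\mu^*,\xi)/2$. Nothing in your argument rules out $\mu_t\ll\mu^*$. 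The paper closes this gap by using information from the \emph{previous} iteration: since the algorithm did not terminate at step $t-1$, one has $\nu(\gamma_{t-1})<0$, hence $\gamma^*\le\gamma_{t-1}$; combined with concavity of $\mu$ and the guarantee $\mu(\gamma_{t-1})\le\mu_{t-1}$ from \texttt{ApproxGammaLeft}, this gives $\mu^*\le\mu(\gamma_{t-1})\le\mu_{t-1}=2\mu_t$, which is the direction you need. The same gap propagates into your running-time analysis: the claim ``in all cases $\mu_{T_{\mathrm{stop}}}=\Omega(\phi)$'' is unsupported when $\phi=\min(\mu^*,\xi)\gg\epsilon\xi^4/\zeta^4$, since knowing only $\mu_{T_{\mathrm{stop}}}\ge\mu_T=\Omega(\epsilon\xi^4/\zeta^4)$ is too weak.

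Your treatment of line~2 also has gaps. First, the case $v_T^\top A_1 v_T\le 0$ that you flag as ``delicate'' does not occur: since $\gamma_T<\hat\gamma$, one has $v_T^\top A_1 v_T=\tfrac{v_T^\top A(\hat\gamma)v_T-v_T^\top A(\gamma_T)v_T}{\hat\gamma-\gamma_T}\ge\tfrac{\xi-\mu_T}{\zeta}>0$, so no case split is needed. Second, and more seriously, bounding $\alpha^2\mu_T=O(\epsilon)$ requires an upper bound on $\alpha$ that is independent of $\mu_T$; the naive estimate $\norm{x(\gamma_T)}\le 2\zeta/\mu_T$ you invoke feeds into the linear coefficient of the scalar quadratic and would give $\alpha=O(\zeta/\mu_T)$, making $\alpha^2\mu_T$ blow up. The paper instead derives $\norm{x(\gamma_T)}\le 5\zeta/\xi$ via the chain $\xi\norm{x(\gamma_T)}^2-4\zeta\norm{x(\gamma_T)}-2\zeta\le q(\hat\gamma,x(\gamma_T))\le q(\gamma_T,x(\gamma_T))\le\Opt\le 2\zeta$, which is the ingredient you are missing.
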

\begin{proof}
We condition on step 1.(b) of \texttt{CRLeft} succeeding in every iteration. This happens with probability at least $1-p$.

We begin with the running time. Note that by \cref{lem:approxnu,lem:approx_gamma} and $\mu_t = 2^{-t}\xi$ (from step 1.(a)), iteration $t$ of line 1 runs in time
\begin{align*}
\tilde O\left(\tfrac{N\sqrt{\zeta}}{\sqrt{\mu_t}}\log\left(\tfrac{n}{p}\right)\log\left(\tfrac{\zeta}{\epsilon\xi}\right)\right).
\end{align*}
It suffices then to show that $\mu_t = \Omega(\phi)$ in every iteration before \texttt{CRLeft} outputs.
Noting that $\mu_t \geq \mu_T = \Omega(\epsilon\xi^4/\zeta^4)$, we may instead show that $\mu_t = \Omega(\max(\mu^*,\xi))$ in the iteration at which \texttt{CRLeft} outputs.

It remains to show that the output of \texttt{CRLeft} satisfies the success criteria and that $\mu_t = \Omega(\max(\mu^*,\xi))$ for the iteration $t$ at which \texttt{CRLeft} outputs.
We split the remainder of the proof into three parts depending on which line \texttt{CRLeft} returns on.

\paragraph{Case 1: \texttt{CRLeft} terminates on either line 1.(d) or 1.(e).iii in iteration $t$}

Let $\tilde\gamma \coloneqq \gamma_t$ in the first case and $\tilde \gamma\coloneqq \gamma'$ in the second.
As \texttt{CRLeft} did not terminate at time $t -1$, we have that $\nu(\gamma_{t-1}) < 0$.
Indeed, if $\nu(\gamma_{t-1})\geq 0$, then $\nu_{t-1}\geq - \epsilon/4\zeta$ by \cref{lem:approxnu}.
Then, $\nu(\tilde\gamma) > 0 > \nu(\gamma_{t-1})$.
We deduce by the fact that $\mathbf{d}(\gamma)$ is concave and \cref{lem:nu_is_derivative} that $\gamma^*\in[\tilde\gamma,\gamma_{t-1}]\subseteq [\tilde\gamma,\hat\gamma]$.
By construction in line 1.(b), we have that $\mu(\tilde\gamma) \geq \mu_t /4$.

It remains to show that $\mu_t \geq \min(\mu^*,\xi)/2$.
This holds if $t = 1$, as then $\mu_1 = \xi/2$ by line 1.(a).
On the other hand, if $t > 1$, then $\mu(\gamma)$ is an increasing function on the interval $(\infty, \gamma_{t-1}]$. Indeed, this follows as $\gamma_{t-1} \leq \hat\gamma$, $\mu(\gamma_{t-1}) \leq \xi/2 < \mu(\hat\gamma)$, and $\mu(\gamma)=\lambda_{\min}(A_0+\gamma A_1)$ is a concave function of $\gamma$. Then, from $\gamma^*\in[\tilde\gamma,\gamma_{t-1}]$, we deduce that
\begin{align*}
\mu^* = \mu(\gamma^*) \leq  \mu(\gamma_{t-1}) \leq \mu_{t-1} = 2\mu_t,
\end{align*}
where the last inequality follows from line 1.(b).

\paragraph{Case 2: \texttt{CRLeft} terminates on line 1.(e).iv in iteration $t$}

In this case, we have that $(x_t,\nu_t) = \texttt{ApproxNu}(\mu_t/2, \epsilon/(4\zeta), \gamma_t)$ satisfies $\nu_t \in[\pm \epsilon/(4\zeta)]$. By \cref{lem:approx_nu_almost_zero}, we have that $x_t$ is an $\epsilon$-approximate optimizer.
It remains to note that the second paragraph of Case 1 holds in this case verbatim so that $\mu_t \geq \min(\mu^*,\xi)/2$.

\paragraph{Case 3: \texttt{CRLeft} terminates on line 2}
Note that $q_1(x_T) = \nu_T < 0$ holds by line 1.(c),  \cref{lem:approxnu}, and the fact that \texttt{CRLeft} did not terminate in a prior line. Furthermore,
\begin{align*}
v_T^\top A_1 v_T = v_T^\top\left(\frac{A(\hat\gamma) - A(\gamma_T)}{\hat\gamma - \gamma_T}\right)v_T \geq \frac{\xi - \mu_T}{\zeta}\geq \frac{\xi}{2\zeta}>0,
\end{align*}
where the first inequality follows from $\zeta\geq \hat \gamma$ (by \cref{as:alg_gtrs}), $v_T^\top A(\gamma_T) v_T \leq \mu_T$ (by line 1.(b) and \cref{lem:approx_gamma}) and $v_T^\top A(\hat \gamma) v_T = v_T^\top A_0 v_T + \hat \gamma \geq \xi$ (by \cref{as:alg_gtrs} and $\hat\gamma\geq0$), and the second inequality follows from $\mu_T=2^{-T}\xi$ by line 1(a).
This then implies that $\alpha$ in line 2 is well-defined. Thus, by construction in line 2, $q_1(x_T+\alpha v_T) = 0$.
Our goal is to show that
\begin{align*}
q_0(x_T+\alpha v_T) &= q(\gamma_T, x_T + \alpha v_T) \leq q(\gamma_T, x_T) + \alpha^2 \mu_T \leq \Opt + \epsilon.
\end{align*}

The following sequence of inequalities allows us to bound $\norm{x(\gamma_T)}$:
\begin{align*}
\xi \norm{x(\gamma_T)}^2 - 4\zeta \norm{x(\gamma_T)} - 2\zeta \leq q(\hat\gamma, x(\gamma_T)) \leq q(\gamma_T,x(\gamma_T)) \leq \Opt.
\end{align*}
Here, the first inequality follows from $A(\hat\gamma)\succeq \xi I$, $\norm{b(\hat\gamma)}\leq 2\zeta$ and $\abs{c(\hat\gamma)}\leq 2\zeta$, the second inequality follows as $0\geq \nu(\gamma_T) =q_1(x(\gamma_T))$ (by line 1.(c), \cref{lem:approxnu} and the fact that \texttt{CRLeft} terminates on line 2) and $\hat\gamma\geq \gamma_T$, the third inequality follows as $q(\gamma_T, x(\gamma_T)) = \mathbf{d}(\gamma_T)\leq\Opt$ (by \cref{lem:nu_is_derivative}). Then, taking $x = 0$ in the expression $\Opt = \inf_x\sup_{\gamma\in\Gamma}q(\gamma,x)$ gives $\Opt \leq 2\zeta$. Applying \cref{lem:quadraticRootsBound} to $\xi \norm{x(\gamma_T)}^2 - 4\zeta \norm{x(\gamma_T)} - 4\zeta \leq 0$ gives $\norm{x(\gamma_T)} \leq (2\sqrt{2}+2)\zeta/\xi\leq5\zeta/\xi$, and by \cref{as:alg_gtrs} and line 1.(c) we have $\norm{A_1x_T + b_1}\leq \norm{A_1}\left(\norm{x(\gamma_T)} + \norm{x_T - x(\gamma_T)}\right) + \norm{b_1} \leq (5\zeta/\xi + 1)+1\leq 7\zeta/\xi$.

Next, we may bound
\begin{align*}
q(\gamma_T, x_T) &\leq q(\gamma_T, x(\gamma_T)) + \norm{A(\gamma_T)}\norm{x(\gamma_T) - x_T}^2\\
&\leq \Opt + (2\zeta)\left(\frac{\mu_T\epsilon}{80\zeta^2}\right)^2 \leq \Opt + \epsilon/2.
\end{align*}

Similarly, $\nu(\gamma_T) \geq \nu(\hat\gamma) = q_1(x(\hat\gamma)) \geq -\norm{x(\hat\gamma)}^2 - 2 \norm{x(\hat\gamma)} - 1\geq  - (3\zeta/\xi)^2$, where the first inequality follows from \cref{lem:gtrs_nu} and the last from the bound $\norm{x(\hat\gamma)} \leq 2\zeta/\xi$. We deduce that $0 \geq q_1(x_T) \geq \nu(\gamma_T) - \epsilon/(4\zeta) \geq -10\zeta^2/\xi^2$.
By line 2 and applying \cref{lem:quadraticRootsBound}, we have that $\alpha \leq 40 \zeta^2/\xi^2$.

We conclude that $\alpha^2 \mu_T \leq \alpha^2 \frac{\epsilon\xi^4}{3200 \zeta^4} \leq \frac{\epsilon}{2}$ so that $q_0(x_T+\alpha v_T) = q(\gamma_T, x_T+\alpha v_T) \leq q(\gamma_T,x_T) + \alpha^2\mu_T\leq \Opt + \epsilon$, where the equation follows from the definition of $\alpha$ in line 2.

It remains to note that as $\nu(\gamma_T) < 0$, \cref{lem:gtrs_nu} implies $\gamma^* \leq \gamma_T$ and $\mu^* = \mu(\gamma^*) \leq \mu(\gamma_T) \leq \mu_T$.
\end{proof}

\subsubsection{Analysis of \texttt{CRMid}}
\label{subsec:construct_reform_mid}

\cref{alg:construct_reform_gtrs} calls \texttt{CRMid} if $\nu_0 \in[-\epsilon/(4\zeta),\, \epsilon/(4\zeta)]$. Note that in this case, we may deduce $\abs{\nu(\hat\gamma)} = \abs{\nu(\gamma_0)} \leq \epsilon/(2\zeta)$.

\begin{algorithm}
	\caption{\texttt{CRMid}}
	\label{alg:constructReformMid}
	\leavevmode
	{\small\begin{enumerate}[itemsep=0pt,topsep=0pt,parsep=0pt]
		\item Let $\gamma' \coloneqq \gamma_0 - \xi/2$ and $\gamma'' \coloneqq \gamma_0 + \xi/2$
		\item Set $(x',\nu') = \texttt{ApproxNu}(\gamma', \epsilon/(4\zeta))$
		\item Set $(x'',\nu'') = \texttt{ApproxNu}(\gamma'', \epsilon/(4\zeta))$
		\item If $\nu'-\epsilon/(4\zeta) >0 > \nu''+\epsilon/(4\zeta)$, return \texttt{"regular"}, $\gamma'$, $\gamma''$, $\xi/2$
		\item Else if $\nu'-\epsilon/(4\zeta)\leq 0$, return \texttt{"maybe regular"}, $x_0$
		\item Else, return \texttt{"maybe regular"}, $x_0$
	\end{enumerate}}
\end{algorithm}

\begin{proposition}
\label{prop:nu_hat_gamma_zero_correctness}
	Suppose \cref{as:alg_gtrs} holds. Then, \texttt{CRMid} (\cref{alg:constructReformMid}) succeeds and runs in time
	\begin{align*}
	O\left(\tfrac{N\sqrt{\zeta}}{\sqrt{\xi}}\log\left(\tfrac{\zeta}{\epsilon\xi}\right)\right).
	\end{align*}
\end{proposition}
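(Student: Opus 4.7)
My plan is a straightforward case analysis paired with a bound on the two calls to \texttt{ApproxNu}. First, I would verify the positive definiteness preconditions for \texttt{ApproxNu} at both $\gamma' = \hat\gamma - \xi/2$ and $\gamma'' = \hat\gamma + \xi/2$. Since $A(\hat\gamma) \succeq \xi I$ by \cref{as:alg_gtrs} and $\norm{A_1} \leq 1$, we get $A(\gamma'), A(\gamma'') \succeq A(\hat\gamma) - (\xi/2) \norm{A_1} I \succeq (\xi/2) I$. (In particular, $\gamma', \gamma'' \in \inter(\Gamma)$.) Thus both calls may be invoked at $\mu = \xi/2$, and \cref{lem:approxnu} gives per-call cost $O(N\sqrt{\zeta/\xi}\,\log(\zeta/(\xi\epsilon)))$, which is the stated bound. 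Because \texttt{CRMid} does not call \texttt{ApproxGammaLeft} (the only randomized subroutine), the conclusion is deterministic and no failure probability needs to be accumulated.

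Next I would handle correctness by cases on the exit line. If the algorithm terminates on line 4, then the triggering condition $\nu' - \epsilon/(4\zeta) > 0 > \nu'' + \epsilon/(4\zeta)$ combined with the accuracy guarantee from \cref{lem:approxnu} yields $\nu(\gamma') > 0 > \nu(\gamma'')$. Because $\nu = \tfrac{d}{d\gamma}\mathbf{d}$ on $\inter(\Gamma)$ by \cref{lem:nu_is_derivative} and $\mathbf{d}$ is concave, any maximizer of $\mathbf{d}$ must lie in $[\gamma', \gamma'']$; moreover, $\nu$ cannot be constant on $\inter(\Gamma)$ since $\nu(\gamma') \ne \nu(\gamma'')$, so by \cref{cor:existence_optimizers} and \cref{lem:gtrs_nu}, $\gamma^*$ is the unique maximizer and satisfies $\gamma^* \in [\gamma', \gamma'']$. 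The returned $\tilde\mu = \xi/2$ satisfies $\mu(\gamma'), \mu(\gamma'') \geq \tilde\mu$ by the argument in the previous paragraph, and trivially $\tilde\mu = \xi/2 \geq \min(\mu^*,\xi)/8$, so the \texttt{"regular"} success criteria are met.

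If \texttt{CRMid} terminates on line 5 or line 6, it returns $x_0$, which was produced in step 2 of \cref{alg:construct_reform_gtrs} as $(x_0, \nu_0) = \texttt{ApproxNu}(\xi, \epsilon/(4\zeta), \hat\gamma)$. The condition that caused \texttt{CRMid} to be called at all (as noted immediately before the algorithm box) is exactly $\nu_0 \in [\pm \epsilon/(4\zeta)]$, and $A(\hat\gamma) \succeq \xi I$ holds by \cref{as:alg_gtrs}. Therefore \cref{lem:approx_nu_almost_zero}, applied with $\mu = \xi$ and $\gamma = \hat\gamma$, directly certifies that $x_0$ is an $\epsilon$-approximate optimizer, meeting the \texttt{"maybe regular"} success criterion.

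There is no significant obstacle in this proof: the algorithm is engineered so that each exit line matches exactly the hypotheses of a pre-established lemma. The only careful step is the opening verification that $A(\gamma')$ and $A(\gamma'')$ are lower-bounded by $(\xi/2) I$, which is needed both to invoke \texttt{ApproxNu} legitimately and to certify $\tilde\mu = \xi/2$ as a valid strong-convexity parameter in the \texttt{"regular"} case.
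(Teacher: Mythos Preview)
Your proposal is correct and follows essentially the same approach as the paper: a case split on the exit line, with \cref{lem:approxnu} handling line~4 via $\nu(\gamma')>0>\nu(\gamma'')$ and \cref{lem:approx_nu_almost_zero} handling lines~5--6. The only cosmetic difference is that you bound $A(\gamma'),A(\gamma'')\succeq(\xi/2)I$ directly from $\norm{A_1}\leq 1$, whereas the paper phrases the same fact as ``$\mu$ is $1$-Lipschitz''; your version is arguably more careful since it explicitly checks the preconditions for the \texttt{ApproxNu} calls and the $\tilde\mu\geq\min(\mu^*,\xi)/8$ criterion, both of which the paper leaves implicit.
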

\begin{proof}
Suppose \texttt{CRMid} returns on line 4. Then, by \cref{lem:approxnu} and lines 2 and 3 we have $\nu(\gamma') > 0 > \nu(\gamma'')$. We deduce by the fact that $\mathbf{d}(\gamma)$ is concave and \cref{lem:nu_is_derivative}, that $\gamma^*\in[\gamma',\gamma'']$.
Furthermore, $\mu(\hat\gamma \pm \xi/2) \geq \mu(\hat\gamma) - \xi/2\geq \xi/2$ as $\mu$ is $1$-Lipschitz and $\mu(\hat\gamma)\geq \xi$.

If, \texttt{CRMid} returns on lines 5 or 6, then $(x_0, \nu_0) = \texttt{ApproxNu}(\mu_0, \epsilon/(4\zeta), \gamma_0)$ satisfies $\nu_0\in[\pm \epsilon/(4\zeta)]$. By \cref{lem:approx_nu_almost_zero}, we have that $x_0$ is an $\epsilon$-approximate optimizer.

The running time of \texttt{CRMid} follows from \cref{lem:conjugate_grad}.
\end{proof}

\subsection{Solving the convex reformulation}
\label{sec:solving_reformulations}

\begin{algorithm}
	\caption{\texttt{SolveRegular}}
	\label{alg:solveRegular}
	Given $\gamma^{(1)},\gamma^{(2)},\tilde \mu$ such that $\gamma^*\in[\gamma^{(1)},\gamma^{(2)}]$ and $\min_{i\in[2]}\set{\mu(\gamma^{(i)})}\geq \tilde\mu >0$
	{\small \begin{enumerate}[parsep=0pt,topsep=0pt,itemsep=0pt]
		\item Apply Nesterov's accelerated minimax scheme for strongly convex smooth quadratic functions to compute a $\tilde\mu \left(\epsilon\tilde\mu/10\zeta\right)^2$-optimal solution $\bar x$ to
		\begin{align*}
		\min_{x\in\R^n}\max\left(q(\gamma^{(1)},x), q(\gamma^{(2)},x)\right)
		\end{align*}
		\item Return $\bar x$
	\end{enumerate}}
\end{algorithm}

\begin{proposition}
\label{prop:Case1_time}
Suppose \cref{as:alg_gtrs} holds and $\tilde \mu \in(0,\xi]$.
Then, \texttt{Solve\-Regular} (\cref{alg:solveRegular}) computes an $\epsilon$-approximate solution to \eqref{eq:gtrs} in time
\begin{align*}
O\left(\tfrac{N\sqrt{\zeta}}{\sqrt{\tilde\mu}} \log\left(\tfrac{\zeta}{\epsilon\tilde\mu}\right)\right).
\end{align*}
\end{proposition}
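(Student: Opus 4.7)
The plan is to combine the stability result in \cref{lem:gtrs_stability} with the complexity bound for Nesterov's accelerated minimax scheme (\cref{lem:acc_minimax}) and the quadratic-perturbation bound (\cref{lem:quadratic_error}, applied as in the proof of \cref{lem:approx_nu_almost_zero}). The key structural observations are: (i) under the input hypotheses the objective
\[
F(x) \coloneqq \max\bigl(q(\gamma^{(1)},x),\,q(\gamma^{(2)},x)\bigr)
\]
is $2\tilde\mu$-strongly convex and $O(\zeta)$-smooth, since each Hessian $\grad^2 q(\gamma^{(i)},x) = 2A(\gamma^{(i)})$ satisfies $2\tilde\mu I \preceq 2A(\gamma^{(i)}) \preceq 4\zeta I$ (using \cref{rem:regularity} and the hypothesis $\mu(\gamma^{(i)})\geq \tilde\mu$); and (ii) by \cref{lem:gtrs_stability} the unique minimizer of $F$ is $x^* = x(\gamma^*)$ and moreover $q_0(x^*) = \Opt$ and $q_1(x^*) = 0$.

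For the running time, I would apply \cref{lem:acc_minimax} directly to compute a $\tilde\mu(\epsilon\tilde\mu/10\zeta)^2$-suboptimal point $\bar x$ for $F$. Each iteration of the accelerated scheme is dominated by a constant number of matrix-vector products against $A_0$ and $A_1$ at cost $O(N)$, and the iteration count scales as $\sqrt{L/\tilde\mu}\log(\text{initial gap}/\text{target gap})$ with $L = O(\zeta)$. The initial gap is at most polynomial in $\zeta/\tilde\mu$ (a crude bound on $F$ at $0$ suffices), so the logarithmic factor reduces to $O(\log(\zeta/(\epsilon\tilde\mu)))$, yielding the claimed complexity.

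For correctness, I would first invoke strong convexity of $F$ to convert the function-value guarantee into $\norm{\bar x - x^*} \leq \epsilon\tilde\mu/(10\zeta)$. To bound $\norm{x^*}$, I would note that $\mu(\gamma)=\lambda_{\min}(A(\gamma))$ is concave, so $\mu(\gamma^*) \geq \min_i \mu(\gamma^{(i)}) \geq \tilde\mu$ on the interval $[\gamma^{(1)},\gamma^{(2)}]$; together with $\norm{b(\gamma^*)} \leq 2\zeta$ this gives $\norm{x^*} = \norm{A(\gamma^*)^{-1}b(\gamma^*)} \leq 2\zeta/\tilde\mu$. Applying \cref{lem:quadratic_error} in the manner of the proof of \cref{lem:approx_nu_almost_zero} would then yield
\[
|q_i(\bar x) - q_i(x^*)| \;\leq\; 5\cdot\tfrac{\epsilon\tilde\mu}{10\zeta}\cdot\tfrac{2\zeta}{\tilde\mu} \;\leq\; \epsilon
\]
for both $i\in\{0,1\}$, which together with $q_0(x^*) = \Opt$ and $q_1(x^*) = 0$ certifies $\bar x$ as an $\epsilon$-approximate optimizer of \eqref{eq:gtrs}.

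The main (and modest) obstacle is verifying that the target suboptimality $\tilde\mu(\epsilon\tilde\mu/10\zeta)^2$ has been set so that two independent sources of error---the factor $1/\tilde\mu$ from inverting strong convexity and the factor $\norm{x^*}+1 = O(\zeta/\tilde\mu)$ from the quadratic-Lipschitz step---both combine to give a final error of $\epsilon$ while preserving the $\log(1/\epsilon)$ (rather than $1/\sqrt{\epsilon}$) dependence in the runtime. This is constant-bookkeeping rather than a conceptual hurdle, given the infrastructure already assembled in \cref{sec:regularity} and \cref{app:useful_procedures}.
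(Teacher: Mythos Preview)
Your proposal is correct and follows essentially the same approach as the paper's proof: invoke \cref{lem:gtrs_stability} to identify the minimizer $x^*=x(\gamma^*)$, use strong convexity to convert the suboptimality guarantee on $\bar x$ into the distance bound $\norm{\bar x - x^*}\leq \epsilon\tilde\mu/(10\zeta)$, bound $\norm{x^*}\leq 2\zeta/\tilde\mu$, and then apply \cref{lem:quadratic_error} to certify $\epsilon$-approximate optimality; the running time is read off from \cref{lem:acc_minimax}. The only cosmetic difference is that you invoke strong convexity of the minimax objective $F$ itself (with parameter $\tilde\mu$), whereas the paper instead uses the lower bound $q(\gamma^*,x)\leq F(x)$ and applies $\mu^*$-strong convexity of $q(\gamma^*,\cdot)$ together with $\tilde\mu\leq\mu^*$; your explicit appeal to concavity of $\mu(\gamma)$ to get $\mu(\gamma^*)\geq\tilde\mu$ makes transparent a step the paper leaves implicit.
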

\begin{proof}
	For notational simplicity, let $q_{\max}(x) \coloneqq \max\left(q(\gamma^{(1)}, x),\, q(\gamma^{(2)}, x)\right)$.
	Let $x^* \coloneqq x(\gamma^*)$. Recall that $q_0(x^*) = \Opt$, $q_1(x^*) = 0$, and $q_{\max}(x^*) = \Opt$.
	Then, by definition of $\mu^*$ in \cref{def:regularity} and strong convexity of $q(\gamma^*,x)$, we have
	\begin{align*}
	\tilde \mu \norm{x^* - \bar x}^2&\leq \mu^* \norm{x^* - \bar x}^2\leq q(\gamma^*, \bar x) - q(\gamma^*, x^*) = q(\gamma^*, \bar x) -\Opt\\
	&\leq q_{\max}(\bar x) - \Opt \leq \tilde \mu \left(\frac{\epsilon\tilde\mu}{10\zeta}\right)^2.
	\end{align*}
	Rearranging, we may bound $\norm{x^* - \bar x} \leq \tfrac{\epsilon\tilde\mu}{10\zeta}$.
	Furthermore, $\norm{x^*} = \norm{x(\gamma^*)}= \norm{-A(\gamma^*)^{-1}b(\gamma^*)}$ so that $\norm{x^*} \leq 2\zeta/{\tilde\mu}$ holds by \cref{as:alg_gtrs}.

	Then, as $\epsilon\tilde\mu/(10\zeta) \leq 1$ and  $2\zeta/\tilde \mu \geq 1$ (by definition of $\tilde \mu$ and \cref{as:alg_gtrs}), we can apply \cref{lem:quadratic_error} to get
	\begin{gather*}
		q_0(\bar x) \leq q_0(x^*) + 5\epsilon\frac{\tilde\mu}{10\zeta}\frac{2\zeta}{\tilde\mu}  = \Opt + \epsilon\\
		q_1(\bar x) \leq q_1(x^*) + 5\epsilon\frac{\tilde\mu}{10\zeta}\frac{2\zeta}{\tilde\mu}  = \epsilon.
	\end{gather*}

	The running time follows from \cref{lem:acc_minimax}.
\end{proof}

\subsection{Putting the pieces together}
\label{subsec:pieces_together}
The following theorem states the guarantee for applying \texttt{ConstructReform} (\cref{alg:construct_reform_gtrs}) and \texttt{SolveRegular} (\cref{alg:solveRegular}). This guarantee follows as a corollary to \cref{prop:nu_hat_gamma_positive_correctness,prop:nu_hat_gamma_zero_correctness,prop:Case1_time}
\begin{theorem}
\label{thm:overall_time}
Suppose \cref{as:alg_gtrs} holds. Then with probability $1-p$, the procedure outlined above returns an $\epsilon$-approximate solution to \eqref{eq:gtrs} in time
\begin{align*}
\tilde O\left(\tfrac{N}{\sqrt{\phi}}\log\left(\tfrac{1}{\phi}\right) \log\left(\tfrac{n}{p}\right)\log\left(\tfrac{\zeta}{\epsilon\xi}\right)\right).
\end{align*}
\end{theorem}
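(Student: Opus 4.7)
The plan is a straightforward case analysis that combines \cref{thm:alg_gtrs_correctness} (correctness of \texttt{ConstructReform}) with \cref{prop:Case1_time} (correctness of \texttt{SolveRegular}); since \texttt{SolveRegular} is deterministic, the only probabilistic step is \texttt{ConstructReform}, so the failure probability of the combined procedure is at most $p$.

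First, invoke \texttt{ConstructReform}. By \cref{thm:alg_gtrs_correctness}, it succeeds with probability at least $1-p$ within the stated running time and returns a label together with additional data. If the label is \texttt{"not regular"} or \texttt{"maybe regular"}, then by definition of success the returned $x$ is already an $\epsilon$-approximate solution and we are done. Otherwise the label is \texttt{"regular"}, and the output $(\gamma^{(1)},\gamma^{(2)},\tilde\mu)$ satisfies $\gamma^*\in[\gamma^{(1)},\gamma^{(2)}]$, $\min_{i}\mu(\gamma^{(i)})\geq \tilde\mu$, and $\tilde\mu\geq \min(\mu^*,\xi)/8$; a quick inspection of \texttt{CRLeft} and \texttt{CRMid} also shows $\tilde\mu\in(0,\xi]$ (e.g., $\tilde\mu=\mu_t/4\leq \xi/8$ in \texttt{CRLeft}, and $\tilde\mu=\xi/2$ in \texttt{CRMid}). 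These are exactly the hypotheses of \cref{prop:Case1_time}, so invoking \texttt{SolveRegular} returns an $\epsilon$-approximate solution in additional time $O\bigl(N\sqrt{\zeta/\tilde\mu}\log(\zeta/(\epsilon\tilde\mu))\bigr)$.

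The only nontrivial quantitative point is showing that $\tilde\mu=\Omega(\phi)$, which ensures that the \texttt{SolveRegular} running time is absorbed by the \texttt{ConstructReform} running time. On one hand, we always have $\tilde\mu\geq \min(\mu^*,\xi)/8$ from the success criteria. On the other hand, in \texttt{CRLeft} we have $\tilde\mu=\mu_t/4$ for some $t\leq T$, and since the sequence $\mu_t=2^{-t}\xi$ is monotonically decreasing in $t$ and $T=\lceil\log_2(3200\zeta^4/(\epsilon\xi^3))\rceil$, we get $\tilde\mu\geq \mu_T/4=\Omega(\epsilon\xi^4/\zeta^4)$; the analogous bound $\tilde\mu=\xi/2=\Omega(\xi)$ is immediate for \texttt{CRMid}. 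Taking the maximum of these two lower bounds yields $\tilde\mu\geq \Omega\!\bigl(\max(\min(\mu^*,\xi),\epsilon\xi^4/\zeta^4)\bigr)=\Omega(\phi)$.

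Substituting $\tilde\mu=\Omega(\phi)$ into the \texttt{SolveRegular} bound shows that its runtime is $\tilde O\bigl(N\sqrt{\zeta/\phi}\log(\zeta/(\epsilon\phi))\bigr)$, which is dominated by the \texttt{ConstructReform} bound from \cref{thm:alg_gtrs_correctness}; the $\sqrt\zeta$ factor and any residual $\log\log$ terms are absorbed into the $\tilde O$-notation, yielding the claimed overall running time. There is no real obstacle beyond this bookkeeping: all of the analytical heavy lifting (the inexact binary search, the handling of the \texttt{"not regular"} branch via the direction $v_T$, and the strongly convex minimax reformulation) has already been done in proving the individual propositions.
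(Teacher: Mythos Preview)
Your proof is correct and matches the paper's approach exactly: the paper simply declares that the theorem ``follows as a corollary'' to \cref{prop:nu_hat_gamma_positive_correctness,prop:nu_hat_gamma_zero_correctness,prop:Case1_time}, and you have supplied the case analysis and the $\tilde\mu=\Omega(\phi)$ bookkeeping that the paper leaves implicit. One small caveat: your claim that the $\sqrt{\zeta}$ factor is ``absorbed into the $\tilde O$-notation'' is not consistent with the paper's stated convention (which hides only $\log\log$ factors)---but this discrepancy between the $\sqrt{\zeta}$ appearing in \cref{thm:alg_gtrs_correctness} and its absence in \cref{thm:overall_time} is an artifact of the paper itself rather than a gap in your reasoning.
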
 

\subsection{Revisiting \texttt{"maybe regular"} outputs}
\label{sec:maybe_regular}
We revisit \texttt{Construct\-Reform} (\cref{alg:construct_reform_gtrs}) and show that \texttt{Construct\-Reform} does not output \texttt{"maybe regular"} on a successful run as long as a coherence condition is satisfied.

The following examples shows that in the GTRS setting, $\nu(\gamma)$ may grow arbitrarily slowly near $\gamma^*$.
\begin{example}
Let $n = 2$ and $\epsilon\in(0,1/4)$ and set
\begin{align*}
A_0 = \begin{pmatrix}
	1\\
	& - 1/2
\end{pmatrix},\quad
A_1 = \begin{pmatrix}
	-1\\& 1
\end{pmatrix},\quad
b_0 = \epsilon \cdot e_1,\quad
b_1	=	0,\quad
c_0 = 0,\quad
c_1 = 16\epsilon^2.
\end{align*}
Note that $\Gamma = [1/2, 1]$ and $A(3/4) = I/4$ so that \cref{as:alg_gtrs} holds with $\xi = 1/4$ and $\zeta = 1$. Then, we have
\begin{align*}
x(\gamma) = -\frac{\epsilon}{1-\gamma}e_1,\quad
\nu(\gamma) = \epsilon^2\left(16 - \frac{1}{(1-\gamma)^2}\right),\,\forall \gamma\in(1,3).
\end{align*}
Taking $\epsilon\to 0$, we have that $\tfrac{d}{d\gamma}\nu(\gamma)$ may be arbitrarily close to zero around $\gamma^*=3/4$. We deduce that \cref{as:alg_gtrs} alone is not enough to upper bound $\tfrac{d}{d\gamma}\nu(\gamma)$ over $\inter(\Gamma)$.\ije{\hfill\proofbox}{}
\end{example}

\begin{lemma}\label{lem:coherence}
Suppose \cref{as:alg_gtrs} holds and that
\begin{align*}
\delta \coloneqq \norm{A_0A(\hat\gamma)^{-1}b_1 - A_1A(\hat\gamma)^{-1}b_0} >0.
\end{align*}
Then, $\frac{d}{d\gamma}\nu(\gamma) \leq - \delta^2\xi^2/(4\zeta^3)$ for any $\gamma\in\inter(\Gamma)$. In particular, $\abs{\nu(\gamma)}\leq \epsilon/(2\zeta)$ for an interval of length at most $4\epsilon\zeta^2/(\delta^2\xi^2)$.
\end{lemma}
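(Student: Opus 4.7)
The plan is to directly apply \cref{lem:nu_derivatives} and then bound the spectral norm of the matrix sandwiched between the two copies of $\Delta$. Specifically, \cref{lem:nu_derivatives} gives
\[
\tfrac{d}{d\gamma}\nu(\gamma) \;=\; -2\,\Delta^\top \bigl(A(\gamma)P^2 A(\gamma) P^2 A(\gamma)\bigr)^{-1}\Delta
\]
with $P = A(\hat\gamma)^{-1/2}$ and $\Delta = A_0 A(\hat\gamma)^{-1}b_1 - A_1 A(\hat\gamma)^{-1}b_0$, so $\|\Delta\| = \delta$. The first step is simply to observe that for any $M \succ 0$ and any vector $w$, one has $w^\top M^{-1} w \geq \|w\|^2/\|M\|$.

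Next, I would upper bound the spectral norm of $M \coloneqq A(\gamma)P^2 A(\gamma) P^2 A(\gamma)$ using the submultiplicativity of the operator norm together with the bounds recalled in \cref{rem:regularity}. Since $\gamma \in \inter(\Gamma) \subseteq [0,\zeta]$, we have $\|A(\gamma)\| \leq 2\zeta$, and since $A(\hat\gamma) \succeq \xi I$ by \cref{as:alg_gtrs}, we have $\|P^2\| = \|A(\hat\gamma)^{-1}\| \leq 1/\xi$. Combining these gives $\|M\| \leq (2\zeta)^3/\xi^2 = 8\zeta^3/\xi^2$. Substituting into the derivative expression yields
\[
\tfrac{d}{d\gamma}\nu(\gamma) \;\leq\; -\frac{2\delta^2}{\|M\|} \;\leq\; -\frac{\delta^2 \xi^2}{4\zeta^3},
\]
which is the first claim.

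For the second claim, since $\nu$ is differentiable on $\inter(\Gamma)$ with derivative uniformly bounded above by $-\delta^2\xi^2/(4\zeta^3) < 0$, the mean value theorem (or monotonicity combined with a linear lower bound on the decrease) implies that on any interval $[\gamma^{(1)},\gamma^{(2)}] \subseteq \inter(\Gamma)$,
\[
\nu(\gamma^{(1)}) - \nu(\gamma^{(2)}) \;\geq\; \tfrac{\delta^2\xi^2}{4\zeta^3}\,(\gamma^{(2)}-\gamma^{(1)}).
\]
Hence the preimage of $[-\epsilon/(2\zeta),\,\epsilon/(2\zeta)]$ under $\nu$ has length at most $(\epsilon/\zeta)/(\delta^2\xi^2/(4\zeta^3)) = 4\epsilon\zeta^2/(\delta^2\xi^2)$, as claimed.

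There is no real obstacle here; the content of the lemma is the explicit formula in \cref{lem:nu_derivatives}, and the rest is a one-line norm bound on $A(\gamma)P^2A(\gamma)P^2A(\gamma)$ followed by a mean-value-theorem argument. The only thing to be mildly careful about is ensuring $M$ is actually invertible (so that the inequality $w^\top M^{-1}w \geq \|w\|^2/\|M\|$ is meaningful), but this follows immediately from $\gamma \in \inter(\Gamma)$ giving $A(\gamma) \succ 0$ together with $P \succ 0$.
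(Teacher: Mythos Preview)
Your proposal is correct and follows essentially the same approach as the paper: apply \cref{lem:nu_derivatives}, bound $\|A(\gamma)P^2A(\gamma)P^2A(\gamma)\|$ by $(2\zeta)^3/\xi^2$ using $\|A(\gamma)\|\leq 2\zeta$ and $\|P^2\|\leq 1/\xi$, and then conclude. The paper phrases the matrix bound as a PSD ordering $A(\gamma)P^2A(\gamma)P^2A(\gamma)\preceq 8\zeta^3\xi^{-2}I$ rather than via submultiplicativity of the spectral norm, but for a symmetric positive-definite matrix this is the same statement; your mean-value-theorem argument for the second claim is also exactly what the paper leaves implicit.
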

\begin{proof}
For convenience, let $P\coloneqq A(\hat\gamma)^{-1/2}$ and $\Delta\coloneqq A_0P^2b_1 - A_1P^2b_0$ so that $\delta=\norm{\Delta}$.
By \cref{lem:nu_derivatives},
\begin{align*}
\frac{d}{d\gamma}\nu(\gamma) &= -2 \Delta^\top (A(\gamma)P^2 A(\gamma)P^2A(\gamma))^{-1}\Delta.
\end{align*}
\cref{as:alg_gtrs} implies $A(\hat\gamma)\succeq \xi I$, and so $P^2\preceq (1/\xi) I$. Moreover, by \cref{rem:regularity} we have $A(\gamma)\leq 2\zeta I$ $\forall \gamma \in\inter(\Gamma)$ and hence 
$A(\gamma)P^2 A(\gamma)P^2 A(\gamma) \preceq 8\zeta^3\xi^{-2} I$.
We conclude,
\begin{align*}
\frac{d}{d\gamma}\nu(\gamma) &\leq -\frac{\delta^2\xi^2}{4\zeta^3}.\qedhere
\end{align*}
\end{proof}

\begin{remark}\label{rem:no_case_2}
As in the proof of \cref{prop:nu_hat_gamma_positive_correctness}, we will assume that Line 1.(b) of \texttt{CRLeft} (\cref{alg:construct_reform_left}) succeeds in every iteration.
Suppose that \texttt{CRLeft} outputs \texttt{"maybe regular"} on iteration $t$. Recall that in this case we have $\nu(\gamma_t),\nu(\gamma')\in[\pm\epsilon/2\zeta]$ and $\mu_t \geq \mu^*/2$. By construction, $\gamma' = \gamma_t - \mu_t/4$. By \cref{lem:coherence} we deduce that the coherence parameter $\delta$ is bounded by
\begin{align*}
\delta \leq \frac{2\sqrt{2}\zeta}{\xi} \sqrt{\frac{\epsilon}{\mu^*}}.
\end{align*}
Momentarily treating $\xi,\zeta$ as constant, we deduce that \texttt{CRLeft} can only output \texttt{"maybe regular"} if the coherence parameter is sufficiently small, i.e., $\delta = O(\sqrt{\epsilon/\mu^*})$ (assuming that line 1.(b) succeeds in every iteration).\ije{\hfill\proofbox}{}
\end{remark}

\section{Numerical Experiments}
\label{sec:numerical}
In this section, we study the numerical performance of 
our approach (\cref{sec:constructing_sc_reform}) for solving the GTRS.
We compare our proposed approach with other algorithms \cite{wang2020generalized,jiang2019novel, adachi2019eigenvalue,benTal2014hidden} suggested in the literature.
In the following, we will refer to our algorithm as \texttt{WLK21} and the algorithms in \cite{wang2020generalized,jiang2019novel, adachi2019eigenvalue,benTal2014hidden} as \texttt{AN19}, \texttt{BTH14}, \texttt{JL19}, and \texttt{WK20} respectively.
Recall that \texttt{WK20} \cite{wang2020generalized} builds a convex reformulation of the GTRS (see \cref{rem:gtrs_convex_reform}) and applies Nesterov's accelerated gradient descent method.
\texttt{JL19} \cite{jiang2019novel} builds the same convex reformulation and applies a saddle-point-based first-order algorithm to solve it.
\texttt{AN19} \cite{adachi2019eigenvalue} 
computes the minimum generalized eigenvalue (and an associated eigenvector) of an indefinite $(2n+1)\times (2n+1)$ matrix pencil and recovers $\gamma^*$ and $x^*$ from these quantities.
\texttt{BTH14} \cite{benTal2014hidden} notes that the SDP relaxation of \eqref{eq:gtrs} (which is known to be exact) can be reformulated as a second-order cone program (SOCP) after computing an appropriate diagonalizing basis. The corresponding SOCP reformulation can then be solved via interior-point method solvers such as MOSEK.

In our experiments, we have implemented slight modifications to \texttt{WK20}, \texttt{WLK21}, \texttt{JL19}, and \texttt{AN19}.
First, we have replaced the eigenvalue calls within \texttt{WK20} and \texttt{WLK21} with generalized eigenvalue calls. Indeed, in both algorithms a series of eigenvalue calls are used to simulate a single generalized eigenvalue call. While the theoretical analysis using eigenvalue calls is simpler, the practical running time using generalized eigenvalue calls is faster due to the availability of efficient generalized eigenvalue solvers.
Second, in view of practical applications where $\epsilon$-feasibility may be unacceptable or undesirable, we also implement a ``rounding'' step at the ends of \texttt{WLK21}, \texttt{WK20}, and \texttt{JL19} to ensure feasibility, i.e., $q_1(\tilde x)\leq 0$.
As suggested in \cite{adachi2019eigenvalue}, \texttt{AN19} implements a Newton refinement process to ensure $q_1(\tilde x) \leq 0$.
The feasibility in \texttt{BTH14} depends on MOSEK and is often slightly violated.
Further implementation details are described in \cref{subsec:implementation}.

All experiments were performed in MATLAB R2021a and MOSEK 9.3.6 on a machine with an AMD Opteron 4184 processor and 70GB of RAM.

\subsection{Implementation}
\label{subsec:implementation}
We discuss some implementation details.

\paragraph{Eigenvalue solvers}
We replace \texttt{ApproxGammaLeft} (\cref{alg:ApproxGamma}) of \texttt{CRLeft} (\cref{alg:construct_reform_left}) using a generalized eigenvalue solver as follows.
Recall that \texttt{Approx\-Gamma\-Left} finds $\gamma_t\leq \hat \gamma$ and unit vector $v_t\in\R^n$ such that $\mu_t/2 \leq \mu(\gamma_t) \leq v_t^\top A(\gamma_t)v_t \leq \mu_t$.
We can achieve the same guarantee using a generalized eigenvalue solver: Approximate the minimum generalized eigenvalue $\lambda_t$ of $-A_1 v_t = \lambda_t (A(\hat\gamma) - \tfrac{3\mu_t}{4} I) v_t$ to some tolerance $\epsilon$ and set $\gamma_t = \hat\gamma + \tfrac{1}{\lambda_t}$. 
Then, as long as $\epsilon>0$ is small enough, we can show that $\gamma_t, v_t$ satisfy the same guarantees as \texttt{ApproxGammaLeft}.
Detailed proofs can be found in \cref{app:approxGamma}.
In our implementations, we use the generalized eigenvalue solver \texttt{eigifp}~\cite{golub2002inverse} for \texttt{WLK21}, \texttt{WK20} and \texttt{JL19}.
In contrast, as \texttt{AN19} requires the minimum eigenvalue to an \emph{indefinite} matrix pencil, we use the generalized eigenvalue solver \texttt{eigs} for \texttt{AN19}.

\paragraph{Rounding}
\label{subsubsec:rounding}
At the end of \texttt{WLK21}, \texttt{WK20} and \texttt{JL19}, we implement the following rounding procedure. 
Given the output $\bar x$ of one of these algorithms, we will construct $\tilde x \coloneqq \bar x + \delta$ where $\delta = \alpha v$. The direction $v$ is picked so that $x^\top A_1 x$ is either positive or negative depending on the sign of $q_1(\bar x)$. Then, we pick $\alpha$ by solving the quadratic equation $q_1(\bar x + \alpha v) = 0$.
For \texttt{WK20} and \texttt{JL19}, we may set $v$ to be an approximate eigenvector of $\gamma_-$ or $\gamma_+$ as we have already computed these quantities while constructing the convex reformulation.
For \texttt{WLK21}, we compute an (inaccurate) eigenvalue corresponding to either $\lambda_{\min}(A_1)$ or $\lambda_{\max}(A_1)$.

\subsection{Random instances}
We evaluate the numerical performance of the different algorithms on random instances with dimension $n$, number of nonzero entries $N\approx \bar N$, regularity $\mu^*\approx \bar\mu^*$, and $\xi = 0.1$.
Our random generation process is similar to that of~\cite{adachi2019eigenvalue} and allows us to generate instances with known optimizers.

First, sample a sparse symmetric matrix $A(\hat\gamma)$ using the MATLAB command \texttt{sprandsym(n,N/(n*n))}. This matrix is then scaled so that $0 \prec \xi I \preceq A(\hat\gamma) \preceq (1+\xi) I$. 
We generate $A_0$ using the same function call and scale it so that $\norm{A_0} \leq 1$. 
We then set $\hat\gamma \coloneqq \lambda_{\max}(A(\hat\gamma) - A_0)$ and $A_1 \coloneqq (A(\hat\gamma) - A_0) / \hat\gamma$. 
We sample $b_0$ and $b_1$ uniformly from the unit sphere.

We have the option to choose $\gamma^*$ to lie to either the left or right of $\hat\gamma$. In the former case, we set $\gamma^*\coloneqq \hat \gamma + 1 / \lambda_{\min}(-A_1, A(\hat\gamma) - \bar\mu I)$. In the latter, we set $\gamma^*\coloneqq \hat \gamma - 1 / \lambda_{\min}(A_1, A(\hat\gamma) - \bar\mu I)$.
To ensure that $\gamma^*$ is indeed the dual optimizer, we set $c_0 =0$ and $c_1$ such that $\nu(\gamma^*) = 0$.
The exact optimizer is then given by $x^* \coloneqq -A(\gamma^*)^{-1} b(\gamma^*)$. 
Finally, we normalize $b_0, b_1, c_1$ and $x^*$ to ensure \cref{as:alg_gtrs}.

To summarize, the output of this method is a random GTRS instance satisfying \cref{as:alg_gtrs} with $N\approx \bar N$, $\mu^*\approx \bar \mu^*$ and known $\Opt$ and $x^*$ (up to machine precision).

\subsection{Experimental setup}
\label{subsec:setup}
The numerical experiments were performed with $n \in \set{10^{3}, 10^{4}, 10^{5}}$, $\bar N \in \set{10 n, 100 n}$ and $\bar \mu^* \in \set{10^{-2}, 10^{-4}, 10^{-6}}$.
We generated 100 random instances for $n = 10^3$ and $10^4$ and five random instances for $n = 10^5$ due to large running times.
\texttt{BTH14} was only reported for $n = 10^3$ as for $n \geq 10^4$ it was unable to return a solution within five times the average running time of \texttt{WLK21} or \texttt{WK20}.
The dominant cost in \texttt{BTH14} for \eqref{eq:gtrs} is in computing the diagonalizing basis, which requires computing a full set of generalized eigenvalues and is unlikely to scale favorably with $n$ and $N$.
\texttt{AN19} was not reported for $n = 10^5$
because of numerical issues and large running times associated with \texttt{eigs} applied to the \emph{indefinite} generalized eigenvalue problem.

For each algorithm and each random instance, we record the error,
\begin{align*}
        \texttt{Error} &=  q_0(\tilde{x}) - \Opt,
\end{align*} 
of the output. For the three ``convex-reformulation and gradient-descent'' algorithms \texttt{WLK21}, \texttt{WK20}, and \texttt{JL19}, we additionally record the error \emph{within} the corresponding convex reformulations, i.e.,
\begin{align*}
        \texttt{ErrorCR} &= \max\left(q(\gamma^{(1)},\bar x), q(\gamma^{(2)},\bar x)\right) - \Opt,\quad \text{for } \texttt{WLK21},\quad\text{and}\\
        \texttt{ErrorCR} &= \max\left(q(\gamma_-,\bar x), q(\gamma_+,\bar x)\right) - \Opt,\quad \text{for } \texttt{WK20}\text{ and }\texttt{JL19}.
\end{align*}
See \eqref{eq:opt_equals_inf_sup} and \cref{thm:alg_gtrs_correctness} for definitions of $\gamma_-, \gamma_+$, $\gamma^{(1)}$ and $\gamma^{(2)}$.
Here, $\bar x$ is an iterate within the gradient descent method for the corresponding convex reformulation and $\tilde x$ is a ``rounded'' solution satisfying $q_1(\tilde x) \leq 0$.

\subsection{Results}
Our numerical results are illustrated in
\cref{fig:plot_1e3,fig:plot_1e4,fig:plot_1e5} which display \texttt{ErrorCR} for \texttt{WLK21}, \texttt{WK20}, and \texttt{JL19} and \texttt{Error} for \texttt{AN19} and \texttt{BTH14} over time (in seconds) for each $n\in\set{10^3, 10^4,10^5}$, respectively.
Tables containing detailed statistics are given in \cref{sec:tables}. We make a number of observations:

\begin{figure}[htbp]
        \centering
        \includegraphics[width=\textwidth]{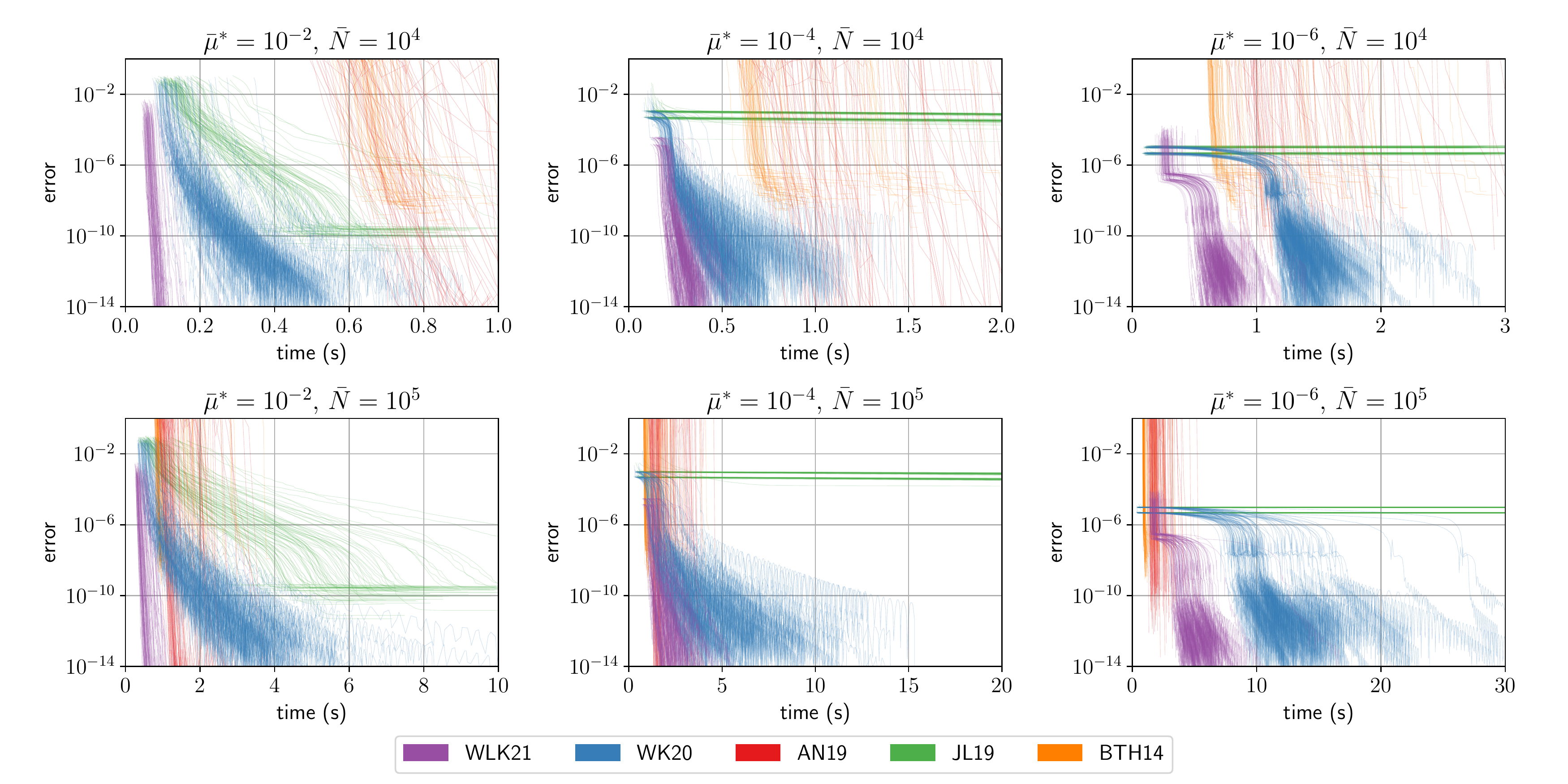}
        \caption{Comparison of algorithms for $n=10^3$.}
        \label{fig:plot_1e3}
\end{figure}

\begin{figure}[htbp]
        \centering
        \includegraphics[width=\textwidth]{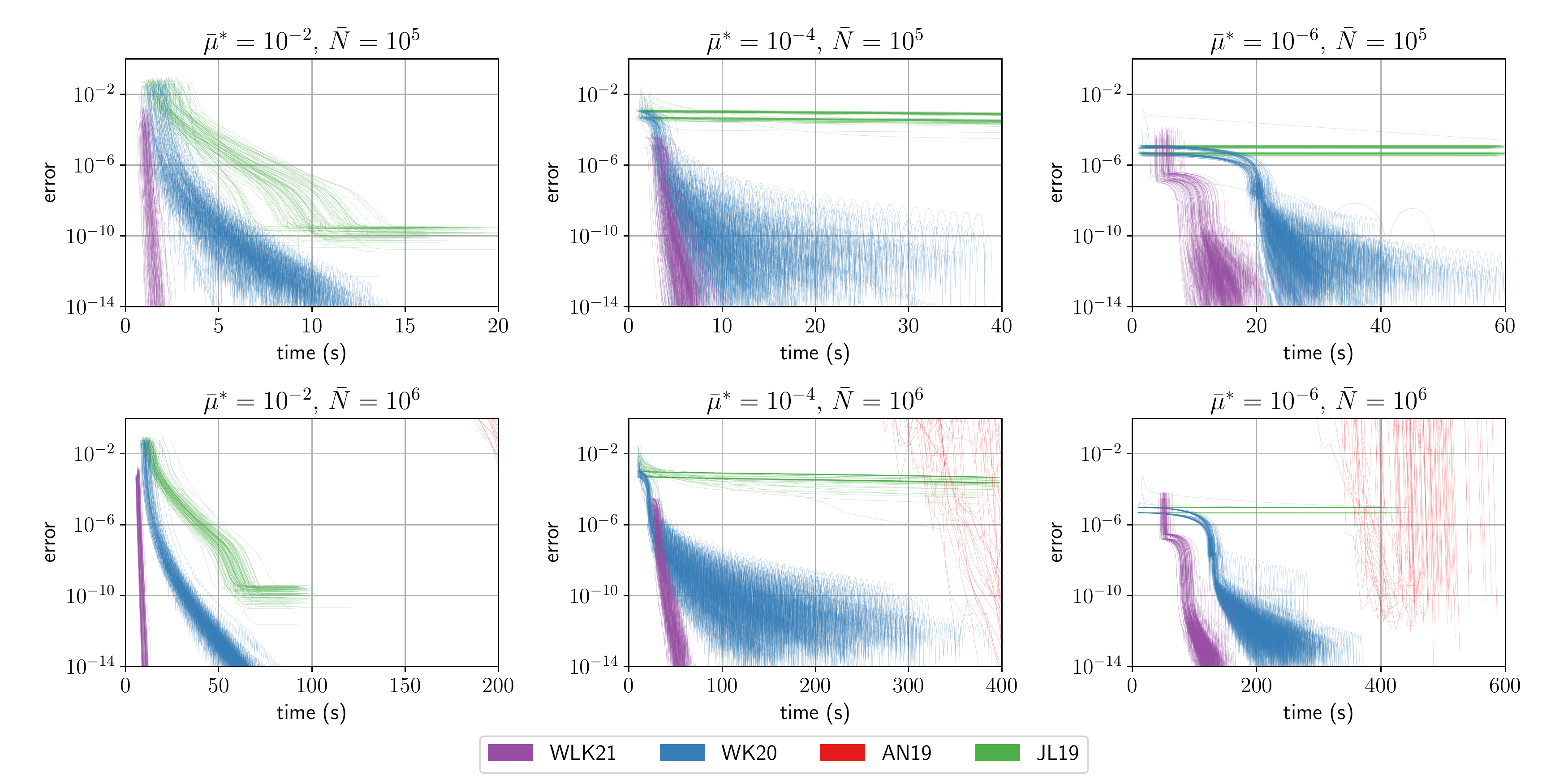}
        \caption{Comparison of algorithms for $n=10^4$.}
        \label{fig:plot_1e4}
\end{figure}

\begin{figure}[htbp]
        \centering
        \includegraphics[width=\textwidth]{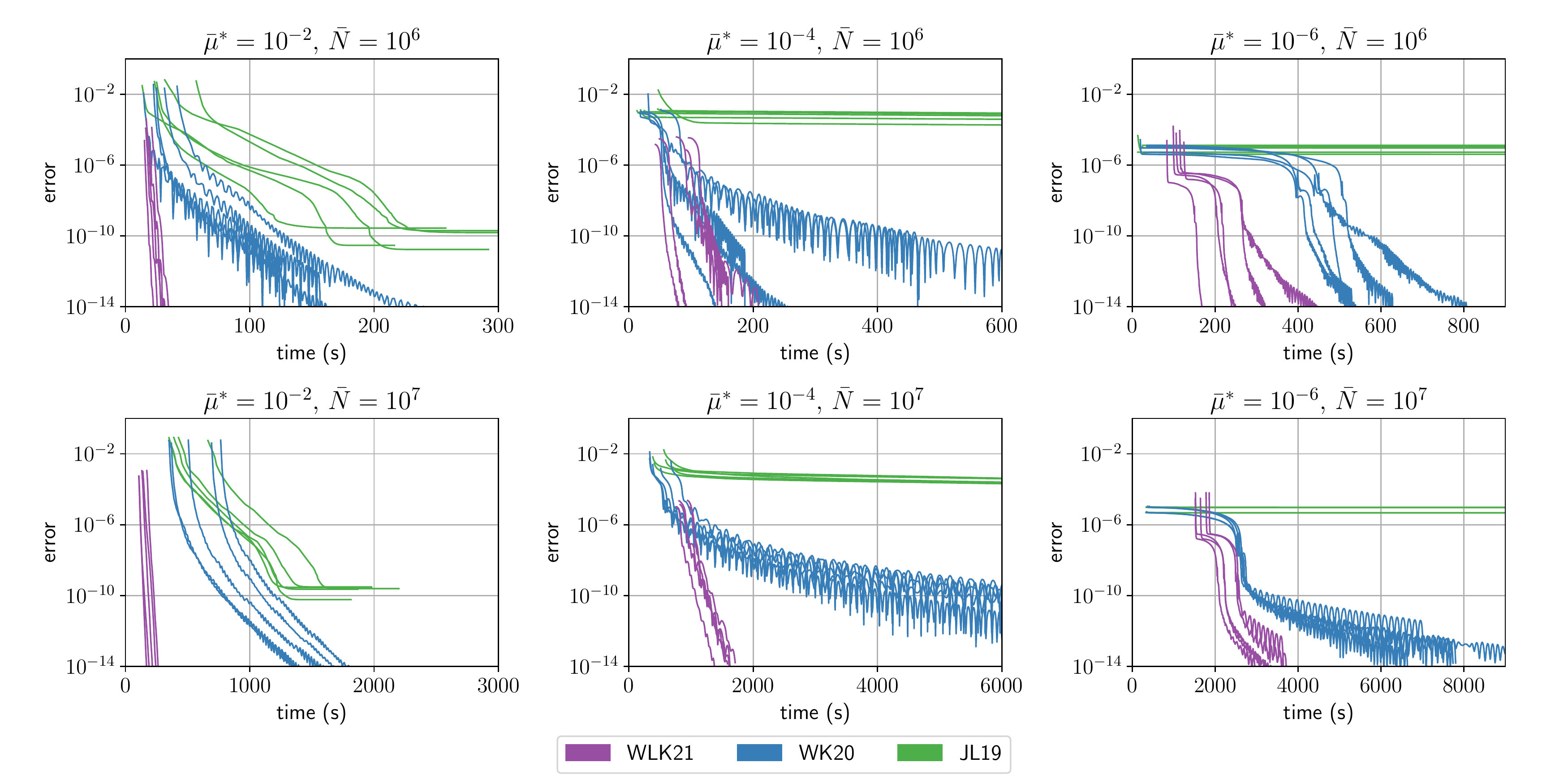}
        \caption{Comparison of algorithms for $n=10^5$.}
        \label{fig:plot_1e5}
\end{figure}

\begin{itemize}
        \item The lines plotted in \cref{fig:plot_1e3,fig:plot_1e4,fig:plot_1e5} begin after time zero. For \texttt{WLK21}, \texttt{WK20}, and \texttt{JL19} this gap corresponds to the time required to construct the corresponding convex reformulations of \eqref{eq:gtrs}. For \texttt{AN19}, this corresponds to the time required to compute $x(\hat\gamma)$ exactly, which is required to set up the appropriate $(2n+1)\times(2n+1)$ generalized eigenvalue problem~\cite{adachi2019eigenvalue}. 
For \texttt{BTH14}, this gap corresponds to the time required to compute a diagonalizing basis of \eqref{eq:gtrs}.
        \item \texttt{WLK21} constructs its reformulation faster than \texttt{WK20} and \texttt{JL19} when $\bar\mu^* = 10^{-2}$. The situation is reversed for $\bar\mu^* \in\set{10^{-4},10^{-6}}$. Nevertheless, \texttt{WLK21} outperforms both \texttt{WK20} and \texttt{JL19} due to its significantly improved performance in solving the resulting convex reformulation. See \cref{sec:tables}.

        \item As expected from \cref{thm:overall_time},
        \texttt{WLK21} exhibits a \emph{linear} convergence rate in terms of $\epsilon$. This is most apparent in the plots corresponding to $\bar\mu^* =10^{-2}$ and $\bar \mu^* = 10^{-4}$.

        \item Although the convergence guarantees established for \texttt{WK20} \cite{wang2020generalized} and \texttt{JL19} \cite{jiang2019novel} do not depend on $\mu^*$, our results show empirically that these algorithms in fact perform better when $\mu^*$ is large.
        The degree to which the running times of these algorithms vary with $\mu^*$ is less than that of \texttt{WLK21}.

        \item The convergence rates of \texttt{AN19} and \texttt{BTH14} do not vary significantly with either $N$ or $\mu^*$, but they exhibit heavy dependence on $n$. Specifically, the convergence rate of \texttt{AN19} empirically varies in $n$ as $\approx n^2$. This is consistent with the results reported in \cite{adachi2019eigenvalue}. Similarly, due to the complete eigenbasis computation embedded in \texttt{BTH14}, we expect \texttt{BTH14} to vary in $n$ as $\approx n^3$. Thus, as can be seen in \cref{fig:plot_1e3,fig:plot_1e4,fig:plot_1e5}, although \texttt{AN19} outperforms \texttt{WLK21} and \texttt{WK20} for $(n,\bar N,\bar\mu^*) = (10^3, 10^5, 10^{-6})$, \texttt{AN19} and \texttt{BTH14} become impractical for $n = 10^4$ and $n = 10^5$.

        \item The saddle-point based first-order algorithm employed in \texttt{JL19} is unable to decrease the error below $\approx 10^{-4}$ for $\bar\mu^* = 10^{-4}$ and $\bar\mu^* = 10^{-6}$.
\end{itemize}

{
\bibliographystyle{plainnat}

}

\clearpage
\begin{appendix}

\section{Useful lemmas regarding quadratic functions}
\label{app:quadratic}

The following two basic bounds will be useful in our error analysis.
\begin{lemma}
\label{lem:quadratic_error}
Let $q(x) = x^\top Ax + 2b^\top x + c$ for $A\in\S^n$, $b\in\R^n$, and $c\in\R$. Then, for all $x,y\in\R^n$,
$\abs{q(x) - q(y)} \leq \norm{A}\norm{y-x}^2 + 2\left(\norm{A}\norm{x}+\norm{b}\right)\norm{y-x}$.
In particular, if $\norm{A},\norm{b}\leq 1$,
$\norm{x}\leq \rho$ and $\norm{x-y}\leq \delta$ for some $\delta\leq 1\leq \rho$, then
$\abs{q(x) - q(y)} \leq 5\delta\rho$.
\end{lemma}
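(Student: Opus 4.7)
The plan is to reduce both bounds to a direct expansion of $q(y) - q(x)$. I would set $h := y - x$ and compute
\begin{align*}
q(y) - q(x) = (x+h)^\top A(x+h) + 2b^\top(x+h) - x^\top Ax - 2b^\top x = h^\top A h + 2h^\top(Ax + b),
\end{align*}
so all the cross terms collapse cleanly. The first claim then follows by the triangle inequality, using the operator-norm bound $|h^\top A h|\leq \|A\|\|h\|^2$ and Cauchy--Schwarz together with $\|Ax+b\|\leq \|A\|\|x\|+\|b\|$ on the linear term.

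For the second claim, I would just plug in the hypotheses $\|A\|,\|b\|\leq 1$, $\|x\|\leq \rho$, $\|h\|\leq\delta$ into the bound from the first claim to get $|q(y)-q(x)|\leq \delta^2 + 2\delta(\rho+1)$, and then absorb constants using $\delta\leq 1\leq \rho$: specifically, $\delta^2\leq \delta\leq \delta\rho$ and $2\delta(\rho+1)\leq 2\delta\rho + 2\delta\leq 4\delta\rho$, which sum to $5\delta\rho$.

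There is no real obstacle here; the lemma is a routine expansion plus a triangle inequality. The only thing to be a little careful about is making sure the constant $5$ comes out on the nose in the second part, but the arithmetic above (splitting $\delta^2 + 2\delta\rho + 2\delta$ into three pieces each bounded by $\delta\rho$, $2\delta\rho$, $2\delta\rho$ respectively using $\delta\leq 1\leq \rho$) delivers this directly.
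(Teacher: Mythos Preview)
Your proposal is correct and follows essentially the same approach as the paper: both write $y=x+h$, expand to obtain $q(y)-q(x)=h^\top A h + 2\langle Ax+b,h\rangle$, and then apply the obvious norm bounds. The paper's proof is in fact terser---it stops after the expansion and leaves the triangle-inequality step and the arithmetic for the $5\delta\rho$ bound implicit---so your write-up actually fills in a bit more detail than the original.
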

\begin{proof}
Writing $y = (y - x) + x$ and expanding the formula for $q(y)$, we obtain
\begin{align*}
q(y) &= (y-x)^\top A (y-x) + 2x^\top A (y-x) + x^\top A x + 2b^\top(y-x) + 2b^\top x + c\\
&= q(x) + \left((y-x)^\top A (y-x) + 2\ip{Ax + b, y-x}\right).\qedhere
\end{align*}
\end{proof}

\begin{lemma}\label{lem:quadraticRootsBound}
Let $\alpha,\beta,\gamma\in\R$ where $\alpha \neq 0$ and $\gamma/\alpha \leq 0$. Then the roots of $\alpha z^2 + 2\beta z + \gamma=0$ satisfy $\abs{z} \leq 2\abs{\frac{\beta}{\alpha}} + \sqrt{\frac{-\gamma}{\alpha}}$.
\end{lemma}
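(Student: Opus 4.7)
The plan is to apply the quadratic formula directly and bound the resulting expression using subadditivity of the square root. Writing the roots as
\begin{align*}
z = \frac{-\beta \pm \sqrt{\beta^2 - \alpha\gamma}}{\alpha},
\end{align*}
we immediately get $\abs{z} \leq \frac{\abs{\beta} + \sqrt{\beta^2 - \alpha\gamma}}{\abs{\alpha}}$. Note that the discriminant is nonnegative: since $\gamma/\alpha \leq 0$, we have $-\alpha\gamma = \alpha^2 \cdot (-\gamma/\alpha) \geq 0$, so $\beta^2 - \alpha\gamma = \beta^2 + (-\alpha\gamma) \geq 0$, and the roots are real.

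Next, I would apply subadditivity of the square root, $\sqrt{a+b}\leq \sqrt{a}+\sqrt{b}$ for $a,b\geq 0$, to obtain
\begin{align*}
\sqrt{\beta^2 - \alpha\gamma} \leq \abs{\beta} + \sqrt{-\alpha\gamma}.
\end{align*}
Plugging back in yields
\begin{align*}
\abs{z} \leq \frac{2\abs{\beta} + \sqrt{-\alpha\gamma}}{\abs{\alpha}} = 2\abs{\frac{\beta}{\alpha}} + \sqrt{\frac{-\gamma}{\alpha}},
\end{align*}
where the last step uses $\sqrt{-\alpha\gamma}/\abs{\alpha} = \sqrt{-\alpha\gamma/\alpha^2} = \sqrt{-\gamma/\alpha}$, which is well-defined by the hypothesis $\gamma/\alpha \leq 0$.

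There is no real obstacle here; the only subtlety is checking that the discriminant and the quantity under the outer square root are nonnegative, which is precisely what the assumption $\gamma/\alpha\leq 0$ guarantees.
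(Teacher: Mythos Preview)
Your proof is correct and follows essentially the same approach as the paper: apply the quadratic formula, bound the square root of the discriminant via subadditivity $\sqrt{a+b}\leq\sqrt{a}+\sqrt{b}$, and finish with the triangle inequality. The only cosmetic difference is that the paper divides through by $\alpha$ first and works with $\beta/\alpha$ and $\gamma/\alpha$ throughout, whereas you keep $\alpha$ in the denominator until the final simplification.
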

\begin{proof}
Let $\set{z_-,z_+}$ denote the roots (possibly with multiplicity). We bound
\begin{align*}
\set{z_-,\, z_+} &= \set{-\tfrac{\beta}{\alpha} \pm \sqrt{\left(\tfrac{\beta}{\alpha}\right)^2 - \tfrac{\gamma}{\alpha}} }\\
&\subseteq \left[-\tfrac{\beta}{\alpha} - \left(\abs{\tfrac{\beta}{\alpha}} + \sqrt{\tfrac{-\gamma}{\alpha}}\right),\, -\tfrac{\beta}{\alpha} + \left(\abs{\tfrac{\beta}{\alpha}} + \sqrt{\tfrac{-\gamma}{\alpha}}\right)\right] \\
& \subseteq \left[-\left(2\abs{\tfrac{\beta}{\alpha}} + \sqrt{\tfrac{-\gamma}{\alpha}}\right),\,  \left(2\abs{\tfrac{\beta}{\alpha}} + \sqrt{\tfrac{-\gamma}{\alpha}}\right)\right].\qedhere
\end{align*}
\end{proof}

\section{Useful procedures}
\label{app:useful_procedures}

This appendix contains running time guarantees for well-known algorithms that we will utilize as building blocks in \cref{alg:construct_reform_gtrs}.

\subsection{The Lanczos method}
The following lemma characterizes the running time for approximating the minimum eigenvalue of a symmetric matrix.
\begin{lemma}[\cite{KuczynskiWozniakowski1992estimating}]
	\label{lemma:ApproxEig}
	There exists an algorithm, $\textup{ApproxEig}(A, \rho, \delta, p)$, 
	which given a symmetric matrix $A\in\S^n$, 
	$\rho$ such that $\norm{A}_2\leq\rho$, and parameters $\delta, p>0$, 
	will, with probability at least $1-p$, return a unit vector 
	$x\in\R^n$ such that $x^\top Ax\leq \lambda_{\min}(A)+\delta$. 
	This algorithm runs in time
	\begin{align*}
		O\left( \tfrac{N\sqrt{\rho}}{\sqrt{\delta}} \log\left(\tfrac{n}{p}\right) \right),
	\end{align*}
	where $N$ is the number of nonzero entries in $A$.
\end{lemma}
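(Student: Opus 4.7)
The plan is to invoke the classical convergence analysis of the Lanczos method with a random starting vector. Since we want the minimum eigenvalue of $A$, I would first reduce to estimating the \emph{maximum} eigenvalue of $B \coloneqq \rho I - A$: note $B \succeq 0$ with $\lambda_{\max}(B) = \rho - \lambda_{\min}(A)$, so any unit vector $x$ satisfying $x^\top B x \geq \lambda_{\max}(B) - \delta$ yields $x^\top A x \leq \lambda_{\min}(A) + \delta$. Moreover $\norm{B} \leq 2\rho$, so the spectrum of $B$ lies in $[0, 2\rho]$, and sparse matrix-vector products with $B$ cost $O(N)$ (using $N \geq n$).

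I would then run $k$ iterations of the Lanczos process on $B$ starting from a uniformly random unit vector $v_0 \in \R^n$, producing an orthonormal basis of the Krylov subspace $\mathcal{K}_k(B, v_0) = \spann\set{v_0, B v_0, \dots, B^{k-1} v_0}$ and returning the maximal Ritz value together with its Ritz vector. By the Courant--Fischer characterization, this Ritz value equals the maximum of the Rayleigh quotient $\frac{(p(B) v_0)^\top B (p(B) v_0)}{\norm{p(B) v_0}^2}$ over nonzero polynomials $p$ of degree at most $k-1$. Substituting a shifted-and-scaled Chebyshev polynomial of the first kind whose extremum is placed at $\lambda_{\max}(B)$ yields a bound of the form
\begin{align*}
\lambda_{\max}(B) - \lambda_{\max}^{\mathrm{Ritz}} \leq \frac{C \rho}{\langle v_0, u_1\rangle^2 \, T_{k-1}(1 + c \delta/\rho)^2},
\end{align*}
where $u_1$ is the top eigenvector of $B$ and $T_{k-1}$ is the degree-$(k-1)$ Chebyshev polynomial. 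The standard estimate $T_{k-1}(1 + \eta) \geq \tfrac{1}{2} e^{(k-1)\sqrt{2\eta}}$ converts the denominator into an exponentially decaying factor in $k\sqrt{\delta/\rho}$.

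The probabilistic part controls the initial alignment $\langle v_0, u_1\rangle$: for a uniformly random unit vector in $\R^n$, the quantity $\langle v_0, u_1\rangle^2$ is Beta-distributed, and a direct density estimate gives $\mathrm{Pr}\bigl[|\langle v_0, u_1\rangle| \leq t\bigr] = O(t\sqrt{n})$. Taking $t \asymp p/\sqrt{n}$ then ensures $|\langle v_0, u_1\rangle| \gtrsim p/\sqrt{n}$ with probability at least $1-p$. Plugging this into the Chebyshev bound and asking the right-hand side to drop below $\delta$ yields the iteration count $k = O\bigl(\sqrt{\rho/\delta}\,\log(n/p)\bigr)$.

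Finally, each Lanczos iteration costs $O(N)$ arithmetic: one sparse product with $B$ (equivalently one product with $A$ plus a trivial $\rho$-scaling) together with $O(n)$ work for the three-term recurrence coefficients. Multiplying the per-iteration cost by $k$ delivers the claimed runtime $O\bigl(N\sqrt{\rho/\delta}\,\log(n/p)\bigr)$. The main obstacle is the two-pronged convergence analysis: the sharp Chebyshev polynomial comparison is what gives the $\sqrt{\rho/\delta}$ rate (in contrast to the $\rho/\delta$ rate of naive power iteration), and the Gaussian anti-concentration on the starting vector is what allows the $\log(n/p)$ dependence on the dimension and failure probability; neither piece is difficult in isolation, but both are required to obtain the stated bound.
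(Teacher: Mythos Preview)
The paper does not supply its own proof of this lemma: it is stated in the appendix as a black-box citation to Kuczy\'nski and Wo\'zniakowski~\cite{KuczynskiWozniakowski1992estimating}, with no argument given. So there is nothing in the paper to compare against beyond the citation itself.

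Your sketch is the standard Lanczos-with-random-start analysis and is essentially what the cited reference does. One minor caveat: as written, your bound hinges on the alignment $\langle v_0, u_1\rangle$ with a \emph{single} top eigenvector $u_1$, which is problematic when $\lambda_{\max}(B)$ is degenerate or nearly so (the top eigenvector is then not unique, and the alignment with any particular choice can be arbitrarily small). The gap-free analysis in~\cite{KuczynskiWozniakowski1992estimating} handles this by working with the projection of $v_0$ onto the entire eigenspace corresponding to eigenvalues in $[\lambda_{\max}(B)-\delta,\lambda_{\max}(B)]$, or equivalently by choosing the Chebyshev polynomial to suppress only the eigenvalues below $\lambda_{\max}(B)-\delta$ and then bounding the probability that $v_0$ has small mass on the complementary near-top subspace. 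This is a routine fix, but it is what makes the stated bound hold without any spectral-gap assumption, so it is worth flagging in your write-up.
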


\subsection{ApproxGamma}
The following algorithm extends \cite[Algorithm 2]{wang2020generalized} to find a $\gamma\leq \hat\gamma$ such that $\mu(\gamma)$ falls in a prescribed range. An analogous algorithm can be used to find a $\gamma\geq\hat\gamma$ such that $\mu(\gamma)$ falls in a prescribed range.

\begin{algorithm}
	\caption{\texttt{ApproxGammaLeft}}
	\label{alg:ApproxGamma}
	Given $(A_0,A_1)$, $(\xi,\zeta,\hat\gamma)$,
	$p\in(0,1)$, and $\mu\in(0,\xi)$
	{\small\begin{enumerate}[topsep=0pt,itemsep=0pt,parsep=0pt]
		\item Set $\ell_1 = 0$, $r_1 = \hat\gamma$
		\item For $t=1,\dots,T = \ceil{\log\left(\tfrac{5\zeta}{\mu}\right)}$
		\begin{enumerate}
			\item $\gamma_t = (\ell_t+r_t)/2$
			\item Let $x_t = ApproxEig\left(A(\gamma_t), 2\zeta, \mu /8, p/T\right)$ and $\hat\mu_t = x_t^\top A(\gamma_t)x_t$
			\item If $\hat\mu_t > \mu$, set $\ell_{t+1} = \ell_t,~ r_{t+1} = \gamma_t$
			\item Else if $\hat\mu_t < \tfrac{5}{8}\mu$, set $\ell_{t+1} = \gamma_t,~ r_{t+1} = r_t$
			\item Else, output $\gamma_t$, $x_t$
		\end{enumerate}
	\end{enumerate}}
\end{algorithm}
\lemapproxgamma*
\begin{proof}
We condition on ApproxEig succeeding in each call. This happens with probability at least $1-p$.

Suppose \texttt{ApproxGammaLeft} outputs on iteration $t$. On this iteration, we have $\mu(\gamma_t) \geq \hat\mu_t - \mu/8 \geq \mu/2$.
Similarly note $x^\top A(\gamma_t)x = \hat\mu_t \leq \mu$.

Next, we show that \texttt{ApproxGammaLeft} is guaranteed to output within $T$ iterations. Suppose otherwise and consider the interval
\begin{align*}
\cI \coloneqq \set{\gamma\in\R_+:\, \begin{array}
	{l}
	\gamma\leq \hat\gamma\\
	\mu(\gamma) \in \left[\tfrac{5}{8}\mu, \tfrac{7}{8}\mu\right]
\end{array}}.
\end{align*}
Note that if $\gamma_t \in\cI$ for some $t$ then \texttt{ApproxGammaLeft} will output at step $t$. Indeed, at iteration $t$, we will have $\hat\mu_t \in \left[\mu(\gamma_t), \mu(\gamma_t) + \tfrac{\mu}{8}\right] \subseteq \left[\tfrac{5}{8}\mu, \mu\right]$. In particular, we deduce that $\gamma_t\notin \cI$ for any $t = 1,\dots, T$. Next, by construction, the interval $[\ell_t, r_t]$ contains $\cI$ for every $t$. On the other hand, $\abs{[\ell_T, r_T]} \leq 2^{-T}\zeta<\tfrac{\mu}{4}\leq \abs{\cI}$, a contradiction.

It remains to bound the running time of \texttt{ApproxGammaLeft}.
By \cref{lemma:ApproxEig}, each iteration of step 2.(b) runs in time
\begin{align*}
\tilde O\left(\tfrac{N\sqrt{\zeta}}{\sqrt{\mu}}\log\left(\tfrac{n}{p}\right)\right).
\end{align*}
Finally, note that the number of iterations of step 2 is bounded by $T = O\left(\log\left(\tfrac{\zeta}{\mu}\right)\right)$.
\end{proof}

\subsection{Conjugate gradient}
The following lemma characterizes the running time for approximately minimizing a strongly convex quadratic function using the conjugate gradient algorithm.
\begin{lemma}
	\label{lem:conjugate_grad}
	There exists an algorithm, $\textup{ConjGrad}(A, b, \rho,\mu,\delta)$, which given symmetric matrix $A\in\S^n$ with $\mu I \preceq A\preceq \rho I$ and $b\in\R^n$, returns $x\in\R^n$ such that $\norm{x + A^{-1}b} \leq \delta$. This algorithm runs in time
	\begin{align*}
	O\left(\tfrac{N\sqrt{\rho}}{\sqrt{\mu}}\log\left(\tfrac{\norm{b}}{\mu\delta}\right)\right).
	\end{align*}
\end{lemma}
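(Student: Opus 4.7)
The plan is to invoke the classical convergence guarantee for the conjugate gradient method applied to the strongly convex quadratic $f(x) = \tfrac{1}{2} x^\top A x + b^\top x$, whose unique minimizer is $x^* = -A^{-1} b$, and then carefully convert from the $A$-norm to the Euclidean norm.

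First, I would initialize CG at $x_0 = 0$ and recall the textbook convergence bound (see, e.g., Nocedal--Wright or Trefethen--Bau): after $k$ iterations,
\begin{align*}
\norm{x_k - x^*}_A \leq 2 \left(\frac{\sqrt{\kappa} - 1}{\sqrt{\kappa} + 1}\right)^k \norm{x_0 - x^*}_A,
\end{align*}
where $\kappa \coloneqq \rho/\mu$ is the condition number of $A$ and $\norm{y}_A^2 \coloneqq y^\top A y$. Using the sandwich $\mu I \preceq A \preceq \rho I$, I obtain $\sqrt{\mu}\norm{x_k - x^*} \leq \norm{x_k - x^*}_A$ and $\norm{x_0 - x^*}_A \leq \sqrt{\rho}\norm{x^*} = \sqrt{\rho}\norm{A^{-1}b} \leq \sqrt{\rho}\norm{b}/\mu$. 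Combining these,
\begin{align*}
\norm{x_k - x^*} \leq \frac{2\sqrt{\kappa}\,\norm{b}}{\mu}\left(\frac{\sqrt{\kappa} - 1}{\sqrt{\kappa} + 1}\right)^k.
\end{align*}

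Next, I would solve for the number of iterations needed to drive the right-hand side below $\delta$. Using the bound $\log\bigl(\tfrac{\sqrt{\kappa}+1}{\sqrt{\kappa}-1}\bigr) \geq \tfrac{2}{\sqrt{\kappa}}$ for $\kappa > 1$, it suffices to take
\begin{align*}
k = O\!\left(\sqrt{\kappa}\,\log\!\left(\tfrac{\sqrt{\kappa}\,\norm{b}}{\mu\delta}\right)\right) = O\!\left(\sqrt{\tfrac{\rho}{\mu}}\,\log\!\left(\tfrac{\norm{b}}{\mu\delta}\right)\right),
\end{align*}
where in the last step the $\log\sqrt{\kappa}$ term is absorbed into the stated logarithm (using $\kappa \leq \rho/\mu \leq 1/(\mu\delta)^2$ when $\delta$ is sufficiently small; otherwise the bound is trivial).

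Finally, each CG iteration consists of one matrix-vector product with $A$, a constant number of inner products, and vector updates, all of which cost $O(N)$ time when $A$ has $N$ nonzero entries (and $N \geq n$). Multiplying the per-iteration cost by the iteration count yields the claimed overall running time. The only mild obstacle is the bookkeeping to fold the $\sqrt{\kappa}$ factor inside the logarithm into the stated $\log(\norm{b}/(\mu\delta))$, which is handled by the observation that $\log \sqrt{\kappa}$ is dominated by $\log(1/(\mu\delta))$ up to constants (or is negligible when $\kappa$ is small).
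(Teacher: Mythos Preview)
Your derivation is the standard textbook argument and is essentially correct. Note that the paper does not actually prove this lemma: it is stated in the appendix as a well-known fact about the conjugate gradient method, with no proof given. So there is nothing to compare against beyond observing that your argument is the expected one.

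One minor point: your justification for absorbing the $\log\sqrt{\kappa}$ term into $\log(\norm{b}/(\mu\delta))$ is a bit loose as written. The claim ``$\kappa \leq 1/(\mu\delta)^2$ when $\delta$ is sufficiently small'' is not a statement that holds uniformly, since $\kappa = \rho/\mu$ is fixed by the input and $\delta$ is a separate parameter. Strictly speaking, the iteration bound you derive is $O\bigl(\sqrt{\kappa}\log(\sqrt{\kappa}\,\norm{b}/(\mu\delta))\bigr)$, which contains an extra $\log(\rho/\mu)$ compared to the stated lemma. In the paper's actual uses of this lemma (inside \texttt{ApproxNu} and elsewhere), one always has $\rho = O(\zeta)$ and the logarithm already contains $\zeta/\mu$, so the extra term is harmless. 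But if you want the lemma to stand on its own, the cleanest fix is simply to state the running time as $O\bigl(\tfrac{N\sqrt{\rho}}{\sqrt{\mu}}\log(\tfrac{\rho\norm{b}}{\mu^2\delta})\bigr)$ or to add the mild assumption $\rho \leq \poly(\norm{b}/(\mu\delta))$.
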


\subsection{ApproxNu}
The following algorithm uses the conjugate gradient algorithm to approximate $\nu(\gamma)$ for a given value of $\gamma$. 

\begin{algorithm}
	\caption{\texttt{ApproxNu}}
	\label{alg:approxnu}
	Given $(A_0,A_1,b_0,c_0,c_1)$, $(\xi,\zeta,\hat\gamma)$ satisfying \cref{as:alg_gtrs}, $\gamma,\mu$ such that $\mu\in(0,1)$ and $A(\gamma)\succeq \mu I$, and $\delta>0$
	{\small
	\begin{itemize}
		\item Apply the conjugate gradient method to find $\tilde x$ such that $\norm{\tilde x - x(\gamma)} \leq \tfrac{\mu\delta}{10\zeta}$
		\item Return $\tilde x$, $q_1(\tilde x)$
	\end{itemize}
	}
\end{algorithm}

\lemapproxnu*
\begin{proof}
The running time follows from \cref{lem:conjugate_grad}. Note that \cref{as:alg_gtrs} and $A(\gamma)\succeq \mu I$ together imply $\norm{x(\gamma)}\leq \tfrac{2\zeta}{\mu}$. Then, from the definition of $\nu(\gamma)$ and $x(\gamma)$ and applying \cref{lem:quadratic_error}, we arrive at
\begin{align*}
\abs{q_1(\hat x) - \nu(\gamma)} &\leq 5\left(\tfrac{2\zeta}{\mu}\right)\left(\tfrac{\mu\delta}{10\zeta}\right)\leq \delta.\qedhere
\end{align*}
\end{proof}

\subsection{Nesterov's accelerated minimax scheme}
The following lemma characterizes the running time for finding an approximate optimizer of the maximum of two strongly convex smooth quadratic functions.
\begin{lemma}
\label{lem:acc_minimax}
There exists an algorithm, $\textup{AccMinimax}$, which given $A^{(1)},A^{(2)}\in\S^n$, $b^{(1)}, b^{(2)}\in\R^n$, $c^{(1)}, c^{(2)}\in\R$, and $(\mu,\rho,\delta)>0$
satisfying $\mu I \preceq A^{(i)}\preceq \rho I$ and
$\norm{b^{(i)}}\leq \rho$, will return $\bar x$ such that
\begin{align*}
\max_i\, \bar x^\top A^{(i)} \bar x + 2 b^{(i)\top}\bar x + c_i \leq \left(\min_{x\in\R^n}\max_i\, x^\top A^{(i)} x + 2 b^{(i)\top}x + c_i\right) + \delta,
\end{align*}
in time
\begin{align*}
O\left(\tfrac{N\sqrt{\rho}}{\sqrt{\mu}}\log\left(\tfrac{\rho}{\delta\mu}\right)\right).
\end{align*}
\end{lemma}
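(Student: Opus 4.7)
}
The plan is to invoke Nesterov's accelerated smoothing scheme for saddle-point problems with a smooth strongly-convex primal, exactly as presented in \cite[Sec.~2.3.3]{nesterov2018lectures}. First I would rewrite the objective in the bilinear saddle-point form
\begin{align*}
F(x) \;=\; \max_{i\in\{1,2\}}\Bigl(x^\top A^{(i)}x + 2b^{(i)\top}x + c^{(i)}\Bigr)
\;=\; \max_{\lambda\in\Delta_2}\sum_{i=1}^{2}\lambda_i\Bigl(x^\top A^{(i)}x + 2b^{(i)\top}x + c^{(i)}\Bigr),
\end{align*}
where $\Delta_2$ is the 2-simplex. For each fixed $\lambda\in\Delta_2$ the inner function is $\mu$-strongly convex and $\rho$-smooth in $x$, since $\mu I \preceq \sum_i \lambda_i A^{(i)} \preceq \rho I$ by the hypothesis $\mu I\preceq A^{(i)}\preceq \rho I$ and convexity of the spectrum constraints. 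The dual variable $\lambda$ lives in a compact simplex, so Nesterov's smoothing technique applies: one regularizes the max with a strongly convex prox on $\Delta_2$ to obtain a primal-only $\tilde F_\eta(x)$ that is $\mu$-strongly convex and $O(\rho + \rho^2/\eta)$-smooth, and then runs accelerated gradient descent on $\tilde F_\eta$.

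Next I would carry out the standard accuracy/iteration bookkeeping. Choosing the smoothing parameter $\eta=\Theta(\delta)$ so that $\sup_x |F(x)-\tilde F_\eta(x)| \leq \delta/2$, Nesterov's accelerated method on $\tilde F_\eta$ achieves $\tilde F_\eta(\bar x)-\tilde F_\eta^* \leq \delta/2$ in
\begin{align*}
k \;=\; O\!\left(\sqrt{\tfrac{L_\eta}{\mu}}\,\log\!\Bigl(\tfrac{\mu\,\|x_0-x^*\|^2}{\delta}\Bigr)\right)
\end{align*}
iterations, where $L_\eta = O(\rho/\mu)$ after absorbing the smoothing contribution into the condition number. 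Each iteration evaluates $\nabla f_i(x) = 2A^{(i)}x + 2b^{(i)}$, computable in $O(N)$ arithmetic operations via sparse matrix-vector products, so the total cost is $O\bigl(N\sqrt{\rho/\mu}\cdot\log(\cdot)\bigr)$.

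To close the $\log$-factor into the advertised form $\log(\rho/(\delta\mu))$, I would bound the initial suboptimality starting from $x_0=0$. The optimality condition for $F$ at $x^*$ yields $x^* = -\bigl(\sum_i \lambda^*_i A^{(i)}\bigr)^{-1}\sum_i \lambda^*_i b^{(i)}$ for some $\lambda^*\in\Delta_2$, so $\|x^*\|\leq \rho/\mu$ using $\mu I\preceq \sum_i\lambda^*_i A^{(i)}$ and $\|b^{(i)}\|\leq\rho$. Strong convexity gives $\mu\|x_0-x^*\|^2 \leq 2(F(x_0)-F^*) = O(\rho^3/\mu^2)$, so $\log(\mu\|x_0-x^*\|^2/\delta) = O(\log(\rho/(\mu\delta)))$, matching the stated complexity.

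The main obstacle is purely bookkeeping: choosing the smoothing parameter $\eta$, the prox function on $\Delta_2$, and the starting iterate so that the smoothing bias, the accelerated-method bias, and the initial-distance factor all collapse into the single $\log(\rho/(\delta\mu))$ factor without any hidden polynomial blow-up in $\rho,\mu^{-1}$. Since the dual domain is the two-point simplex, one can equivalently avoid smoothing altogether by using Nesterov's accelerated proximal-gradient method with the explicit prox of the pointwise max of two quadratics (a one-dimensional root-finding step), which removes the smoothing parameter selection and yields the same $O(\sqrt{\rho/\mu}\log(\rho/(\delta\mu)))$ iteration complexity directly; this is the variant I would prefer in the final write-up.
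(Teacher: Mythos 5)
Your primary route (Nesterov smoothing) has a genuine gap that breaks the claimed complexity. When you smooth $F(x)=\max_{\lambda\in\Delta_2}\sum_i\lambda_i q^{(i)}(x)$ with an $\eta$-scaled prox on the simplex, the gradient Lipschitz constant of $\tilde F_\eta$ is \emph{not} $O(\rho)$: differentiating $\nabla\tilde F_\eta(x)=\sum_i\lambda_i^*(x)\nabla q^{(i)}(x)$ produces a term controlled by $\norm{\nabla q^{(1)}(x)-\nabla q^{(2)}(x)}^2/\eta$, coming from the $\left(\norm{J_g(x)}/\eta\right)$-Lipschitz dependence of $\lambda^*(x)$ on $x$. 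With $\eta=\Theta(\delta)$ this gives $L_\eta=O\left(\rho+\rho^2(1+R)^2/\delta\right)$ on a ball of radius $R$, so the accelerated iteration count scales as $\sqrt{L_\eta/\mu}=\Omega(1/\sqrt{\delta})$ --- polynomial, not logarithmic, in $1/\delta$. Your assertion that $L_\eta=O(\rho/\mu)$ ``after absorbing the smoothing contribution into the condition number'' is precisely where the argument fails; the $1/\eta$ term cannot be absorbed, and indeed no smoothing-based treatment of the nonsmooth max can yield a linear rate. Your fallback (accelerated proximal-gradient with the prox of the max of the two quadratics) is also not established: that prox, $\min_x\max_i q^{(i)}(x)+\tfrac{1}{2t}\norm{x-y}^2$, requires, for each candidate dual value $\lambda\in[0,1]$ in the one-dimensional search, solving a linear system in $\lambda A^{(1)}+(1-\lambda)A^{(2)}+\tfrac{1}{t}I$; this is not an $O(N)$ operation, so the per-iteration cost accounting (and hence the total running time) does not go through as stated.

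The paper's proof avoids both problems by invoking Nesterov's minimax scheme for smooth strongly convex components directly \cite[Algorithm 2.3.12, Theorem 2.3.5]{nesterov2018lectures}. That scheme never smooths: it is built on the gradient mapping for $\max_i f_i$, whose auxiliary subproblem replaces each $q^{(i)}$ by its \emph{linearization} at the current point plus a proximal quadratic. The max of two linear functions plus $\tfrac{\gamma}{2}\norm{x-y}^2$ is solvable in closed form (a one-dimensional concave quadratic in the dual weight), so each iteration costs $O(N)$ --- this implementability is what \cite[Lemma 14]{wang2020generalized} supplies --- and Theorem 2.3.5 gives the linear rate $(1-\sqrt{\mu/L})$. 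The only remaining estimate is the initial gap from $x_0=0$, which the paper bounds by
\begin{align*}
f(0)-\min_x f(x)\;\leq\;\max_i\left(q^{(i)}(0)-\min_x q^{(i)}(x)\right)\;=\;\max_i\, b^{(i)\top}\left(A^{(i)}\right)^{-1}b^{(i)}\;\leq\;\tfrac{\rho^2}{\mu},
\end{align*}
yielding the advertised $\log(\rho/(\delta\mu))$ factor. Your initial-gap bookkeeping in the third paragraph is essentially this computation and is fine, but it is attached to methods that do not deliver the linear rate; the missing idea is the linearize-then-prox gradient mapping, which is what makes the subproblems cheap \emph{and} the convergence linear.
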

\begin{proof}
For notational convenience, define $q^{(i)}(x) \coloneqq x^\top A^{(i)} x + 2b^{(i)\top}x + c^{(i)}$ and $f(x)\coloneqq \max_i q^{(i)}(x)$.
We may take $x_0=0$ in \cite[Algorithm 2.3.12]{nesterov2018lectures} and bound
\begin{align*}
f(0) - \min_x f(x) &\leq f(0) - \max_i\min_x q^{(i)}(x)\\
&\leq \max_i \left(q^{(i)}(0) - \min_x q^{(i)}(x)\right)\\
&= \max_i\, b^{(i)\top} \left(A^{(i)}\right)^{-1} b^{(i)}\\
&\leq \tfrac{\rho^2}{\mu}.
\end{align*}
The running time then follows from \cite[Theorem 2.3.5]{nesterov2018lectures} and \cite[Lemma 14]{wang2020generalized}.
\end{proof}

 	\section{Deferred proofs from \cref{sec:regularity}}
\label{app:regularity}
\begin{lemma}
Suppose \cref{as:definiteness} holds. Then
\begin{align*}
\Opt = \inf_{x\in\R^n}\sup_{\gamma\in\Gamma}q(\gamma,x).
\end{align*}
\end{lemma}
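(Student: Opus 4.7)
My plan is to establish the equality by sandwiching $\inf_x \sup_{\gamma \in \Gamma} q(\gamma, x)$ between two quantities both equal to $\Opt$, following the standard Lagrangian duality template.

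The first step is the easy direction $\Opt \geq \inf_x \sup_{\gamma \in \Gamma} q(\gamma, x)$. For any feasible $x$ (i.e.\ $q_1(x) \leq 0$) and any $\gamma \in \Gamma \subseteq \R_+$, the term $\gamma q_1(x)$ is nonpositive, so $q(\gamma, x) = q_0(x) + \gamma q_1(x) \leq q_0(x)$. Taking the sup over $\gamma \in \Gamma$ followed by the inf over feasible $x$ delivers this inequality. The second step is the classical minimax inequality,
\begin{align*}
\inf_{x \in \R^n} \sup_{\gamma \in \Gamma} q(\gamma, x) \;\geq\; \sup_{\gamma \in \Gamma} \inf_{x \in \R^n} q(\gamma, x) \;=\; \sup_{\gamma \in \Gamma} \mathbf{d}(\gamma),
\end{align*}
which is immediate from swapping the order of optimization.

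The main obstacle is the remaining inequality $\sup_{\gamma \in \Gamma} \mathbf{d}(\gamma) \geq \Opt$, which is precisely the content of the S-lemma (equivalently, SDP exactness for a one-constraint QCQP). I would prove it by SDP duality: consider the SDP relaxation
\begin{align*}
\inf\left\{\tr(A_0 X) + 2 b_0^\top x + c_0 \,:\, \tr(A_1 X) + 2 b_1^\top x + c_1 \leq 0,\ \begin{pmatrix} X & x \\ x^\top & 1 \end{pmatrix} \succeq 0\right\}.
\end{align*}
Its Lagrangian dual is $\sup_{\gamma \in \R_+}\mathbf{d}(\gamma)$ (using a Schur complement to eliminate the matrix variable in the PSD constraint, which forces $\gamma \in \Gamma$). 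Under \cref{as:definiteness}, Slater holds for both the primal (take $X = \bar x \bar x^\top$, $x = \bar x$, so the PSD block is rank one and the scalar constraint is strict) and the dual (take $\gamma = \bar\gamma$ so that $A(\bar\gamma) \succ 0$), yielding strong SDP duality. To close the loop, I would argue that this SDP admits a rank-one optimizer $(X^\star, x^\star) = (x^\star (x^\star)^\top, x^\star)$, which is then feasible for the original GTRS with matching objective value. This last step follows from the standard rank-reduction argument for QCQPs with a single constraint, and shows the SDP primal equals $\Opt$; combined with strong duality, this gives $\sup_\gamma \mathbf{d}(\gamma) \geq \Opt$.

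Chaining the three inequalities yields $\Opt \geq \inf_x \sup_\gamma q(\gamma, x) \geq \sup_\gamma \mathbf{d}(\gamma) \geq \Opt$, forcing equality throughout and establishing the claim.
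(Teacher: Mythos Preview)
Your argument is essentially correct but takes a genuinely different route from the paper's own proof. The paper gives a direct, self-contained argument for the $\leq$ direction: for an arbitrary $x\in\R^n$ it splits into cases on the sign of $q_1(x)$ and, when $q_1(x)\neq 0$, uses a kernel vector $v$ of $A(\gamma_-)$ or $A(\gamma_+)$ (which exists because these matrices are singular at the boundary of $\Gamma$) to move $x$ along $v$ to a feasible point without increasing $q(\gamma_\pm,\cdot)$. This produces $\Opt \leq \sup_{\gamma\in\Gamma} q(\gamma,x)$ directly for every $x$, with no appeal to SDP theory or the S-lemma. Your approach, by contrast, routes through $\sup_\gamma \mathbf{d}(\gamma)$ and invokes SDP strong duality plus rank reduction---exactly the machinery the paper's appendix proof is meant to bypass. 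Both arguments are valid; the paper's is more elementary and self-contained, while yours is more conceptual and situates the identity within the standard S-lemma/SDP-exactness framework.

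One small technical slip: your proposed primal Slater point $X=\bar x\bar x^\top$ makes the block matrix rank one, hence only positive \emph{semi}definite, not strictly feasible. This is easily repaired (take $X=\bar x\bar x^\top+\epsilon I$), and in any case your dual Slater point $\gamma=\bar\gamma$ with $A(\bar\gamma)\succ 0$ already suffices for primal attainment and strong duality, which is all you need.
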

\begin{proof}
($\geq$) Let $x\in\R^n$ such that $q_1(x)\leq 0$. Then, as $\Gamma\subseteq\R_+$, we have $q_0(x)\geq \sup_{\gamma\in\Gamma} q(\gamma, x)$. Taking the infimum in $x$ concludes this direction.

($\leq$) Let $x\in\R^n$. We split into three cases depending on the sign of $q_1(x)$.

If $q_1(x) = 0$, then $\Opt\leq q_0(x) = \sup_{\gamma\in\Gamma} q(\gamma,x)$.

Next, suppose $q_1(x) < 0 $ so that $\sup_{\gamma\in\Gamma} q(\gamma, x) = q(\gamma_-, x)$. If $\gamma_- = 0$, then again $\Opt \leq q_0(x) = \sup_{\gamma\in\Gamma} q(\gamma,x)$. On the other hand, if $\gamma_->0$, then $A(\gamma_-)\not\succeq 0$ and there exists nonzero $v\in\ker(A(\gamma_-))$. Without loss of generality, $\ip{v,b(\gamma_-)} \leq 0$. Let $\alpha>0$ such that $q_1(x+ \alpha v) = 0$ (this exists as $v^\top A_1 v = v^\top \frac{A(\bar\gamma) - A(\gamma_-)}{\bar \gamma - \gamma_-}v > 0$). We deduce $\Opt \leq q_0(x+\alpha v) = q(\gamma_-, x+\alpha v) \leq q(\gamma_-, x) = \sup_{\gamma\in\Gamma} q(\gamma,x)$.

Finally, suppose $q_1(x)>0$. If $\Gamma$ is unbounded, then $\sup_{\gamma\in\Gamma} q(\gamma,x) = + \infty$ and $\Opt \leq \sup_{\gamma\in\Gamma} q(\gamma,x)$. Else, we have that $A(\gamma_+)\not\succeq 0$ and there exists nonzero $v\in\ker(A(\gamma_+))$. An argument identical to the one in the previous paragraph shows $\Opt \leq \sup_{\gamma\in\Gamma} q(\gamma,x)$.

Taking the infimum over all $x\in\R^n$ completes the proof.
\end{proof} 	

\section{Deferred proofs from \cref{subsec:implementation}}
\label{app:approxGamma}

In this appendix, we motivate a generalized-eigenvalue-based replacement for \texttt{ApproxGammaLeft} (\cref{alg:ApproxGamma}) of \texttt{CRLeft} (\cref{alg:construct_reform_left}).
Given $\mu \in(0,\xi)$,
our goal is to compute $\gamma\leq\hat\gamma$ and $v$ such that $\mu/2 \leq \mu(\gamma) \leq v^\top A(\gamma)v \leq \mu$. We will do so by approximating the minimum eigenvalue $\tilde \lambda$ (and a corresponding eigenvector) for
\begin{align}
\label{eq:eigifp}
-A_1 v = \lambda \left(A(\hat\gamma) - \tfrac{3\mu}{4} I\right)v
\end{align}
and setting $\tilde \gamma \coloneqq \hat\gamma + \tfrac{1}{\tilde\lambda}$.
Note that defining $\gamma \coloneqq \hat\gamma + \tfrac{1}{\lambda}$, where $\lambda$ is the true minimum eigenvalue to \eqref{eq:eigifp}, gives
\begin{align*}
\mu(\gamma) &= \lambda_{\min}\left(A(\hat \gamma)- \tfrac{3\mu }{4} I + \tfrac{1}{\lambda} A_1\right) + 3\mu/4= 3\mu/4.
\end{align*}
In the following, we abbreviate $\hat A \coloneqq A(\hat \gamma) - \tfrac{3\mu}{4}I$.
As in \cref{lem:approx_gamma}, we will assume \cref{as:alg_gtrs} throughout this appendix.
We will take $\tilde\lambda, \tilde v$ to be the output of \texttt{eigifp} on the input $(-A_1, \hat A, \delta)$ where $\delta>0$ will be fixed later.

Recall~\cite{golub2002inverse} that $\tilde\lambda,\tilde v$ satisfies
\begin{align}
\label{eq:eigifp_perturbed}
(-A_1 + B) \tilde v = \tilde\lambda (\hat A + C)\tilde v
\end{align}
for some $\norm{B} \leq \delta\norm{A_1}$ and $\norm{C}\leq \delta\norm{\hat A}$. We will assume that $\tilde\lambda$ is in fact the \emph{minimum eigenvalue} of \eqref{eq:eigifp_perturbed}.

\begin{lemma}
\label{lem:add_to_mult}
Suppose $\abs{\lambda - \tilde\lambda} \leq \mu/5\zeta^2$, then $\mu(\tilde \gamma) \geq \mu/2$.
\end{lemma}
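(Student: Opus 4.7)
The plan is to use Weyl's inequality to bound the difference $|\mu(\tilde\gamma) - \mu(\gamma)|$ in terms of $|\tilde\lambda - \lambda|$, and then use the definitions $\tilde\gamma = \hat\gamma + 1/\tilde\lambda$ and $\gamma = \hat\gamma + 1/\lambda$ (together with $\mu(\gamma) = 3\mu/4$) to deduce the claim. Concretely, since
\begin{align*}
A(\tilde\gamma) - A(\gamma) = \left(\tfrac{1}{\tilde\lambda} - \tfrac{1}{\lambda}\right) A_1,
\end{align*}
Weyl's inequality gives $|\mu(\tilde\gamma) - \mu(\gamma)| \leq |1/\tilde\lambda - 1/\lambda|\,\|A_1\| \leq |1/\tilde\lambda - 1/\lambda|$. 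Thus I would aim to show $|1/\tilde\lambda - 1/\lambda| \leq \mu/4$, which combined with $\mu(\gamma) = 3\mu/4$ gives $\mu(\tilde\gamma) \geq \mu/2$.

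The main obstacle is lower-bounding $|\lambda\tilde\lambda|$, since $|1/\tilde\lambda - 1/\lambda| = |\tilde\lambda - \lambda|/|\lambda\tilde\lambda|$. For $|\lambda|$, I would observe that $1/\lambda = \gamma - \hat\gamma$ where $\gamma, \hat\gamma \in [0, \gamma_+] \subseteq [0,\zeta]$ by \cref{as:alg_gtrs}; hence $|1/\lambda| = |\gamma - \hat\gamma| \leq \zeta$, giving $|\lambda| \geq 1/\zeta$. (One should also briefly note that $\gamma$ exists with $\gamma \leq \hat\gamma$, so $\lambda < 0$, which is consistent with $A_1$ having a positive eigenvalue: this follows from $A(\hat\gamma) \succ 0$ and $A_0$ having a negative eigenvalue, so $\hat\gamma A_1 = A(\hat\gamma) - A_0$ must have a positive eigenvalue and $\hat\gamma > 0$.) For $|\tilde\lambda|$, the triangle inequality together with the hypothesis $|\lambda - \tilde\lambda| \leq \mu/(5\zeta^2)$ and the bounds $\mu \leq \xi \leq 1 \leq \zeta$ gives
\begin{align*}
|\tilde\lambda| \geq |\lambda| - \tfrac{\mu}{5\zeta^2} \geq \tfrac{1}{\zeta} - \tfrac{1}{5\zeta} = \tfrac{4}{5\zeta}.
\end{align*}

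Combining, $|\lambda\tilde\lambda| \geq 4/(5\zeta^2)$, so
\begin{align*}
\left|\tfrac{1}{\tilde\lambda} - \tfrac{1}{\lambda}\right| = \frac{|\lambda - \tilde\lambda|}{|\lambda\tilde\lambda|} \leq \frac{\mu/(5\zeta^2)}{4/(5\zeta^2)} = \tfrac{\mu}{4}.
\end{align*}
Plugging into the Weyl bound yields $\mu(\tilde\gamma) \geq \mu(\gamma) - \mu/4 = 3\mu/4 - \mu/4 = \mu/2$, completing the proof.
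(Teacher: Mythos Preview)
Your proof is correct and follows essentially the same approach as the paper. The paper phrases the first step as ``$\mu(\cdot)$ is $1$-Lipschitz, so it suffices to show $|\tilde\gamma-\gamma|\leq \mu/4$,'' which is exactly your Weyl bound $|\mu(\tilde\gamma)-\mu(\gamma)|\leq |\tilde\gamma-\gamma|\,\|A_1\|\leq |\tilde\gamma-\gamma|$; the remaining estimates for $|\lambda|\geq 1/\zeta$ and $|\tilde\lambda|\geq |\lambda|-\mu/(5\zeta^2)$ and the final arithmetic are identical.
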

\begin{proof}
As $\mu(\tilde\gamma)$ is $1$-Lipschitz, it suffices to show that $\abs{\tilde\gamma-\gamma}\leq \mu/4$. Note that $\frac{1}{\lambda} = \gamma - \hat\gamma$ so that $\abs{\lambda}\geq 1/\zeta$. We deduce that $\abs{\tilde \lambda} \geq \abs{\lambda}- \abs{\lambda - \tilde\lambda}$. Combining,
\begin{align*}
\abs{\tilde\gamma-\gamma} &= \frac{\abs{\lambda - \tilde\lambda}}{\abs{\lambda}\abs{\tilde\lambda}} \leq \frac{\tfrac{\mu}{5\zeta^2}}{\left(\tfrac{1}{\zeta}\right)\left(\tfrac{1}{\zeta} - \tfrac{\mu}{5\zeta^2}\right)} \leq \mu/4.\qedhere
\end{align*}
\end{proof}

\begin{lemma}
\label{lem:lambda_additive_bound}
Suppose $\tilde\lambda$ is a minimum eigenvalue of \eqref{eq:eigifp_perturbed}
and $2\delta\zeta\leq \xi/8$.
Then,
\begin{align*}
\abs{\lambda -\tilde\lambda}\leq \delta\tfrac{72\zeta}{\xi^2}.
\end{align*}
\end{lemma}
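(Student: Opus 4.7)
My plan is to reduce both the original and perturbed generalized eigenvalue problems to standard symmetric eigenvalue problems by congruence, so that Weyl's perturbation inequality applies. First I would observe that \cref{as:alg_gtrs} together with $\mu<\xi$ gives $\hat A \succeq (\xi/4) I$ and $\|\hat A\| \leq 2\zeta + 1 \leq 3\zeta$, so that $\|B\| \leq \delta \|A_1\| \leq \delta$ and $\|C\| \leq \delta\|\hat A\| \leq 3\delta\zeta$. The hypothesis $2\delta\zeta\leq \xi/8$ then yields $\|C\|\leq 3\xi/16$, which ensures that $\hat A + C$ remains positive definite with $\lambda_{\min}(\hat A+C)\geq \xi/16$; in particular its inverse square root is well-defined and $O(1/\sqrt{\xi})$ in norm.

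Next, I would define
\[
M \coloneqq -\hat A^{-1/2}A_1 \hat A^{-1/2}, \qquad \tilde M \coloneqq -(\hat A + C)^{-1/2}(A_1 - B)(\hat A + C)^{-1/2}.
\]
Under the congruences $w = \hat A^{1/2}v$ and $\tilde w = (\hat A + C)^{1/2}\tilde v$, the eigenvalues of the original and perturbed pencils coincide with those of $M$ and $\tilde M$ respectively. Hence $\lambda = \lambda_{\min}(M)$ and $\tilde\lambda = \lambda_{\min}(\tilde M)$, and Weyl's inequality gives $|\lambda - \tilde\lambda| \leq \|M - \tilde M\|$.

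To bound $\|M - \tilde M\|$, I would decompose it via the triangle inequality into a contribution from $B$ (perturbation in the numerator) and a contribution from $C$ (perturbation in the denominator). The $B$-term $(\hat A+C)^{-1/2}B(\hat A+C)^{-1/2}$ is bounded directly by $\|(\hat A+C)^{-1}\|\cdot\|B\|=O(\delta/\xi)$. For the $C$-term, the key step is controlling $\|\hat A^{-1/2} - (\hat A + C)^{-1/2}\|$; I plan to use the identity $X^{-1/2} - Y^{-1/2} = Y^{-1/2}(Y^{1/2}-X^{1/2})X^{-1/2}$ together with the standard square-root perturbation inequality $\|X^{1/2} - Y^{1/2}\| \leq \|X - Y\|/(\lambda_{\min}^{1/2}(X) + \lambda_{\min}^{1/2}(Y))$, applied with $X = \hat A + C$ and $Y = \hat A$. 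This yields $\|\hat A^{-1/2} - (\hat A+C)^{-1/2}\| = O(\|C\|/\xi^{3/2}) = O(\delta\zeta/\xi^{3/2})$, and combining with $\|A_1\|\leq 1$ and $\max(\|\hat A^{-1/2}\|,\|(\hat A+C)^{-1/2}\|) = O(1/\sqrt\xi)$ gives an overall $C$-term bound of $O(\delta\zeta/\xi^2)$.

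The only real obstacle is bookkeeping: carefully tracking absolute constants so that the sum of the two contributions is at most $72\delta\zeta/\xi^2$ under the hypothesis $2\delta\zeta\leq \xi/8$ (with $\zeta\geq 1\geq \xi$ allowing the $O(\delta/\xi)$ term from $B$ to be absorbed into the dominant $O(\delta\zeta/\xi^2)$ term). All the mathematical content lies in Weyl's inequality plus the square-root perturbation bound; no new ideas are required.
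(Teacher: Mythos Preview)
Your approach is correct but takes a genuinely different route from the paper's own proof.

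The paper avoids matrix square roots entirely. It works directly with the variational characterization
\[
\lambda = \max\{\lambda : -A_1 - \lambda \hat A \succeq 0\},\qquad
\tilde\lambda = \max\{\tilde\lambda : (-A_1+B) - \tilde\lambda(\hat A+C) \succeq 0\},
\]
and shows that shifting by $\alpha = O\!\bigl(\delta(1+\zeta\max(|\lambda|,|\tilde\lambda|))/\xi\bigr)$ compensates for $B$ and $C$ in the corresponding linear matrix inequality. The constant $72$ then drops out after the elementary estimates $|\lambda|\leq 4/\xi$ and $|\tilde\lambda|\leq 17/(2\xi)$, obtained by bounding the Rayleigh quotient for each pencil. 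By contrast, you transform both pencils by congruence to ordinary symmetric matrices $M,\tilde M$, invoke Weyl's inequality, and control $\|M-\tilde M\|$ via a Sylvester-equation/integral-representation bound for $\|X^{1/2}-Y^{1/2}\|$. The paper's argument is more elementary (no auxiliary perturbation lemma for matrix square roots is required), while your route is more systematic and slots the lemma into standard matrix perturbation theory.

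One minor bookkeeping point: your initial estimates $\|\hat A\|\leq 3\zeta$ and $\lambda_{\min}(\hat A+C)\geq \xi/16$ are a bit loose and will not quite deliver the constant~$72$. Since $\hat A = A(\hat\gamma) - \tfrac{3\mu}{4}I$ is positive definite with eigenvalues below those of $A(\hat\gamma)$, one actually has $\|\hat A\|\leq 2\zeta$, hence $\|C\|\leq 2\delta\zeta\leq \xi/8$ and $\lambda_{\min}(\hat A+C)\geq \xi/8$. With these sharper values, the $C$-contribution to $\|M-\tilde M\|$ works out to exactly $64\,\delta\zeta/\xi^2$ and the $B$-contribution to at most $8\,\delta\zeta/\xi^2$ (using $\zeta\geq 1\geq\xi$), matching the paper's $72\,\delta\zeta/\xi^2$ on the nose.
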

\begin{proof}
Note that
\begin{align*}
\lambda &= \max\set{\lambda:\, -A_1 - \lambda \hat A\succeq 0},\quad\text{and}\quad
\tilde\lambda = \max\set{\tilde\lambda:\, (-A_1+B) - \tilde\lambda (\hat A+C)\succeq 0}.
\end{align*}

We compute
\begin{align*}
-A_1 - (\tilde\lambda - \alpha) \hat A &= (-A_1 + B) - \tilde \lambda (\hat A + C) - B + \tilde \lambda C +\alpha \hat A\\
&\succeq -\delta(1 + 2\zeta|\tilde\lambda|) +\alpha\hat A.
\end{align*}
We may thus deduce that $-A_1 - (\tilde\lambda - \alpha)\hat A\succeq 0$ whenever $\alpha \geq \delta\frac{4(1+2\zeta|\tilde\lambda|)}{\xi}$. Hence,
\begin{align*}
\tilde\lambda - \lambda \leq  \delta\frac{4(1+2\zeta|\tilde\lambda|)}{\xi}.
\end{align*}
Similarly,
\begin{align*}
(-A_1+B) - (\lambda - \alpha) (\hat A +C) &= -A_1 - \lambda \hat A + B - \lambda C +\alpha (\hat A+C)\\
&\succeq -\delta(1 + 2\zeta\abs{\lambda}) +\alpha(\hat A+C).
\end{align*}
We may thus deduce that $(-A_1+B) - (\lambda - \alpha) (\hat A +C)\succeq 0$ whenever $\alpha \geq \delta\frac{8(1+2\zeta\abs{\lambda})}{\xi}$. Hence,
\begin{align*}
\tilde\lambda - \lambda \geq  -\delta\frac{8(1+2\zeta\abs{\lambda})}{\xi}.
\end{align*}
Finally, we may estimate $\abs{\tfrac{1}{\lambda}}\geq \tfrac{\xi}{4}$ and $\abs{\tfrac{1}{\tilde\lambda}}\geq \tfrac{2\xi}{17}$. We conclude
\begin{align*}
-\delta\frac{8(1+8\zeta/\xi)}{\xi}&\leq \tilde \lambda - \lambda \leq \delta\frac{4(1+17\zeta/\xi)}{\xi}.\qedhere
\end{align*}
\end{proof}

\begin{proposition}
Let $\delta = \tfrac{\mu\xi^2}{360\zeta^3}$ and suppose $\tilde \lambda$ is the minimum eigenvalue of \eqref{eq:eigifp_perturbed}. Then,
\begin{align*}
\mu/2 \leq \mu(\tilde\gamma) \leq \tilde v^\top A(\tilde \gamma)\tilde v\leq \mu.
\end{align*}
\end{proposition}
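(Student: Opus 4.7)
The plan is to combine the two lemmas in this appendix with a direct computation using the perturbed generalized eigenvalue identity \eqref{eq:eigifp_perturbed}. The proof splits naturally into a lower bound on $\mu(\tilde\gamma)$ and an upper bound on $\tilde v^\top A(\tilde\gamma)\tilde v$.

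First I would verify the side condition $2\delta\zeta\leq\xi/8$ needed to invoke \cref{lem:lambda_additive_bound}; this is immediate from $\delta=\mu\xi^2/(360\zeta^3)$ together with $\mu\leq\xi\leq 1\leq\zeta$. Applying \cref{lem:lambda_additive_bound} with this $\delta$ yields $|\lambda-\tilde\lambda|\leq 72\delta\zeta/\xi^2 = \mu/(5\zeta^2)$, which is exactly the hypothesis of \cref{lem:add_to_mult}. This immediately gives $\mu(\tilde\gamma)\geq\mu/2$, settling the leftmost inequality. The middle inequality $\mu(\tilde\gamma)\leq\tilde v^\top A(\tilde\gamma)\tilde v$ is the variational characterization of the minimum eigenvalue (normalizing $\tilde v$ to be a unit vector).

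The main step, and the one I expect to be the delicate part, is the upper bound $\tilde v^\top A(\tilde\gamma)\tilde v\leq \mu$. I would start by decomposing
\begin{align*}
A(\tilde\gamma) \;=\; \hat A + \tfrac{3\mu}{4}I + \tfrac{1}{\tilde\lambda}A_1,
\end{align*}
and then eliminate $\tilde v^\top A_1\tilde v$ using \eqref{eq:eigifp_perturbed}: taking the quadratic form against the unit vector $\tilde v$ gives $\tilde v^\top A_1\tilde v = \tilde v^\top B\tilde v - \tilde\lambda(\tilde v^\top \hat A\tilde v + \tilde v^\top C\tilde v)$. Substituting and cancelling the $\tilde v^\top\hat A\tilde v$ terms produces the clean identity
\begin{align*}
\tilde v^\top A(\tilde\gamma)\tilde v \;=\; \tfrac{3\mu}{4} - \tilde v^\top C\tilde v + \tfrac{1}{\tilde\lambda}\tilde v^\top B\tilde v.
\end{align*}

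Finally I would bound the two perturbation terms. Using $\|B\|\leq\delta\|A_1\|\leq\delta$ and $\|C\|\leq\delta\|\hat A\|\leq 2\delta\zeta$ (since $\|A(\hat\gamma)\|\leq 2\zeta$ dominates the $3\mu/4$ subtraction), together with a bound $1/|\tilde\lambda|\leq 2\zeta$ obtained by combining $|1/\lambda|=\hat\gamma-\gamma\leq\zeta$ with the $|\lambda-\tilde\lambda|\leq\mu/(5\zeta^2)$ estimate from step one, one gets $|\tilde v^\top A(\tilde\gamma)\tilde v - 3\mu/4| \leq 4\delta\zeta$. Plugging $\delta=\mu\xi^2/(360\zeta^3)$ and using $\xi\leq\zeta$ gives $4\delta\zeta\leq\mu/90$, which together with $3\mu/4$ is comfortably below $\mu$. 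The only mild subtlety is the bound on $1/|\tilde\lambda|$, which needs the first-step estimate to ensure $\tilde\lambda$ has not drifted too close to zero; everything else is bookkeeping.
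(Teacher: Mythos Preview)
Your proposal is correct and follows essentially the same approach as the paper's proof: both combine \cref{lem:lambda_additive_bound} and \cref{lem:add_to_mult} for the lower bound, and both use the perturbed eigen-identity \eqref{eq:eigifp_perturbed} to reduce $\tilde v^\top A(\tilde\gamma)\tilde v$ to $\tfrac{3\mu}{4}-\tilde v^\top C\tilde v+\tfrac{1}{\tilde\lambda}\tilde v^\top B\tilde v$ and then bound the perturbation terms by $4\delta\zeta$ via $\|C\|\leq 2\delta\zeta$ and $1/|\tilde\lambda|\leq 2\zeta$. Your write-up is actually slightly more careful than the paper's in that you explicitly check the side condition $2\delta\zeta\leq\xi/8$ and the arithmetic $72\delta\zeta/\xi^2=\mu/(5\zeta^2)$.
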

\begin{proof}
The first inequality follows from \cref{lem:add_to_mult,lem:lambda_additive_bound}. The second inequality follows from the definition of $\mu$. The third inequality follows as
\begin{align*}
\tilde v^\top A(\tilde \gamma)\tilde v &= \tilde v^\top \left(\hat A + \tfrac{1}{\tilde\lambda} A_1 \right)\tilde v + 3\mu/4\\
&= \tilde v^\top \left((\hat A+C) + \tfrac{1}{\tilde\lambda} (A_1-B) - C + \tfrac{1}{\tilde\lambda}B \right)\tilde v + 3\mu/4\\
&\leq \norm{C} + \tfrac{1}{|\tilde\lambda|}\norm{B} + 3\mu/4\\
&\leq 4\delta\zeta  + 3\mu/4.
\end{align*}
Here, the first inequality holds as $(-A_1+B)\tilde v = \tilde\lambda(\hat A + C) \tilde v$. The second inequality follows as $\norm{C}\leq 2\delta\zeta$ and $\abs{\tilde\lambda} \geq \abs{\lambda} - \abs{\lambda - \tilde\lambda} \geq 1/2\zeta$.
\end{proof} 	\section{Numerical Experiment Tables}
\label{sec:tables}
We provide additional statistics for the numerical results plotted in \cref{fig:plot_1e3,fig:plot_1e4,fig:plot_1e5} for $n=10^3,\,10^4,\,10^5$, respectively. 
In \cref{table:1e3,table:1e4}, we present the averages for $n=10^3,\,10^4$ respectively over 100 random instances each, and in \cref{table:1e5} the averages for $n=10^5$ are given over 5 random instances. 
In these tables, \texttt{Error} and \texttt{ErrorCR} correspond to the error of $q_0(\tilde x)$ and the error of $\bar x $ within the convex reformulation respectively as defined in \cref{subsec:setup}.
For \texttt{WLK21}, \texttt{WK20} and \texttt{JL19}, we also report time for constructing the convex reformulation and solving the reformulation as \texttt{Ref.} and \texttt{Solve}. 
For each parameter combination, we highlight the algorithm with the smallest running time.

\begin{table}
\centering
{\ije{\footnotesize}{}
\scalebox{0.8} 
{
\begin{tabular}{ccccccc|ccccc}\toprule
& & \multicolumn{5}{c}{$\bar N = 10^4$} & \multicolumn{5}{c}{$\bar N = 10^5$} \\
\cmidrule(lr){3-7} \cmidrule(lr){8-12}
& & & & & \multicolumn{2}{c}{Time} & & & & \multicolumn{2}{c}{Time} \\
\cmidrule(lr){6-7} \cmidrule(lr){11-12}
$\bar\mu^*$   & Alg.         & \texttt{Error} & \texttt{ErrorCR} & Time      & Ref.\ & Solve & \texttt{Error} & \texttt{ErrorCR} & Time     & Ref.\  & Solve \\
\midrule
& \texttt{ WLK21 }&  4.8&  6.1& \textbf{ 0.1 }&  0.05&  0.05&  5.1&  5.4& \textbf{ 0.8 }&  0.3&  0.4 \\
& \texttt{ WK20 }&  5.7&  6.7&  0.5&  0.1&  0.3&  4.8&  5.3&  4.4&  0.5&  3.8 \\
1e-2& \texttt{ JL19 }&  1.5e+03&  1.8e+06&  0.7&  0.1&  0.6&  5.1e+01&  2.1e+06&  8.2&  0.6&  7.6 \\
& \texttt{ AN19 }&  6.7e+02& -&  1.5& -& -&  6.4e+02& -&  2.2& -& - \\
& \texttt{ BTH14 }&  4.2e+08& -&  1.1& -& -&  7.5e+08& -&  1.5& -& - \\
\midrule
& \texttt{ WLK21 }&  6.7&  7.2& \textbf{ 0.4 }&  0.2&  0.2&  8.5&  8.4&  2.9&  1.0&  1.8 \\
& \texttt{ WK20 }&  8.1&  7.1&  0.7&  0.1&  0.6&  7.0&  7.1&  7.0&  0.6&  6.4 \\
1e-4& \texttt{ JL19 }&  2.3e+09&  4.1e+12&  3.0&  0.1&  2.8&  1.0e+09&  3.6e+12&  49.9&  0.6&  49.3 \\
& \texttt{ AN19 }&  4.9& -&  1.6& -& -&  5.0& -&  2.4& -& - \\
& \texttt{ BTH14 }&  4.0e+08& -&  1.2& -& -&  4.4e+08& -& \textbf{ 1.7 }& -& - \\
\midrule
& \texttt{ WLK21 }&  6.5&  6.1& \textbf{ 0.8 }&  0.3&  0.5&  8.3&  8.2&  6.3&  1.8&  4.4 \\
& \texttt{ WK20 }&  6.4&  6.4&  1.6&  0.1&  1.5&  7.6&  8.2&  15.5&  0.5&  15.0 \\
1e-6& \texttt{ JL19 }&  7.9e+04&  7.5e+10&  3.1&  0.1&  3.0&  8.4e+04&  7.1e+10&  40.4&  0.5&  39.9 \\
& \texttt{ AN19 }&  1.4e+06& -&  1.7& -& -&  1.3e+06& -&  2.4& -& - \\
& \texttt{ BTH14 }&  1.3e+09& -&  1.4& -& -&  1.0e+09& -& \textbf{ 1.7 }& -& - \\
\bottomrule
\end{tabular}
}
 }
\caption{Average errors and solution times for $n=10^3$ over 100 random instances for each parameter combination. Note that errors are reported in units of $10^{-16}$.
We call attention to the setting $(\bar N,\bar\mu^*) = (10^5,10^{-6})$. In this setting, the fastest algorithm is \texttt{BTH14}. On the other hand, \texttt{BTH14} also reports the highest error of $\approx 10^{-7}$. \texttt{BTH14} is followed by \texttt{AN19} which achieves slightly smaller error of $\approx 10^{-10}$. While \texttt{WLK21} is slightly slower than both of these algorithms it achieves significantly smaller errors of $\approx 10^{-16}$. The results are similar for $(\bar N,\bar\mu^*) = (10^5,10^{-4})$ as well.}
\label{table:1e3}
\end{table}

\begin{table}
\centering
{\ije{\footnotesize}{}
\scalebox{0.8} 
{
\begin{tabular}{ccccccc|ccccc}\toprule
& & \multicolumn{5}{c}{$\bar N = 10^4$} & \multicolumn{5}{c}{$\bar N = 10^5$} \\
\cmidrule(lr){3-7} \cmidrule(lr){8-12}
& & & & & \multicolumn{2}{c}{Time} & & & & \multicolumn{2}{c}{Time} \\
\cmidrule(lr){6-7} \cmidrule(lr){11-12}
$\bar\mu^*$   & Alg.         & \texttt{Error} & \texttt{ErrorCR} & Time      & Ref.\ & Solve & \texttt{Error} & \texttt{ErrorCR} & Time     & Ref.\  & Solve \\
\midrule
& \texttt{ WLK21 }&  4.9&  6.4& \textbf{ 1.8 }&  0.8&  0.9&  4.7&  5.4& \textbf{ 11.1 }&  4.8&  4.8 \\
& \texttt{ WK20 }&  4.9&  5.7&  9.8&  1.6&  8.1&  5.3&  6.0&  67.5&  10.5&  56.8 \\
1e-2& \texttt{ JL19 }&  1.4e+02&  1.7e+06&  15.3&  1.6&  13.6&  6.3e+02&  1.8e+06&  93.8&  10.7&  82.8 \\
& \texttt{ AN19 }&  6.8e+02& -&  184.1& -& -&  1.2e+03& -&  324.5& -& - \\
\midrule
& \texttt{ WLK21 }&  1.5e+01&  1.6e+01& \textbf{ 6.6 }&  2.6&  3.7&  4.1e+01&  4.2e+01& \textbf{ 57.0 }&  24.0&  30.3 \\
& \texttt{ WK20 }&  1.0e+01&  1.1e+01&  16.6&  1.5&  15.1&  2.9e+01&  3.0e+01&  207.0&  11.0&  195.8 \\
1e-4& \texttt{ JL19 }&  6.7e+09&  4.2e+12&  57.9&  1.5&  56.4&  2.1e+10&  3.1e+12&  393.1&  11.3&  381.6 \\
& \texttt{ AN19 }&  4.3& -&  205.7& -& -&  4.5& -&  476.4& -& - \\
\midrule
& \texttt{ WLK21 }&  9.1e+01&  9.2e+01& \textbf{ 15.1 }&  5.1&  9.8&  2.7e+01&  2.8e+01& \textbf{ 130.7 }&  49.1&  79.0 \\
& \texttt{ WK20 }&  6.1e+01&  6.1e+01&  33.0&  1.5&  31.5&  3.1e+01&  3.1e+01&  264.0&  10.6&  253.2 \\
1e-6& \texttt{ JL19 }&  2.5e+09&  7.8e+10&  59.7&  1.5&  58.1&  1.6e+08&  7.1e+10&  402.7&  11.0&  391.4 \\
& \texttt{ AN19 }&  8.0e+06& -&  206.6& -& -&  4.4e+06& -&  475.5& -& - \\
\bottomrule
\end{tabular}
}
 }
\caption{Average errors and solution times for $n=10^4$ over 100 random instances for each parameter combination.
Note that errors are reported in units of $10^{-16}$.
}
\label{table:1e4}
\end{table}

\begin{table}
\centering
{\ije{\footnotesize}{}
\scalebox{0.8} 
{
\begin{tabular}{ccccccc|ccccc}\toprule
& & \multicolumn{5}{c}{$\bar N = 10^4$} & \multicolumn{5}{c}{$\bar N = 10^5$} \\
\cmidrule(lr){3-7} \cmidrule(lr){8-12}
& & & & & \multicolumn{2}{c}{Time} & & & & \multicolumn{2}{c}{Time} \\
\cmidrule(lr){6-7} \cmidrule(lr){11-12}
$\bar\mu^*$   & Alg.         & \texttt{Error} & \texttt{ErrorCR} & Time      & Ref.\ & Solve & \texttt{Error} & \texttt{ErrorCR} & Time     & Ref.\  & Solve \\
\midrule
& \texttt{ WLK21 }&  3.3&  9.9& \textbf{ 30.1 }&  12.6&  13.6&  5.3&  2.7& \textbf{ 229.2 }&  100.8&  101.7 \\
1e-2& \texttt{ WK20 }&  4.7&  7.8&  162.9&  24.7&  137.0&  3.1&  4.9&  1748.4&  527.9&  1216.3 \\
& \texttt{ JL19 }&  4.9&  1.4e+06&  287.3&  27.4&  259.1&  1.6e+02&  2.3e+06&  1930.7&  419.0&  1507.5 \\
\midrule
& \texttt{ WLK21 }&  1.5e+01&  1.6e+01& \textbf{ 141.6 }&  65.1&  70.8&  9.5e+01&  9.5e+01& \textbf{ 1586.3 }&  767.0&  728.5 \\
1e-4& \texttt{ WK20 }&  1.6e+01&  1.6e+01&  334.3&  25.7&  307.9&  1.4e+02&  1.4e+02&  10622.9&  437.7&  10180.8 \\
& \texttt{ JL19 }&  2.5e+09&  4.3e+12&  1044.3&  26.8&  1016.5&  9.2e+10&  8.7e+11&  11526.9&  514.5&  11007.9 \\
\midrule
& \texttt{ WLK21 }&  2.2e+01&  2.0e+01& \textbf{ 294.2 }&  97.8&  190.0&  6.2e+01&  6.4e+01& \textbf{ 3361.1 }&  1569.5&  1701.7 \\
1e-6& \texttt{ WK20 }&  1.5e+01&  1.6e+01&  612.3&  25.7&  585.6&  1.4e+02&  1.4e+02&  7781.5&  367.8&  7409.8 \\
& \texttt{ JL19 }&  7.6e+04&  8.5e+10&  1081.4&  19.5&  1061.2&  2.1e+06&  7.5e+10&  10960.0&  355.3&  10600.8 \\
\bottomrule
\end{tabular}
}
 }
\caption{Average errors and solution times for $n=10^5$ over 5 random instances for each parameter combination. Note that errors are reported in units of $10^{-16}$.
}
\label{table:1e5}
\end{table}

 \end{appendix}
\end{document}